\newcommand{\weakly}{\rightharpoonup}
\newcommand{\weaklystar}{\overset{\ast}{\weakly}}
\newcommand{\N}{\ensuremath{\mathbb{N}}}
\newcommand{\eps}{\varepsilon}
\renewcommand{\S}{\ensuremath{\mathbb{S}}}
\newcommand{\R}{\ensuremath{\mathbb{R}}}
\newcommand{\dx}{\,\mathrm{d}}
\newcommand{\tT}{\mathrm{T}}
\DeclareMathOperator{\BV}{BV}
\DeclareMathOperator{\TGV}{TGV}
\def\3{\ss}
\newcommand*\pFq[6][8]{
  \begingroup 
  \pFqmuskip=#1mu\relax
  \begingroup\lccode`\~=`\,
  \lowercase{\endgroup\let~}\pFqcomma
  {}_{#2}F_{#3}{\left(\genfrac..{0pt}{}{#4}{#5};#6\right)}%
  \endgroup
}
\newcommand*\pRegFq[6][8]{
  \begingroup 
  \pFqmuskip=#1mu\relax
  \begingroup\lccode`\~=`\,
  \lowercase{\endgroup\let~}\pFqcomma
  {}_{#2}\tilde{F}_{#3}{\left(\genfrac..{0pt}{}{#4}{#5};#6\right)}%
  \endgroup
}
\newcommand{\pFqcomma}{\mskip\pFqmuskip}
\DeclareMathOperator*{\trace}{trace}
\DeclareMathOperator*{\SO}{SO}
\DeclareMathOperator*{\argmin}{argmin}
\DeclareMathOperator{\TV}{TV}
\newtheorem{theorem}{Theorem}[section]
\newtheorem{lemma}[theorem]{Lemma}
\newtheorem{remark}[theorem]{Remark}
\begin{document}
\title{An Image Registration Model in
\\
Electron Backscatter Diffraction}

\author{
Manuel Gr\"af\footnotemark[1]
	\and
Sebastian Neumayer\footnotemark[1]
\and
Ralf Hielscher\footnotemark[2]
	\and
Gabriele Steidl\footnotemark[1]
\and
Moritz Liesegang\footnotemark[3]
\and
Tilman Beck\footnotemark[3]
	}

\maketitle
\renewcommand{\thefootnote}{\fnsymbol{footnote}}
\footnotetext[1]{Institute of Mathematics,
	TU Berlin,
	Germany,
	\{graef, neumayer, steidl\}@math.tu-berlin.de.}

	\footnotetext[2]{Institute of Mathematics,
	TU Chemnitz,
	Germany,
	ralf.heilscher@mathematik.tu-chemnitz.de.}

	\footnotetext[3]{Dept.\ of Mechanical and Process Engineering,
	TU Kaiserslautern,
	Germany,
	\{liesega, beck\}@mv.uni-kl.de.}

\begin{abstract}
Variational methods were successfully applied for registration of gray and RGB-valued image sequences. A common assumption in these models is that pixel-values do not change
under transformations.
Nowadays, modern image acquisition techniques such as electron backscatter tomography (EBSD), which is used in material sciences, can capture images with values in nonlinear spaces.
Here, the image values belong to the quotient space $\SO(3)/ \mathcal S$ 
of the special orthogonal group modulo the discrete symmetry group of the crystal. 
For such data, the assumption that pixel-values remain unchanged under transformations appears to be no longer valid.
Hence, we propose a variational model for the registration of $\SO(3)/\mathcal S$-valued image sequences, taking the dependence of pixel-values on the transformation into account.
More precisely, the data is transformed according to the rotation part in the polar decomposition of the Jacobian of the transformation.
To model non-smooth transformations without obtaining so-called staircasing effects,
we propose to use a total generalized variation like prior.
Then, we prove existence of a minimizer for our model and explain how it can be discretized and minimized by a primal-dual algorithm.
Numerical examples illustrate the performance of our method.
\end{abstract}

\section{Introduction}
%
Variational methods for estimating the displacement between image frames
go back to Horn and Schunck \cite{HS81}. 
Meanwhile, there exists a vast number of refinements and extensions of their approach and we refer to \cite{BPS14,WBTN2006} 
for an overview. 
In particular, models with priors containing higher order derivatives of the displacement field
were  successfully used, e.g., in \cite{HK14,RBP14,TPCB08,YSM07,YSS07}.
In material science, such models were applied for the strain analysis in materials \cite{BaBeEiFiSchSt19,HS2021,HWSSD13},  
where they appear to be more sensitive to abrupt changes in the displacement field than correlation based methods 
used in state-of-the-art software packages such as \cite{BlAdAn15,VEDDAC,VIC}.
While optical flow models with a linearized data term as described above are mainly convex, 
nonconvex models have to be solved in registration \cite{Mod2004,BookMo09}, large deformation diffeomorphic metric mapping \cite{BMTY2005} or metamorphosis \cite{younes2010shapes}.

Nowadays, modern image acquisition techniques can not only produce gray-valued and RGB images,
but also images with values in nonlinear spaces.
Typical examples are diffusion tensor magnetic resonance tomography (DT-MRI), 
where the image values are symmetric
positive definite matrices, and electron backscatter diffraction (EBSD), 
where the image values are from a certain quotient space 
of the rotation group $\SO(3)$.
Recently, the metamorphosis approach of Trouv\'e and Younes \cite{TY2005b} 
in its path discrete form introduced by Berkels et al.~\cite{BER15}, see also \cite{RW2015},
was generalized to manifold-valued images in \cite{NPS2018} with a mathematically sound theory
for Hadamard manifolds in \cite{ENR2020}.
However, the usual ,,gray-value constancy assumption'' from videos does not carry over to the
manifold-valued setting. 
Instead, for image sequences whose values contain directional information 
the data has to also be transformed spatially.
For DT-MRI images, the appropriate handling of orientations was addressed in, 
e.g., \cite{APBG2001,AG2000,CMMWY2006,Yeo2009}.  

In this paper, we focus on sequences of EBSD images, which appear in the microstructural analysis of crystalline materials. 
For every pixel of an EBSD image the phase and the crystal orientation is
measured based on electron diffraction on the crystal lattice of the
specimen \cite{AdWrKu93, KuWrAdDi93}.
The orientations are given by a rotation in $\SO(3)$ modulo 
the finite symmetry group $\mathcal S$ of the crystal.
Polycrystalline materials usually consist of clearly separated regions with similar orientations, so-called grains.
With EBSD, it is possible to analyze the grain structure of the specimen, e.g., the size and distribution of grains, the shape and characteristics of grain boundaries or orientations. 
Based on the microstructural analysis, engineers can
draw conclusions to macrostructural mechanical or functional behavior of the material, e.g., its ductility, fatigue or electrical properties.
EBSD is also used to investigate temporary or permanent microstructural changes, caused by an external influence, such as an applied force \cite{Singh.2018,Wilkinson.2012}. Reconstruction of data back to the initial state by optical flow allows to determine small changes of the microstructure or to quantify local deformations and orientation changes in a region of irregular distributed deformations that can not be detected by common analysis methods.
For more information on EBSD, we refer to \cite{MS06} and for the practical visualization and analysis of EBSD data to  the software package MTEX \cite{MTEX,BHS11}. The segmentation of EBSD data was studied in \cite{BFPS17} and for a statistical model to observed texture evolution of fatigued metal films see \cite{PNK2020}. 

We propose a variational model for estimating the displacement field between EBSD images
that consists of a special data term and prior: 
\begin{itemize}
\item In accordance with the finite strain reorientation strategy \cite{APBG2001}, the data term takes the rotation part of the transformations' Jacobian into account.
Moreover, it relies on the geometric distance in the quotient manifold $\SO(3)/\mathcal S$
and uses the quaternion representation of $\SO(3)$.
\item  The prior (or regularization term) is based on the total generalized variation (TGV)
introduced for image restoration of gray-valued images by Bredies et al.~\cite{BKP10}, see also \cite{SS08,SST11} for the discrete setting.
Later, the concept was generalized for reconstructing tensor-valued images by Valkonen et al.~\cite{VaBrKn13}.
This regularizer allows for sharp transitions in the displacement field without the staircasing effect known, e.g., from total variation regularization. 
\end{itemize}

Let us roughly illustrate how the rotation part in the transformation influences the pixel-values.
Given two images $I_1\colon \Omega \to \mathrm{SO(3)}/\mathcal S$ and $I_2\colon \Omega_1 \to \mathrm{SO(3)}/\mathcal S$, 
we want to find the transformation $\varphi\colon\Omega \to \Omega_1$ between them.
A first idea would be to determine $\varphi$ so that
\begin{equation} \label{eq:naiveTransform}
  I_2 \circ \varphi(x) = I_1(x), \qquad x \in \Omega.
\end{equation} 
In other words, the value $I_1(x) \in \mathrm{SO(3)}/\mathcal S$ at position $x \in \Omega$ 
is taken to the position $\varphi(x)$.
For gray-valued images this corresponds exactly to the gray-value constancy assumption. 
However, for orientation data this naive approach does not reflect the physical conditions, 
since a transformation by a rotation $R \in \mathrm{SO(3)}$ also changes the orientation of the EBSD data, 
i.e., we have that
\begin{equation}   \label{eq:rigidRotation}
    I_2(R x) = R I_1(x), \qquad x \in \Omega.
\end{equation} 
In general, the transformation $\varphi$ is not rigid, 
so that the change in orientation can vary locally. 
For determining the rotation field from the transformation $\varphi$, 
we will use the polar decomposition of $\nabla \varphi$ from continuum mechanics. Then the refined version of \eqref{eq:rigidRotation} for general transformations $\varphi$ is given by \eqref{eq:EBSDTransform} and introduced in Section~\ref{sec:basic_model}.

Figure~\ref{fig:rotating_square} illustrates the difference between  
our variational model with data term given by the straightforward approach \eqref{eq:naiveTransform} and the rotation aware model \eqref{eq:EBSDTransform}. The top row of Figure~\ref{fig:rotating_square} shows two piecewise constant images $I_1$, $I_2$. The square-shaped region of image $I_1$ and its orientation data is rotated in image $I_2$ by 30 degrees. The difference between the colors of the squares in image $I_1$ and image $I_2$ reflect the change of the data induced by the transformation. The orientation data of the background (black) is unchanged. In the bottom row of Figure~\ref{fig:rotating_square} we observe that both models are able to recover the shape of the rotated square. Of course, the orientation data does not change in the reconstruction of $I_2$ by model \eqref{eq:naiveTransform}, cf. bottom left of Figure~\ref{fig:rotating_square}. As intended the rotation aware model \eqref{eq:EBSDTransform} changes the orientation in accordance with the transformation $\varphi$, cf. bottom right of Figure~\ref{fig:rotating_square}. Moreover, the rotation aware model \eqref{eq:EBSDTransform} is able to resolve quite accurate the rigid transformation of the square, even though we are solving for general transformations. In contrast, the naive model \eqref{eq:naiveTransform} introduces large deformation artifacts near the boundaries inside the square.
\begin{figure}
  \begin{center}
    \includegraphics[width=0.48\textwidth]{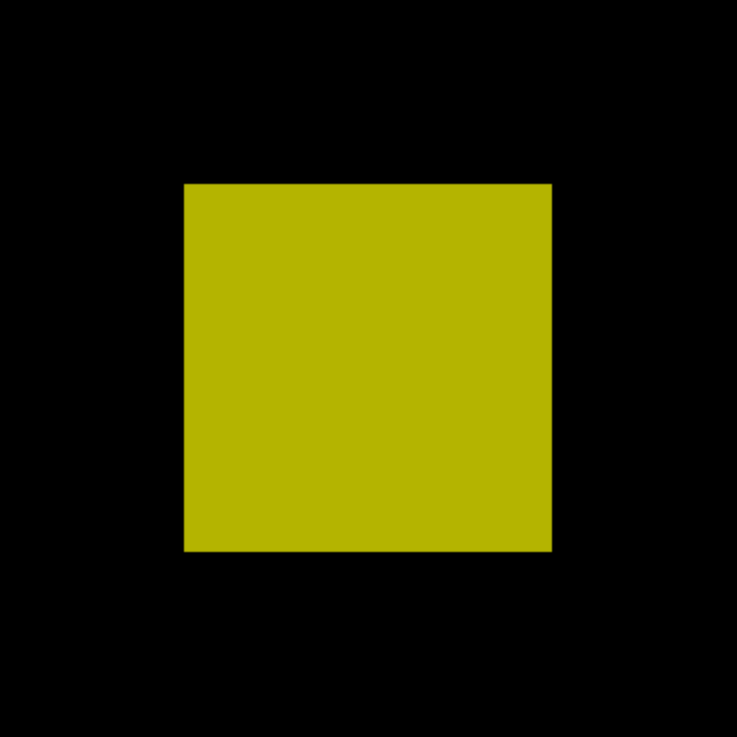}
    \includegraphics[width=0.48\textwidth]{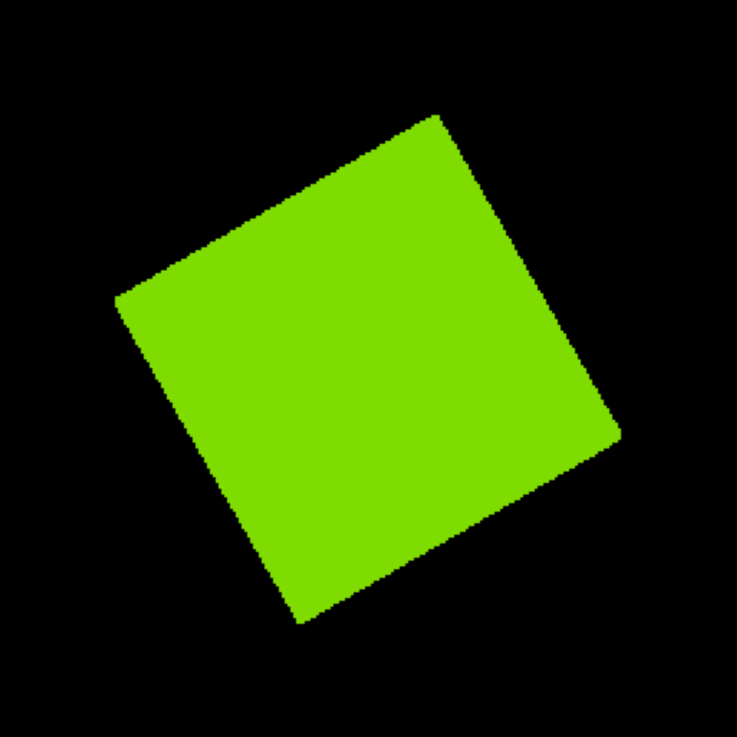}
    \includegraphics[width=0.48\textwidth]{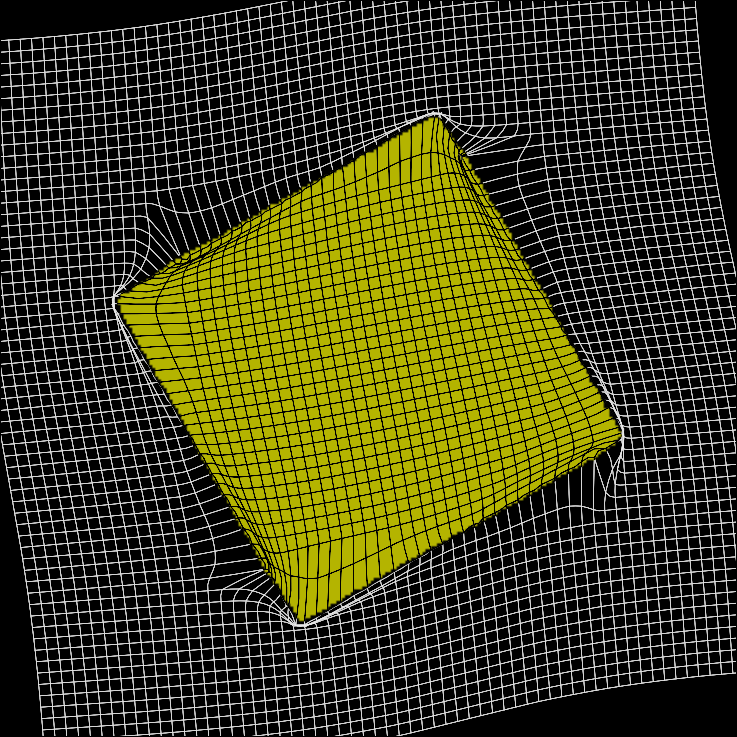}
    \includegraphics[width=0.48\textwidth]{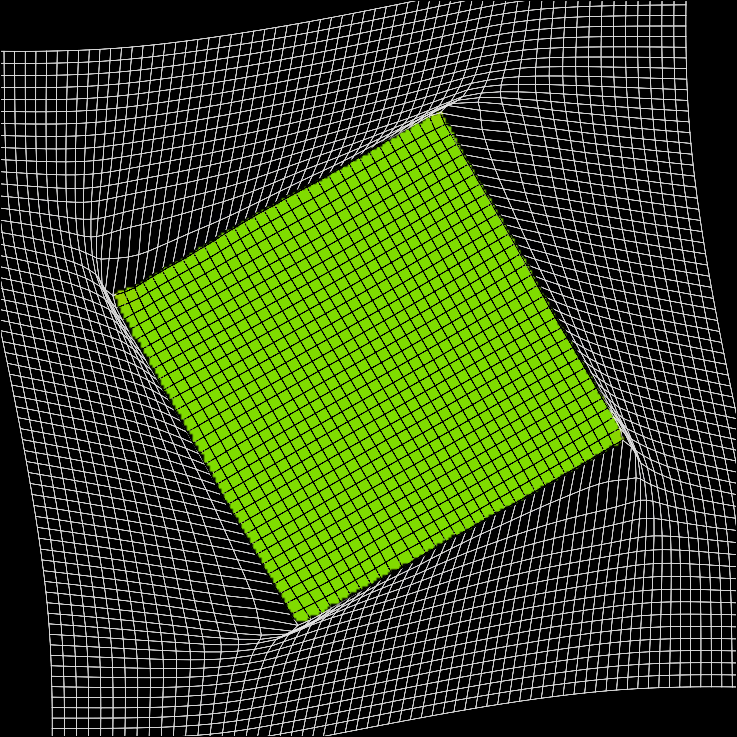}
  \end{center}
  \caption{Top row: Image $I_1$ (left) and image $I_2$ (right) of $\SO(3)$-valued data visualized in RGB space. Bottom row: Reconstruction of $I_2$ overlaid with a grid visualizing $\varphi$ computed with the naive model \eqref{eq:naiveTransform} by $I_1 \circ \varphi^{-1}$ (left) and our new rotation aware model \eqref{eq:EBSDTransform} by $\mathrm R (\nabla \varphi) I_1 \circ \varphi^{-1}$ (right). }
  \label{fig:rotating_square}
\end{figure}

This paper is organized as follows:
In Section~\ref{sec:pre}, we provide an overview on functions 
of bounded variation and matrix-valued Radon measures. 
Then, in Section~\ref{sec:c-model}, we introduce our continuous variational model to determine the optical flow
between $\SO(3)/\mathcal S$-valued images and prove the existence of minimizers.
In Section~\ref{sec:d-model} we discretize the proposed variational model by sampling bilinear approximations at different scales.
Here, we make use of the quaternion representation of $\mathrm{SO}(3)$, which is introduced at the beginning of the section.
A primal-dual optimization algorithm is proposed in Section~\ref{sec:alg}.
We emphasize that our discretization and optimization strategy 
is particularly suited for using parallel computing devices.
The proposed algorithms are implemented for GPU devices using Python together with the CUDA toolkit.
In Section~\ref{sec:numerics}, we demonstrate the performance of our algorithm for synthetic as well as real-world data.
Finally, we draw conclusions and indicate directions of future research in Section~\ref{sec:conclusions}.
%
\section{Preliminaries} \label{sec:pre}
%
To establish our variational model, several technical preliminaries are necessary.
Readers who are familiar with the topic may move immediately to the next section.
We mainly follow the lines of \cite{AFP00,VaBrKn13}, where we stick to unsymmetrized tensors.
By $\mathcal T^k(\mathbb R^d)$, $k \in \mathbb N$, we denote
the set of all $k$-tensors on the vector space $\mathbb R^d$, i.e., for $k \ge 1$
the $k$-linear mappings $A\colon \mathbb R^d \times \cdots \times \mathbb R^d \to \mathbb R$. By convention, 
a $0$-tensor is real number, $T^1(\mathbb R^d)$ 
is the vector space $\mathbb R^d$ and $\mathcal T^2(\mathbb R^d)$ 
is the vector space of $d \times d$ matrices. 
Taking the standard basis 
$e_i \in \mathbb R^d$, $i=1,\dots,d$, any tensor $A \in \mathcal T^k(\mathbb R^d)$ 
is uniquely determined by its coefficients 
$A_{i_1,\dots,i_k} \coloneqq A(e_{i_1},\dots, e_{i_k})$, $i_l \in \{1,\dots, d\}$, $l=1,\dots,k$.
With the inner product and the associated \emph{Frobenius norm} 
\[
  \langle A, B \rangle = \sum_{i \in \{1,\dots,d\}^k} A_{i} B_{i}, \quad 
	\| A \|_{F} \coloneqq \sqrt{\langle A, A \rangle}\qquad A, B \in \mathcal T^k(\mathbb R^d),
\]
the space $\mathcal T^k(\mathbb R^d)$ becomes a Hilbert space.
For a bounded domain $\Omega \subset \mathbb R^d$ with Lipschitz boundary,  
we define the $L^p$-space of $p$-integrable $k$-tensor fields 
$u\colon \Omega \to \mathcal T^k(\mathbb R^d)$ by
\[
L^p(\Omega, \mathcal T^k(\mathbb R^d)) \coloneqq 
\bigl\{ u\colon\Omega \to \mathcal T^k(\mathbb R^d) \;:\; \|u\|_{p} < \infty \bigr\} , 
\qquad p \in [1,\infty],
\]
where the $p$-norm is defined as
\[
  \| u \|_p \coloneqq 
  \begin{cases} 
    \big( \int_\Omega \|u\|_F^p \dx x \big)^\frac1p, & p \in [1,\infty),\\
    \mathrm{ess}\sup_{x \in \Omega} \| u(x) \|_F, & p = \infty.
  \end{cases}
\]
A tensor field $u\colon\Omega \to \mathcal T^k(\mathbb R^d)$ is differentiable 
if all coordinate functions $u_{i_1,\dots,i_k}\colon\Omega \to \mathbb R$ are differentiable. 
The class of $l$-times continuously differentiable $k$-tensor fields is denoted by $C^l(\Omega, \mathcal T^k(\mathbb R^d))$, $k\in \mathbb N$, and the subspace of compactly supported $k$-tensor fields by $C^l_c(\Omega, \mathcal T^k(\mathbb R^d))$.
Further, the closure of $C^l_c(\Omega, \mathcal T^k(\mathbb R^d))$ with respect to the $C^l$-norm is denoted by $C^l_0(\Omega, \mathcal T^k(\mathbb R^d))$.
For $u \in L^1(\Omega, \mathcal T^k(\mathbb R^d))$, the \emph{distributional gradient} 
$\nabla u \in C^1_c(\Omega, \mathcal T^{k+1}(\mathbb R^d))^*$ is defined by
\begin{equation}   \label{eq:Gradient}
  \nabla u(\psi) \coloneqq \int_\Omega \langle u, \mathrm{div}\, \psi \rangle \dx x, \qquad \psi \in C^1_c(\Omega, \mathcal T^{k+1}(\mathbb R^d)),
\end{equation}
where for $\psi \in C_c^1(\Omega, \mathcal T^k(\mathbb R^d))$, $k\ge 1$, the \emph{divergence} $\mathrm{div}\, \psi \in C(\Omega, \mathcal T^{k-1}(\mathbb R^d))$ is given as
\[
  (\mathrm{div}\, \psi)_{i_1,\dots,i_{k-1}}(x) \coloneqq \sum_{i=1}^d \frac{\partial \psi_{i,i_1,\dots,i_{k-1}}}{\partial x_{i}}(x), \qquad i_l \in \{1,\dots,d\}, \quad l=1,\dots,k-1.
\]
Then, the \emph{total variation} of $u \in L^1(\Omega, \mathcal T^{k}(\mathbb R^d))$ is given by
\[
  \TV(u) \coloneqq \sup \bigl\{\nabla u(\psi) \;:\; \psi \in C^1_c(\Omega, \mathcal T^{k+1}(\mathbb R^d)),\, \|\psi\|_\infty \le 1 \bigr\}.
\]
The \emph{space of tensor fields of bounded variation} is defined as
\begin{equation}   \label{eq:BV}
\BV(\Omega, \mathcal T^k(\mathbb R^d)) \coloneqq \bigl\{ u \in L^1(\Omega, \mathcal T^k(\mathbb R^d)) \;:\; \TV(u) < \infty \bigr\}.
\end{equation}
A function $\mu\colon\mathcal B(\Omega) \to \mathcal T^k(\mathbb R^d)$ on the Borel $\sigma$-algebra $\mathcal B(\Omega)$ is called a 
\emph{$k$-tensor-valued Radon-measure} if every coordinate function $\mu_{i_1,\dots,i_d}\colon\mathcal B(\Omega) \to \mathbb R$ is a Radon measure. 
We denote by $\mathcal M(\Omega, \mathcal T^k(\mathbb R^d))$  the \emph{space of $k$-tensor-valued finite Radon-measures}. 
By the Riesz--Markov--Kakutani representation theorem, it holds $C_0(\Omega, \mathcal T^k(\mathbb R^d))^* \cong \mathcal M(\Omega, \mathcal T^k(\mathbb R^d))$.
This allows us to equip the space $\mathcal M(\Omega, \mathcal T^k(\mathbb R^d))$ with the corresponding \emph{weak* convergence}.
Since for $u \in \mathrm BV(\Omega, \mathcal T^k(\mathbb R^d))$ we have
\[
  |\nabla u(\psi)| \le \TV(u) \|\psi\|_\infty, \qquad \psi \in C^1_c(\Omega, \mathcal T^{k+1}(\mathbb R^d)),
\]
and since $C_c^1(\Omega, \mathcal T^{k+1}(\mathbb R^d))$ 
is dense and continuously embedded in $C_0(\Omega, \mathcal T^{k+1}(\mathbb R^d))$, 
the gradient $\nabla u$ can be uniquely extended to a functional on $C_0(\Omega, \mathcal T^{k+1}(\mathbb R^d))$ using the Hahn--Banach theorem.
Hence, we can associate to $\nabla u$ a unique measure $Du \in \mathcal M(\Omega, \mathcal T^{k+1}(\mathbb R^d))$ such that 
\begin{equation}   \label{eq:TVmeasure}
  \nabla u(\psi) = 
  \sum_{i \in \{1,\dots,d\}^{k+1}} \int_\Omega \psi_i \dx Du_{i}, \qquad \psi \in C^1_c(\Omega, \mathcal T^{k+1}(\mathbb R^d)).
\end{equation}
In the rest of the paper, we only require $D u$ and reuse $\nabla u$ for the density of $D u$ with respect to the Lebesgue measure.
For a measure $\mu \in \mathcal M(\Omega, \mathcal T^k(\mathbb R^d))$, we define the \emph{total variation norm} by
\[
  \vert \mu \vert(\Omega) 
	\coloneqq 
	\sup \Bigl\{ \sum_{i \in \{1,\dots,d\}^{k}} \int_\Omega  \psi_i \dx \mu_i \;:\; 
	\psi \in C_c(\Omega, \mathcal T^k(\mathbb R^d)), \quad \|\psi\|_\infty \le 1 \Bigr\}
\]
and the (second order) \emph{total generalized variation} of a tensor field $u \in \BV(\Omega, \mathcal T^k(\mathbb R^d))$ by
\begin{equation} \label{eq:tgv}
  \mathrm{TGV}_{\alpha} (u) \coloneqq \inf_{w \in \BV(\Omega, \mathcal T^{k+1}(\mathbb R^d))} 
	\alpha_1 \vert Du - w \lambda \vert(\Omega)  + \alpha_2 \vert Dw \vert(\Omega) ,\quad \alpha = (\alpha_1,\alpha_2)> 0,
\end{equation}
where we identify $w \in \BV(\Omega, \mathcal T^{k+1}(\mathbb R^d))$ with the canonical measure
\[
(w\lambda)(\mathcal A) \coloneqq \int_{\mathcal A} w \dx x, \qquad \mathcal A \in \mathcal B(\Omega).
\]
The space $\BV(\Omega, \mathcal T^k(\mathbb R^d))$ becomes a Banach space with the norm
\[
 \| u \|_{\mathrm BV} \coloneqq \|u\|_1 + \TV(u).  
\]
A sequence $u_n$ \emph{converges weakly*} in $\BV(\Omega, \mathcal T^k(\R^d))$ if $u_n \to u$ 
strongly in $L^1(\Omega, \mathcal T^k(\R^d))$ and $D u_n \weaklystar Du$ in the sense of measures.
Equivalently, we can require $\sup_n \TV(u_n) < \infty$ instead of $D u_n \weaklystar Du$.
Recall that the space $C^\infty(\overline{\Omega}, \mathcal T^k(\R^d))$ is dense w.r.t.\ weak* convergence.
Further, any sequence 
$u_n \in \BV(\Omega, \mathcal T^k(\R^d))$ with $\sup_{n \in \N} \Vert u_n \Vert_{\BV}< \infty$ 
admits a weakly* convergent subsequence.

The following theorem provides a generalization of the polar decomposition to matrix-valued Radon measures, 
cf. \cite[Sec.~4]{RoRo68}. Let $\mathrm{SPD}(d)$ denote the cone of symmetric, positive semi-definite matrices.

\begin{theorem}  \label{the:PolarDecompositionMeasure} 
  Let $\mu = \big(\mu_{i,j}\big)_{i,j=1}^d \in \mathcal M(\Omega, \mathcal T^2(\mathbb R^d))$ and $\sigma$ 
	be a Radon measure such that the component-wise measures $\mu_{i,j}$ are absolutely continuous w.r.t.\ $\sigma$, i.e., 
  \[
   \mu(\mathcal A) = \int_{\mathcal A} M \dx \sigma, \qquad \mathcal A \in \mathcal B(\Omega),
  \]
  for a measurable matrix-valued function $M \coloneqq (M_{i,j})_{i,j=1}^d$.
  Then $|\mu|$ defined by
  \[
    |\mu|(\mathcal A) \coloneqq \int_{\mathcal A} V \dx \sigma, 
		\qquad 
		\mathcal A \in \mathcal B(\Omega),  \qquad V \coloneqq \big( M^\tT  M\big)^\frac12, 
  \]
  is a Radon measure with density $V\colon \Omega \rightarrow \mathrm{SPD}(d)$.
	The measure $|\mu|$ does not depend on the choice of $\sigma$.
  Further, there exists a matrix-valued function $R\colon \Omega\to \mathrm O(d)$ 
	such that all component-functions $R_{i,j}\colon \Omega \to \mathbb R$, $i,j=1,\dots,d$, are measurable and 
   \[
    \mu_{i,j}(\mathcal A) = \sum_{k=1}^d \int_{\mathcal A} R_{i,k} \dx |\mu|_{k,j}, \qquad \mathcal A \in \mathcal B(\Omega).
   \]
   If additionally $\det V \ne 0$ $\sigma$-a.e., then $R$ 
	is uniquely determined $\sigma$-a.e.
\end{theorem}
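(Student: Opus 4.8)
The plan is to reduce every assertion to the pointwise polar decomposition of a single matrix, handling the four claims in turn; the only genuinely delicate point is the \emph{measurable} choice of the orthogonal factor.

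First I would record the pointwise facts. For any $A \in \mathcal T^2(\mathbb R^d)$ the matrix $A^\tT A$ is symmetric positive semi-definite, hence has a unique symmetric positive semi-definite square root, and $A \mapsto (A^\tT A)^{1/2}$ is continuous. Applying this to the density $M$ shows that $V = (M^\tT M)^{1/2}$ is measurable with values in $\mathrm{SPD}(d)$. Since $V$ has the same singular values as $M$, we have $\|V\|_F = \|M\|_F$ pointwise; and as $\mu$ is finite, $|\mu_{i,j}|(\Omega) = \int_\Omega |M_{i,j}| \dx \sigma < \infty$, so $\|M\|_F \in L^1(\sigma)$ and therefore $\|V\|_F \in L^1(\sigma)$. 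Consequently $|\mu|(\mathcal A) = \int_{\mathcal A} V \dx \sigma$ defines a finite $\mathcal T^2(\mathbb R^d)$-valued Radon measure with density in $\mathrm{SPD}(d)$, which is the first assertion.

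For independence of $\sigma$, given a second representation $\mu(\mathcal A) = \int_{\mathcal A} M' \dx \sigma'$ I would pass to the common dominating measure $\tau = \sigma + \sigma'$ and write $\sigma = f \tau$, $\sigma' = g\tau$ with $f, g \ge 0$. The $\tau$-density of $\mu$ is $N = Mf = M'g$, and the scalar homogeneity $\big((fM)^\tT (fM)\big)^{1/2} = f (M^\tT M)^{1/2}$ for $f \ge 0$ gives $(N^\tT N)^{1/2} = fV = gV'$. Hence $\int_{\mathcal A} V \dx \sigma = \int_{\mathcal A} (N^\tT N)^{1/2} \dx \tau = \int_{\mathcal A} V' \dx \sigma'$, and this expression is symmetric in the two representations, so $|\mu|$ does not depend on $\sigma$.

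The heart of the proof is the construction of $R$. Pointwise, the polar decomposition read off from the singular value decomposition guarantees that $F(x) := \{ R \in \mathrm O(d) : R\, V(x) = M(x) \}$ is nonempty, and it is plainly a closed subset of the compact group $\mathrm O(d)$. I would regard $F$ as a set-valued map: since $(x,R) \mapsto R\,V(x) - M(x)$ is measurable in $x$ and continuous in $R$, the graph of $F$ is measurable, and a measurable selection theorem of Kuratowski--Ryll-Nardzewski type (applicable because $\mathrm O(d)$ is Polish, $F$ is closed-valued and measurable, after completing the measure space if necessary) yields a measurable $R\colon \Omega \to \mathrm O(d)$ with $R V = M$ holding $\sigma$-a.e. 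Writing this in components and integrating against $\sigma$ gives $\mu_{i,j}(\mathcal A) = \sum_k \int_{\mathcal A} R_{i,k} V_{k,j} \dx \sigma = \sum_k \int_{\mathcal A} R_{i,k} \dx |\mu|_{k,j}$, as required. This selection step is the main obstacle: where $V(x)$ is singular, $R(x)$ is determined only on the range of $V(x)$ and may be chosen freely (orthogonally) on its kernel, so no continuous closed-form formula is available and one must exploit measurability of the multifunction rather than continuity of an explicit expression. Finally, where $\det V \ne 0$ the equation $RV = M$ forces $R = M V^{-1}$, which is single-valued; thus under the additional hypothesis $\det V \ne 0$ $\sigma$-a.e.\ the factor $R$ is uniquely determined $\sigma$-a.e.
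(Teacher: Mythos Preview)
The paper does not actually supply a proof of this theorem; it is stated in the preliminaries with a reference to \cite[Sec.~4]{RoRo68} and then used later without further justification. So there is no ``paper's own proof'' to compare against.

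Your argument is correct. The reduction to pointwise polar decomposition together with the observation $\|V\|_F = \|M\|_F$ handles well-definedness and finiteness of $|\mu|$, the common-dominating-measure computation for independence of $\sigma$ is standard and clean, and the uniqueness clause is immediate. The only place that merits a small comment is the measurable selection step: invoking Kuratowski--Ryll-Nardzewski via ``measurable graph'' tacitly uses that, for closed-valued multifunctions into a Polish space, graph measurability implies weak measurability once the underlying $\sigma$-algebra is complete---your parenthetical ``after completing the measure space if necessary'' acknowledges exactly this. If you prefer to avoid that subtlety, you can verify weak measurability directly: for open $U \subset \mathrm O(d)$, the set $\{x : F(x) \cap U \ne \emptyset\}$ equals $\{x : \inf_{R \in D} \|R\,V(x) - M(x)\|_F = 0\}$ for any countable dense $D \subset U$, and this infimum of countably many measurable functions is measurable.
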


\section{Continuous Image Registration Model} \label{sec:c-model}
Let $\Omega, \Omega_1 \subset \R^2$ be bounded
domains with Lipschitz boundary.
Further, assume that 
$I_1 \in C (\overline{\Omega}, \mathrm{SO(3)}/\mathcal{S})$ and $I_2 \in C (\overline{\Omega}_1, \mathrm{SO(3)}/\mathcal{S})$ 
are two given images.
In the following, we discuss various different variational models for such data, 
where the optimization domain is 
$\BV(\Omega, \overline{\Omega}_1)$, i.e., functions in $\BV(\Omega, \R^2)$ 
whose range is restricted to $\overline{\Omega}_1$.
Naturally, the constraint $\varphi(\Omega) \subset \overline{\Omega}_1$ has to be understood in an a.e.\ sense.
More precisely, there has to be a representative such that
$\varphi(\Omega) \subset \overline{\Omega}_1$ holds.
Note that $\BV(\Omega, \overline{\Omega}_1)$ is a closed subset of $\BV(\Omega, \R^2)$
w.r.t.\ $L^1(\Omega, \mathbb R^2)$-convergence and hence also w.r.t.\ weak* convergence.
Further, $\BV(\Omega, \overline{\Omega}_1)$ is convex if $\overline{\Omega}_1$ is convex. 
From now on, we always denote the distributional gradient of
$\varphi \in \BV(\Omega, \overline{\Omega}_1)$ by $D \varphi \in \mathcal M(\Omega, \mathcal T^{2}(\mathbb R^2))$ and 
the \emph{density of its absolutely continuous part} by $\nabla \varphi$, 
i.e., 
$D \varphi = \nabla \varphi \lambda + (D \varphi)^\perp$.
Note that $\nabla \varphi$ coincides with the weak gradient of $\varphi$ if it exists.

We are looking for an appropriate variational model to determine the transformation $\varphi$
in sequences of EBSD images of the form
\begin{equation}\label{eq:general}
\inf_{\varphi \in \BV(\Omega,  \overline{\Omega}_1)} \mathcal D\bigl(\varphi;I_1,I_2\bigr) 
+ \mathcal R(\varphi),
\end{equation}
where the  data term $\mathcal D\bigl(\varphi; I_1,I_2\bigr)$ takes rotation of the data during the transformation
into account and the regularizer $\mathcal R(\varphi)$ makes the the problem well-posed.

In the following subsection, we propose a basic model for which the existence of a minimizer is not ensured.
However, the subsequent subsections give modifications of the setting
making the problem well-posed in the sense that a minimizer exists.

\subsection{Basic Model}\label{sec:basic_model}
Recall that the pixel values are equivalence classes 
$[R]  \coloneqq \{ R S \;:\; S \in \mathcal S\} \in \SO(3) / \mathcal S$.
In the data term, we use for  $[R_1], [R_2] \in \SO(3) / \mathcal S$ the distance  
\begin{equation} \label{eq:distance}
\dx_{\mathrm{SO(3)}/\mathcal{S}} ([R_1],[R_2])
\coloneqq 
\min_{S \in \mathcal S}
\dx_{\mathrm{SO(3)}} ( R_1 S,  R_2)
\end{equation}
induced by the geodesic distance 
$$ 
\dx_{ \mathrm{SO(3)} }(R_1, R_2) \coloneqq \sqrt{2} \,  \mathrm{arccos} \left( \tfrac12 (\trace(R_1^\tT R_2) - 1) \right)
$$
on $\mathrm{SO(3)}$, see \cite{St1964}.
Note that the distance \eqref{eq:distance} is rotation invariant.
Later we use the representation of the elements of $\mathrm{SO(3)}$ as quaternions 
and adapt the distance accordingly.
To take the rotation of the data into account, we use the polar decomposition 
of the density $\nabla \varphi \colon \Omega \to \mathcal T^2(\R^2)$.
If  $\det(\nabla \varphi(x)) > 0$, 
there exists a unique \emph{polar decomposition} 
\begin{equation}   \label{eq:polarDecomposition}
  \nabla \varphi(x) = R_\varphi(x) V_\varphi(x), \quad x \in \Omega,
\end{equation}
where $R_\varphi (x) \in \mathrm{SO}(2)$ and $V_\varphi(x) \in \mathrm{SPD}(2)$.
More precisely, if $\nabla \varphi = U \Sigma \tilde U^\tT$ denotes the singular value decomposition of 
$\nabla \varphi(x)$, then we have
\begin{equation}   \label{eq:polarDecomposition_svd}
R_\varphi = \nabla \varphi \left(\nabla \varphi^\tT \nabla \varphi \right)^{-\frac12} = U \tilde U^\tT, \quad 
V_\varphi = \tilde U \Sigma \tilde U^\tT.
\end{equation}
Note that Theorem \ref{the:PolarDecompositionMeasure} provides a polar decomposition for more general measures.
However, using such an approach, it is not obvious how to ensure uniqueness or even lower semi-continuity of the corresponding functional based on some regularizer as it is done below.
EBSD techniques acquire only two-dimensional images of three-dimensional material probes.
Using the two-dimensional transformation $\varphi\colon \Omega \rightarrow \bar \Omega_1$, 
we are only able to catch changes in orientation by rotations around the axis orthogonal to the image plane, 
so that we arrive at the following transformation model of EBSD data
\begin{equation}   \label{eq:EBSDTransform}
   I_2\circ\varphi = \mathrm{R}(\nabla \varphi)  I_1, 
	\qquad 
	\mathrm{R}(\nabla \varphi) \coloneqq\begin{pmatrix} 
	R_\varphi & 0 \\ 0 & 1 \end{pmatrix}.
\end{equation}
In summary, a natural choice for the data term appears to be
\begin{align} \label{eq:data}
\mathcal D \bigl(\varphi;I_1,I_2\bigr) 
&\coloneqq 
\int_{\Omega} 
\dx_{\mathrm{SO(3)}/\mathcal{S}} \bigl( 
\mathrm{R}(\nabla \varphi) I_1, I_2 \circ \varphi
\bigr) \dx x = \int_{\Omega} 
\dx_{\mathrm{SO(3)}/\mathcal{S}} \left( 
I_1, \mathrm{R}(\nabla \varphi)^\tT I_2 \circ \varphi
\right) \dx x ,\quad\,\,
\end{align}
where the last equality holds true due to the rotation invariance of the distance.

Now, the regularizer $\mathcal R$ must be constructed such that it ensures 
a unique polar decomposition in order to make the whole functional well-defined.
Further, the regularizer should control the $\BV$ norm to guarantee its coercivity.
A good candidate seems to be
\begin{equation} \label{eq:orig} 
  \mathcal R(\varphi) \coloneqq \TGV_\alpha(\varphi - \mathrm{Id}) + \beta \int_\Omega f(\det \nabla \varphi) \dx x,
	\qquad 
	 \beta > 0,
\end{equation}
where
\begin{equation} \label{def:f}
f(x) \coloneqq 
\left\{                         
\begin{array}{ll}
x^{-1} + x& \mathrm{if} \; x>0,\\
+\infty   &\mathrm{otherwise}. 
\end{array}
\right.
\end{equation}
The second summand in the regularizer ensures 
that the polar decomposition exists for a.e.\ $x \in \Omega$ as soon as the energy is finite.
However, this regularizer is not weakly* lower semi-continuous due to the second summand, cf.~\cite[p.~182]{Dep12}.
An alternative would be to use the modified regularizer 
\begin{equation} \label{eq:relax}                 
  \mathcal R_{\mathrm{relax}} (\varphi) 
	\coloneqq 
	\inf_{\substack{\varphi_n \to \varphi \text{ in } L^1(\Omega, \R^2) \\ \varphi_n 
	\in W^{1,2}(\Omega, \R^2)}} \bigl\{ \liminf_{n \rightarrow \infty} \mathcal R(\varphi_n)\bigr\}, 
	\qquad 
	\varphi \in \BV(\Omega, \R^2).
\end{equation}
Note, that  $\mathcal R_{\mathrm{relax}}$ is the relaxation 
of the functional $\mathcal R|_{W^{1,2}(\Omega, \mathbb R^2)}$ 
to $\BV(\Omega, \R^2)$, see also \cite{AD94,AD92,FM97,FM92,KR10} for similar approaches.
By Lemma \ref{lem:xxx} in the appendix, we have that indeed 
$\mathcal R_{\mathrm{relax}}(\varphi) = \mathcal R(\varphi)$ for $\varphi \in W^{1,2}(\Omega,\R^2)$.
Here  $W^{1,p}(\Omega,\R^2)$, $p \in [1,\infty)$ denotes the Banach space of functions in $L^p(\Omega, \R^2)$
having weak first order derivatives in $L^p(\Omega, \R^2)$ equipped with the norm
$$
\|\varphi\|_{W^{1,p}} \coloneqq \left( \|\varphi\|_{p}^p  +  \|\tfrac{\partial}{\partial x}  \varphi\|_p^p  
+ 
\|\tfrac{\partial}{\partial y}  \varphi\|_p^p \right)^\frac{1}{p}.
$$
Further, we prove the following theorem in Appendix \ref{app:A}.

\begin{theorem}\label{thm:basic}
Let the data term $\mathcal D\bigl(\varphi;I_1,I_2 \bigr)$ be weakly* lower semi-continuous. 
Then the problem
\begin{equation}\label{eq:var_problem_orig}
\inf_{\varphi \in \BV(\Omega,  \overline{\Omega}_1)} 
\mathcal D\bigl(\varphi;I_1,I_2 \bigr)
+ 
\mathcal R_{\mathrm{relax}} (\varphi) 
\end{equation}
has a minimizer.
\end{theorem}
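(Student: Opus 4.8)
The plan is to apply the direct method of the calculus of variations to the functional $E(\varphi) \coloneqq \mathcal D(\varphi; I_1, I_2) + \mathcal R_{\mathrm{relax}}(\varphi)$ on the admissible set $\BV(\Omega, \overline{\Omega}_1)$. We may assume the infimum is finite, since otherwise every admissible $\varphi$ is trivially a minimizer. Indeed, as the geodesic distance on the compact quotient $\SO(3)/\mathcal S$ is bounded, the data term $\mathcal D$ is finite and nonnegative for every $\varphi$, and by Lemma~\ref{lem:xxx} one has $\mathcal R_{\mathrm{relax}}(\mathrm{Id}) = \mathcal R(\mathrm{Id}) = \TGV_\alpha(0) + \beta |\Omega| f(1) < \infty$ whenever $\mathrm{Id}$ is admissible. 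I would then fix a minimizing sequence $(\varphi_n) \subset \BV(\Omega, \overline{\Omega}_1)$ with $\sup_n E(\varphi_n) < \infty$.

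The crucial and most delicate step is coercivity, that is, producing a uniform bound $\sup_n \|\varphi_n\|_{\BV} < \infty$. First, since $\mathcal D \ge 0$, the energy bound gives $\sup_n \mathcal R_{\mathrm{relax}}(\varphi_n) < \infty$. I would next show that the relaxation retains a lower bound by the TGV term, namely $\mathcal R_{\mathrm{relax}}(\varphi) \ge \TGV_\alpha(\varphi - \mathrm{Id})$: along any recovery sequence $\psi_m \to \varphi$ in $L^1$ with $\psi_m \in W^{1,2}(\Omega, \R^2)$ and $\mathcal R(\psi_m) \to \mathcal R_{\mathrm{relax}}(\varphi)$, the AM--GM bound $f \ge 2$ on $(0,\infty)$ keeps $\beta \int_\Omega f(\det \nabla \psi_m) \dx x$ nonnegative, so $\TGV_\alpha(\psi_m - \mathrm{Id})$ stays bounded; hence $\psi_m - \mathrm{Id}$ converges weakly* to $\varphi - \mathrm{Id}$, and weak* lower semi-continuity of $\TGV_\alpha$ yields the claim. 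Consequently $\sup_n \TGV_\alpha(\varphi_n - \mathrm{Id}) < \infty$. Because every $\varphi_n$ takes values in the bounded set $\overline{\Omega}_1$, the norms $\|\varphi_n - \mathrm{Id}\|_1$ are uniformly bounded as well, and a TGV--Poincar\'e inequality (the equivalence $\|u\|_{\BV} \le C(\|u\|_1 + \TGV_\alpha(u))$ on bounded Lipschitz domains) then furnishes $\sup_n \|\varphi_n - \mathrm{Id}\|_{\BV} < \infty$, hence $\sup_n \|\varphi_n\|_{\BV} < \infty$.

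It remains to pass to the limit. By the $\BV$ compactness recalled in Section~\ref{sec:pre}, a subsequence satisfies $\varphi_{n_k} \weaklystar \varphi^\ast$, i.e.\ $\varphi_{n_k} \to \varphi^\ast$ in $L^1(\Omega, \R^2)$ and $D\varphi_{n_k} \weaklystar D\varphi^\ast$. Since $\BV(\Omega, \overline{\Omega}_1)$ is closed under $L^1$-convergence, the limit $\varphi^\ast$ is admissible. Finally, $\mathcal D$ is weakly* lower semi-continuous by hypothesis, while $\mathcal R_{\mathrm{relax}}$, being the relaxation of $\mathcal R|_{W^{1,2}(\Omega,\R^2)}$ with respect to $L^1$-convergence, is $L^1$-lower semi-continuous and therefore also weakly* lower semi-continuous, because weak* convergence in $\BV$ implies $L^1$-convergence. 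Adding the two liminf inequalities gives $E(\varphi^\ast) \le \liminf_k E(\varphi_{n_k}) = \inf E$, so $\varphi^\ast$ is the desired minimizer. I expect the main obstacle to be the coercivity argument of the second paragraph: one must verify both that the TGV lower bound survives the relaxation and that the TGV seminorm genuinely controls the full $\BV$-norm via a Poincar\'e-type estimate, with the range constraint supplying the missing $L^1$-control.
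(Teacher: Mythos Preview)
Your proof is correct and follows the same overall scheme as the paper's: direct method, coercivity via a TGV--Poincar\'e estimate, BV weak* compactness, then lower semi-continuity of both terms. The one noteworthy difference lies in how you obtain the coercivity bound $\mathcal R_{\mathrm{relax}}(\varphi)\ge \TGV_\alpha(\varphi-\mathrm{Id})$. You argue directly: take a recovery sequence $\psi_m\to\varphi$ in $L^1$, drop the nonnegative determinant term using $f\ge 2$, and pass to the liminf via the $L^1$-lower semi-continuity of $\TGV_\alpha$. The paper instead routes this through the stronger inequality $\mathcal R_{\mathrm{relax}}(\varphi)\ge \mathcal R(\varphi)$ of Lemma~\ref{lem:xxx}, whose proof in turn relies on a nontrivial lower semi-continuity result for $\varphi\mapsto\int_\Omega f(\det\nabla\varphi)\dx x$ due to Celada--Dal~Maso (Theorem~\ref{thm:conv}). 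Your shortcut is perfectly valid and more elementary for the purposes of Theorem~\ref{thm:basic}; the paper's detour through Lemma~\ref{lem:xxx} is not needed for existence, though that lemma is independently useful since it shows $\mathcal R_{\mathrm{relax}}=\mathcal R$ on $W^{1,2}(\Omega,\R^2)$. For the $L^1$-lower semi-continuity of $\mathcal R_{\mathrm{relax}}$, your appeal to the general property of relaxations is exactly what the paper's Lemma~\ref{lem:weak*semi} spells out with a short diagonal argument.
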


Unfortunately, our data term
$\mathcal D\bigl(\varphi;I_1,I_2 \bigr)$ 
in \eqref{eq:data}  
is  not even quasiconvex and establishing weak* lower semi-continuity appears to be challenging.
In particular, all lower semi-continuity results for integral functionals that 
we are aware of require either quasiconvexity or some stronger 
notion of convergence than the weak* convergence.
Therefore, we investigate two modifications of the general setting 
that ensure the existence of a minimizer by exploiting stronger modes of convergence.

\subsection{Smoothing of \texorpdfstring{$D\varphi$}{D Phi}}\label{sec:well-posed_modifications_1}
We choose some mollifier $\rho_\eps \in C_c^\infty(\R^2)$ with (potentially small) smoothing parameter $\eps >0$.
Its \emph{convolution} with the Radon measure $D\varphi \in \mathcal M (\Omega, \mathcal T^2(\R^2) )$  
is the function defined by 
$$
\rho_\eps \ast D\varphi = \int_{\Omega} \rho_\eps(\cdot-y) \dx D\varphi(y).
$$
Then, we  replace $\nabla \varphi$ by $\rho_\eps \ast D\varphi$ in \eqref{eq:data} and \eqref{eq:orig}
and consider the variational problem
\begin{equation}\label{eq:var_problem1}
\inf_{\varphi \in \BV(\Omega,  \overline{\Omega}_1)} 
\mathcal D\bigl(\varphi;I_1,I_2\bigr) + \TGV_\alpha(\varphi - \text{Id}) + \beta\int_{\Omega} f\bigl(\det(\rho_\eps \ast D \varphi)\bigr) \dx x
\end{equation}
with
\begin{equation} \label{eq:data_moll}
\mathcal D \bigl(\varphi;I_1,I_2\bigr) 
\coloneqq
\int_{\Omega} 
\dx_{\mathrm{SO(3)}/\mathcal{S}} \bigl( 
\mathrm{R}(\rho_\eps \ast D\varphi) I_1, I_2\circ \varphi
\bigr) \dx x.
\end{equation}
To show existence of minimizers, we use the following compactness result, see \cite[Lem.~B.2]{HVW15}.
\begin{lemma}\label{lem:comp}
Let $\rho_\eps  \in C_c^\infty(\R^2)$ and suppose $\mu_n \weaklystar \mu \in \mathcal M(\Omega;\R^2)$.
Then $\rho_\eps \ast \mu_n \to \rho_\eps \ast \mu$ in $L^\infty(\R^2)$.
\end{lemma}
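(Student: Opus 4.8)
The plan is to deduce the uniform ($L^\infty$) convergence from two ingredients, \emph{pointwise} convergence everywhere and an \emph{equi-Lipschitz} estimate for the family $\{\rho_\eps \ast \mu_n\}_n$, combined by a finite-net argument of Arzel\`a--Ascoli type. As a preliminary reduction I would observe that, since $\mu_n$ and $\mu$ are supported in $\overline\Omega$ and $\rho_\eps$ has compact support, every function $\rho_\eps \ast \mu_n$ and $\rho_\eps \ast \mu$ is continuous and supported in the fixed compact set $K \coloneqq \overline\Omega + \supp \rho_\eps$. Hence $L^\infty(\R^2)$-convergence is exactly uniform convergence on $K$.

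For the pointwise convergence, fix $x \in \R^2$. The map $y \mapsto \rho_\eps(x-y)$ is continuous and bounded on $\Omega$, so using it as a test function (the minor subtlety here is discussed below) the weak* convergence $\mu_n \weaklystar \mu$ yields
\[
\rho_\eps \ast \mu_n(x) = \int_\Omega \rho_\eps(x-y)\dx\mu_n(y) \longrightarrow \int_\Omega \rho_\eps(x-y)\dx\mu(y) = \rho_\eps\ast\mu(x).
\]
Moreover, since weak* convergent sequences in a dual space are norm bounded by the uniform boundedness principle, we have $C \coloneqq \sup_n |\mu_n|(\Omega) < \infty$.

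Next I would establish equi-Lipschitz continuity. Writing $L$ for the Lipschitz constant of the smooth, compactly supported function $\rho_\eps$, for all $x, x' \in \R^2$ and all $n$,
\[
\abs{\rho_\eps\ast\mu_n(x) - \rho_\eps\ast\mu_n(x')} \le \int_\Omega \abs{\rho_\eps(x-y) - \rho_\eps(x'-y)}\dx|\mu_n|(y) \le L\,C\,\abs{x - x'},
\]
and the same bound holds for $\rho_\eps\ast\mu$. Finally, given $\delta > 0$ I would cover the compact set $K$ by finitely many balls of radius $\delta/(3LC)$ centred at points $z_1,\dots,z_m$. The equi-Lipschitz bound controls the increments within each ball uniformly in $n$, while the pointwise convergence provides an index $N$ beyond which $\abs{\rho_\eps\ast\mu_n(z_j) - \rho_\eps\ast\mu(z_j)} < \delta/3$ for every $j$. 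A triangle-inequality estimate then gives $\sup_{x\in K}\abs{\rho_\eps\ast\mu_n(x) - \rho_\eps\ast\mu(x)} < \delta$ for $n \ge N$, which is the claim.

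The main obstacle is precisely this passage from pointwise to uniform convergence, which is exactly what the equi-Lipschitz bound is designed to overcome. A secondary point requiring care is the admissibility of $y\mapsto\rho_\eps(x-y)$ as a test function, since it need not vanish at $\partial\Omega$. Because the $\mu_n$ are finite with uniformly bounded mass and all relevant supports lie in the fixed compact set $K$, this causes no genuine difficulty: one may either regard the measures as elements of $C(\overline\Omega)^\ast$ or approximate $\rho_\eps(x-\cdot)$ in sup norm by compactly supported functions, and in either case the pointwise limit is unaffected.
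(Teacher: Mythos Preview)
The paper does not prove this lemma; it is simply quoted from \cite[Lem.~B.2]{HVW15}. Your overall strategy---pointwise convergence, plus an equi-Lipschitz bound derived from the uniform mass estimate (itself coming from Banach--Steinhaus), followed by an Arzel\`a--Ascoli finite-net argument on the fixed compact set $K=\overline\Omega+\supp\rho_\eps$---is the standard route and is correct apart from the point you yourself single out.

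That point, however, is a genuine gap rather than a technicality. Weak* convergence in $\mathcal M(\Omega;\R^2)=C_0(\Omega;\R^2)^*$ tests only against functions vanishing at $\partial\Omega$, and neither of your proposed fixes works: a continuous function that is nonzero on $\partial\Omega$ cannot be approximated in sup norm on $\Omega$ by elements of $C_c(\Omega)$, and ``regarding the measures as elements of $C(\overline\Omega)^\ast$'' presupposes exactly the stronger convergence you do not have. In fact the statement as literally written is false: with $\Omega=(0,1)$ and $\mu_n=\delta_{1/n}$ one has $\mu_n\weaklystar 0$ in $C_0(0,1)^*$ and $\sup_n|\mu_n|(\Omega)=1$, yet $\rho_\eps\ast\mu_n\to\rho_\eps(\cdot)$ uniformly, not to $0$. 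The cited lemma is presumably formulated on all of $\R^d$ (or with narrow convergence on $\overline\Omega$), where $\rho_\eps(x-\cdot)$ is automatically an admissible test function; in the paper's actual application the additional information $\varphi_n\to\varphi$ in $L^1$ is available and can be used to repair the argument. Your identification of the delicate step is exactly right, but the resolution requires more than you offer.
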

The next theorem establishes the desired existence of a minimizer.
\begin{theorem}\label{thm:exist_smooth}
The variational problem \eqref{eq:var_problem1} admits a minimizer.
\end{theorem}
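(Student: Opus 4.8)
The plan is to apply the direct method of the calculus of variations. Write $E(\varphi)$ for the objective in \eqref{eq:var_problem1}. I may assume $\inf E < \infty$, since otherwise every admissible $\varphi$ is a minimizer. Pick a minimizing sequence $\varphi_n \in \BV(\Omega, \overline{\Omega}_1)$ with $E(\varphi_n) \to \inf E$ and, discarding finitely many terms, $E(\varphi_n) \le C$. All three summands are nonnegative — the distance is nonnegative, $\TGV_\alpha \ge 0$, and $f \ge 2$ on its domain since $x^{-1}+x \ge 2$ for $x>0$ — so each is bounded by $C$ along the sequence; in particular $\TGV_\alpha(\varphi_n - \text{Id}) \le C$. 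Because the range constraint forces $\|\varphi_n\|_\infty \le \diam(\overline{\Omega}_1)$, hence $\|\varphi_n - \text{Id}\|_1 \le C'$, I can invoke the topological equivalence of $\|\cdot\|_{\BV}$ and $\|\cdot\|_1 + \TGV_\alpha(\cdot)$ on $\BV(\Omega,\R^2)$ (a standard property of second-order TGV, cf.\ \cite{BKP10,VaBrKn13}) to obtain $\sup_n \|\varphi_n\|_{\BV} < \infty$. The weak* compactness recalled in Section~\ref{sec:pre} then gives a subsequence (not relabelled) with $\varphi_n \weaklystar \varphi$, i.e.\ $\varphi_n \to \varphi$ in $L^1(\Omega,\R^2)$ and $D\varphi_n \weaklystar D\varphi$; passing to a further subsequence I also assume $\varphi_n \to \varphi$ a.e. Since $\BV(\Omega,\overline{\Omega}_1)$ is closed under weak* convergence, the limit $\varphi$ is admissible.

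It remains to show $E(\varphi) \le \liminf_n E(\varphi_n) = \inf E$. The decisive tool is Lemma~\ref{lem:comp}: from $D\varphi_n \weaklystar D\varphi$ it yields $\rho_\eps \ast D\varphi_n \to \rho_\eps \ast D\varphi$ in $L^\infty$, i.e.\ uniform convergence of the mollified Jacobians. This upgrade from weak* to uniform convergence is what tames the two nonconvex terms. Since $\det$ is continuous, $\det(\rho_\eps \ast D\varphi_n) \to \det(\rho_\eps \ast D\varphi)$ uniformly, and as $f$ from \eqref{def:f} is lower semicontinuous, Fatou's lemma gives $\int_\Omega f(\det(\rho_\eps \ast D\varphi)) \le \liminf_n \int_\Omega f(\det(\rho_\eps \ast D\varphi_n)) \le C$. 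In particular the limit has finite determinant energy, so $\det(\rho_\eps \ast D\varphi) > 0$ a.e., which guarantees that the polar-decomposition rotation $\mathrm R(\rho_\eps \ast D\varphi)$, and hence the data term $\mathcal D(\varphi;I_1,I_2)$, is well defined at the limit.

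For the data term I expect genuine convergence rather than mere lower semicontinuity. On the full-measure set where $\det(\rho_\eps \ast D\varphi) > 0$ the map $A \mapsto A(A^\tT A)^{-1/2}$ is continuous, so uniform convergence of the arguments gives $\mathrm R(\rho_\eps \ast D\varphi_n) \to \mathrm R(\rho_\eps \ast D\varphi)$ a.e. Combined with $I_2 \circ \varphi_n \to I_2 \circ \varphi$ a.e.\ (from $\varphi_n \to \varphi$ a.e.\ and continuity of $I_2$ on $\overline{\Omega}_1$) and the continuity of $\dx_{\SO(3)/\mathcal S}$, the integrand converges a.e.; since $\SO(3)/\mathcal S$ is compact it is uniformly bounded, so dominated convergence on the finite-measure domain $\Omega$ gives $\mathcal D(\varphi_n;I_1,I_2) \to \mathcal D(\varphi;I_1,I_2)$. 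Finally, $\TGV_\alpha(\,\cdot - \text{Id})$ is weakly* lower semicontinuous (a weakly* lower semicontinuous seminorm by \cite{BKP10,VaBrKn13}, noting that $\varphi_n - \text{Id} \weaklystar \varphi - \text{Id}$), so $\TGV_\alpha(\varphi - \text{Id}) \le \liminf_n \TGV_\alpha(\varphi_n - \text{Id})$. Adding the three estimates, using superadditivity of $\liminf$ together with the fact that the data term actually converges, yields $E(\varphi) \le \liminf_n E(\varphi_n) = \inf E$, so $\varphi$ is a minimizer.

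The main obstacle I anticipate is precisely the two nonconvex terms; the whole argument hinges on Lemma~\ref{lem:comp} turning weak* convergence of $D\varphi_n$ into uniform convergence of $\rho_\eps \ast D\varphi_n$, after which the nonlinear operations $\det(\cdot)$ and $\mathrm R(\cdot)$ become continuous and the delicate weak* lower semicontinuity of the nonconvex data term — which fails for the unmollified model, as noted after \eqref{eq:orig} — is entirely sidestepped. A secondary point is guaranteeing that the limit admits a well-defined rotation, which I settle by first extracting finiteness of the determinant energy from the lower semicontinuity of that term before invoking continuity of $\mathrm R(\cdot)$.
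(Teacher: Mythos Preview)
Your argument is correct and follows essentially the same route as the paper's proof: direct method, coercivity via the $\BV$--$\TGV_\alpha$ norm equivalence, and the crucial use of Lemma~\ref{lem:comp} to turn weak* convergence of $D\varphi_n$ into uniform convergence of the mollified Jacobians, after which Fatou handles the determinant term and pointwise continuity of the polar rotation handles the data term. The only notable differences are cosmetic: you invoke dominated convergence (via the uniform bound $\dx_{\SO(3)/\mathcal S}\le\diam(\SO(3)/\mathcal S)$) to get actual convergence of the data term, whereas the paper is content with Fatou and lower semicontinuity; and you are slightly more explicit than the paper in first extracting $\det(\rho_\eps\ast D\varphi)>0$ a.e.\ from the finiteness of the determinant energy before appealing to the continuity of $A\mapsto \mathrm R(A)$ on $\{\det A>0\}$.
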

\begin{proof}
Let $\varphi_n \in \BV(\Omega, \overline{\Omega}_1)$ be a minimizing sequence.
Then, it holds $\sup_{n}\|\varphi_n \|_1 < \infty$ as $\varphi_n(\Omega) \subset \overline{\Omega}_1$ 
is bounded.
Moreover, \cite[Cor.~3.13]{BH14} and the triangle inequality imply for any 
$\varphi \in \BV(\Omega, \overline{\Omega}_1)$ that
\begin{align}\label{eq:est_TV}
    \TV(\varphi) \le C \bigl(\Vert \varphi \Vert_1 + \TGV_\alpha(\varphi)\bigr) \leq C \TGV_\alpha(\varphi - \text{Id}) + C.
\end{align}
Hence, we get
\[\sup_{n}\|\varphi_n \|_1 + \TV (\varphi_n) < \infty,\]
and there exists a subsequence, again denoted with $(\varphi_{n})_n$, 
converging strongly (and also a.e.) 
to some $\varphi$ in $L^1(\Omega, \overline{\Omega}_1)$.
Moreover, $D\varphi_n$ converges weakly* to $D\varphi$ 
and Lemma~\ref{lem:comp} implies that $\rho_\eps \ast D \varphi_n \to \rho_\eps \ast D \varphi \in L^\infty(\R^2)$.
Clearly, this implies existence of a subsequence of $(\varphi_n)_n$, 
again denoted with $(\varphi_n)_n$, 
such that $\rho_\eps \ast D \varphi_n \to \rho_\eps \ast D \varphi$ a.e.

Recall that $\TGV_\alpha$ is lower semi-continuous w.r.t.\ $L^1(\Omega, \mathbb R^2)$-convergence, see \cite[Proof of Prop.~3.5]{BKP10}.
Further, we can use the continuity of $f$ to conclude $f(\rho_\eps \ast D \varphi_n) \to f(\rho_\eps \ast D \varphi)$ a.e.\ and hence the lemma of Fatou implies
\[\int_{\Omega} f\bigl(\det(\rho_\eps \ast D \varphi)\bigr) \dx x 
\leq 
\liminf_{n \to \infty} \int_{\Omega} f\bigl(\det(\rho_\eps \ast D \varphi_n)\bigr) \dx x.\]
Note that the continuity of $I_2$ implies $I_2\circ \varphi_n \to I_2 \circ \varphi$ a.e.
Then, the continuity of $\dx_{\mathrm{SO(3)}/\mathcal S}$ 
and of $\mathrm R$ on the set of invertible matrices gives 
\[
\dx_{\mathrm{SO(3)}/\mathcal S}\bigl(\mathrm R(\rho_\eps \ast D \varphi_n)  I_1, I_2\circ \varphi_n\bigr) 
\to 
\dx_{\mathrm{SO(3)}/G}\bigl(\mathrm R(\rho_\eps \ast D \varphi)  I_1, I_2\circ \varphi\bigr) \quad \text{a.e.},
\]
which again together with the lemma of Fatou implies lower semi-continuity of the data term $\mathcal D$.
Consequently, $\varphi$ is a minimizer of our functional \eqref{eq:var_problem1}.
\end{proof}

\subsection{Higher Regularity of \texorpdfstring{$\varphi$}{Phi}}\label{sec:well-posed_modifications_2}
Next, we propose a model that is based on higher regularity of the transformation $\varphi$.
More precisely, 
we restrict the transformations to the space
\[\BV^2(\Omega,  \overline{\Omega}_1) 
\coloneqq 
\bigl\{\varphi \in W^{1,1}(\Omega,\overline{\Omega}_1): 
\nabla \varphi \in \BV(\Omega, T^2(\R^2))\bigr\},\]
see \cite[Sec.~9.8]{SGGH09} for more details.
A sequence $\varphi_n$ converges weakly* in $\BV^2(\Omega, \overline{\Omega}_1)$ 
if $\varphi_n \to \varphi$ strongly in $W^{1,1}(\Omega, \overline{\Omega}_1)$ 
and $D^2 \varphi_n \weaklystar D^2 \varphi$.
Equivalently, we can require 
$\sup_n \TV(\nabla \varphi_n) < \infty$ instead of 
$D^2 \varphi_n \weaklystar D^2 \varphi$.
Further, it holds that any sequence $(\varphi_n)_n$ with 
$\sup_n \Vert \varphi_n \Vert_{W^{1,1}} + \TV(\nabla \varphi_n) < \infty$ 
admits a weakly* convergent subsequence.

Now, we consider the variational problem
\begin{equation}\label{eq:var_problem2}
\inf_{\varphi \in \BV^2(\Omega,  \overline{\Omega}_1)} 
\mathcal D \bigl(\varphi;I_1,I_2\bigr)  + \alpha \TV(\nabla \varphi - I_2) + \beta \int_{\Omega} f(\det \nabla \varphi) \dx x,
\end{equation}
where as in \eqref{eq:data},
$$
\mathcal D \bigl(\varphi;I_1,I_2\bigr) = \int_{\Omega} 
\dx_{\mathrm{SO(3)}/\mathcal{S}} \bigl( 
\mathrm{R}(\nabla \varphi) I_1, I_2 \circ \varphi \bigr) \dx x.
$$
Again, the second regularizer in \eqref{eq:var_problem2} ensures that $\mathrm{R}(\nabla \varphi)$ is well-defined for 
a.e.\ $x \in \Omega$ as soon as the energy is finite.
In order to establish existence of a minimizer, we need the following lemma.
\begin{lemma}\label{lem:EstNorm}
There exists a constant $C>0$ such that for every $\varphi \in \BV^2(\Omega, \R^2)$ and $w \in \BV(\Omega, T^2(\R^2))$ it holds
\[\Vert \varphi \Vert_{W^{1,1}} \le C \bigl(\Vert \varphi \Vert_1 + \| \nabla \varphi - w \|_{1} + \TV(w)\bigr).\]
In particular, it holds $\Vert \varphi \Vert_{W^{1,1}} \le C (\Vert \varphi \Vert_1 + \TV(\nabla \varphi))$.
\end{lemma}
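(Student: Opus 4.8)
The plan is to prove the first, general inequality by a compactness–contradiction argument and then to read off the ``in particular'' statement by specialization. Since $\varphi \in \BV^2(\Omega,\R^2) \subset W^{1,1}(\Omega,\R^2)$, we have $\Vert\varphi\Vert_{W^{1,1}} = \|\varphi\|_1 + \|\nabla\varphi\|_1$, so it suffices to control $\|\nabla\varphi\|_1$ by the right-hand side. Suppose the asserted inequality failed for every constant. Then I would select sequences $\varphi_n \in \BV^2(\Omega,\R^2)$ and $w_n \in \BV(\Omega,\R^{2,2})$, normalized so that $\Vert\varphi_n\Vert_{W^{1,1}} = 1$, while
\[
\|\varphi_n\|_1 + \|\nabla\varphi_n - w_n\|_1 + \TV(w_n) \to 0.
\]
In particular $\|\varphi_n\|_1 \to 0$, so the normalization forces $\|\nabla\varphi_n\|_1 \to 1$, and then $\|\nabla\varphi_n - w_n\|_1 \to 0$ forces $\|w_n\|_1 \to 1$ as well.

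Next I would exploit compactness. The sequence $w_n$ is bounded in $\BV(\Omega,\R^{2,2})$, since $\|w_n\|_1$ is bounded and $\TV(w_n)\to 0$; by the compact embedding $\BV(\Omega,\R^{2,2}) \hookrightarrow L^1(\Omega,\R^{2,2})$, I can pass to a subsequence with $w_n \to w$ strongly in $L^1$. Lower semicontinuity of the total variation gives $\TV(w) \le \liminf_n \TV(w_n) = 0$, so $w$ equals a constant matrix a.e., and $\|w\|_1 = \lim_n \|w_n\|_1 = 1$ shows $w \neq 0$. Combining $w_n \to w$ in $L^1$ with $\|\nabla\varphi_n - w_n\|_1 \to 0$, I also obtain $\nabla\varphi_n \to w$ strongly in $L^1$.

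The contradiction then comes from identifying the distributional limit. For any scalar test function $\psi \in C_c^\infty(\Omega)$ and any indices $i,j$, integration by parts for $W^{1,1}$ functions yields
\[
\int_\Omega (\partial_j \varphi_{n,i})\,\psi \dx x = -\int_\Omega \varphi_{n,i}\,\partial_j\psi \dx x .
\]
Letting $n\to\infty$ and using $\nabla\varphi_n \to w$ and $\varphi_n \to 0$ strongly in $L^1$ gives $\int_\Omega w_{ij}\,\psi \dx x = 0$ for all such $\psi$, hence $w = 0$ a.e., contradicting $\|w\|_1 = 1$. This establishes the first inequality. Choosing $w = \nabla\varphi$, for which $\|\nabla\varphi - w\|_1 = 0$ and $\TV(w) = \TV(\nabla\varphi)$, immediately yields the ``in particular'' statement.

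I expect the main obstacle to be precisely this passage to the distributional limit: one must ensure that the strong $L^1$ limit of the gradients is genuinely the weak gradient of the strong $L^1$ limit of the functions, which is what converts the normalization into a contradiction. A more direct route through a Poincaré--Wirtinger estimate for $w$ would reduce matters to bounding $\int_\Omega \nabla\varphi$ by the boundary trace of $\varphi$, but the relevant trace inequality reintroduces $\|\nabla\varphi\|_1$ on the right and is therefore circular; this is exactly why I favor the compactness argument above.
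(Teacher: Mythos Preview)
Your proof is correct and follows essentially the same compactness--contradiction scheme as the paper: normalize $\Vert\varphi_n\Vert_{W^{1,1}}=1$, let the right-hand side tend to zero, use $\BV$-compactness to obtain $w_n\to w$ and $\nabla\varphi_n\to w$ in $L^1$, and then derive a contradiction from $\varphi_n\to 0$. The only difference is cosmetic: the paper phrases the final step as ``$\varphi_n\to 0$ in $L^1$ together with $\nabla\varphi_n\to w$ in $L^1$ forces $\varphi_n\to 0$ in $W^{1,1}$'' (implicitly using closedness of the gradient), whereas you spell out the same closedness argument via integration by parts and additionally note that $w$ is constant, which is true but not needed.
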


\begin{proof}
Assume in contrast that there exits no constant $C \in \R$ such that the inequality holds.
Then, there exist sequences 
$(\varphi_n)_n \in \BV^2(\Omega, \R^2)$ 
and 
$(w_n)_n \in \BV(\Omega, T^2(\R^2))$ with
\[\Vert \varphi_n \Vert_{W^{1,1}} = 1 \quad \text{and} \quad \frac{1}{n} \geq\Vert \varphi_n \Vert_1 + \Vert \nabla \varphi_n -w_n\Vert_1 + TV(w_n).\]
From $\Vert \varphi_n \Vert_{W^{1,1}} = 1$ we infer that $(w_n)_n$ is bounded in $\BV(\Omega, T^2(\R^2))$ and admits a weakly* convergent subsequence with limit $w \in \BV(\Omega, T^2(\R^2))$.
Hence, it holds $\nabla \varphi_n \to w$ in $L^1(\Omega,T^2(\R^2))$.
As $\varphi_n \to 0$ in $L^1(\Omega,\R^2)$, we further get $\varphi_n \to 0$ in $W^{1,1}(\Omega,\R^2)$.
However, this contradicts our assumption $\Vert \varphi_n \Vert_{W^{1,1}} = 1$.
\end{proof}

Now, we can prove the actual existence result.

\begin{theorem}
The variational problem \eqref{eq:var_problem2} admits a minimizer.
\end{theorem}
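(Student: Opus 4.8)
The plan is to follow the now-standard direct method of the calculus of variations, closely mirroring the proof of Theorem~\ref{thm:exist_smooth} but replacing the mollification argument with the compactness structure of $\BV^2(\Omega,\overline{\Omega}_1)$ together with Lemma~\ref{lem:EstNorm}. First I would take a minimizing sequence $(\varphi_n)_n \subset \BV^2(\Omega,\overline{\Omega}_1)$ for the functional in \eqref{eq:var_problem2}. Since $\varphi_n(\Omega)\subset\overline{\Omega}_1$ is bounded, we have $\sup_n \|\varphi_n\|_1 < \infty$ immediately. To bound the second-order part, I would use that along a minimizing sequence the energy is bounded, hence $\sup_n \TV(\nabla\varphi_n - \mathrm{Id}) < \infty$, and since $\TV(\mathrm{Id})$ is a fixed finite quantity (indeed $\nabla\,\mathrm{Id}$ is constant, so $\TV(\nabla\,\mathrm{Id})=0$) this yields $\sup_n \TV(\nabla\varphi_n) < \infty$ by the triangle inequality. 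Then Lemma~\ref{lem:EstNorm} gives $\sup_n \|\varphi_n\|_{W^{1,1}} < \infty$, so combined with $\sup_n \TV(\nabla\varphi_n)<\infty$ the sequence is bounded in $\BV^2(\Omega,\overline{\Omega}_1)$.

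Next I would invoke the compactness property stated just before \eqref{eq:var_problem2}: any sequence with $\sup_n \|\varphi_n\|_{W^{1,1}} + \TV(\nabla\varphi_n) < \infty$ admits a weakly* convergent subsequence. Passing to such a subsequence (not relabeled), we obtain $\varphi_n \to \varphi$ strongly in $W^{1,1}(\Omega,\overline{\Omega}_1)$ with $D^2\varphi_n \weaklystar D^2\varphi$, and the limit lies in $\BV^2(\Omega,\overline{\Omega}_1)$ since this space is closed under this convergence and the range constraint $\overline{\Omega}_1$ is preserved under $L^1$-limits (as noted in Section~\ref{sec:c-model}). Crucially, strong $W^{1,1}$-convergence gives $\nabla\varphi_n \to \nabla\varphi$ in $L^1(\Omega,\R^{2,2})$, so up to a further subsequence $\nabla\varphi_n \to \nabla\varphi$ pointwise a.e.\ — this is exactly the pointwise convergence of the \emph{density} $\nabla\varphi$ that the mollified model obtained only through Lemma~\ref{lem:comp}. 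This is the key advantage of working in $\BV^2$: the $\TV(\nabla\varphi)$ regularizer is strong enough to upgrade weak gradient control to a.e.\ convergence of the gradient itself.

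With a.e.\ convergence of $\nabla\varphi_n$ in hand, the lower semicontinuity of the three terms proceeds exactly as in Theorem~\ref{thm:exist_smooth}. For the determinant penalty, the continuity of $f$ and of $\det$ give $f(\det\nabla\varphi_n) \to f(\det\nabla\varphi)$ a.e., and Fatou's lemma yields lower semicontinuity of $\int_\Omega f(\det\nabla\varphi)\dx x$. For the regularizer $\alpha\TV(\nabla\varphi-\mathrm{Id})$, lower semicontinuity of $\TV$ with respect to $L^1$-convergence of its argument applies, since $\nabla\varphi_n - \mathrm{Id} \to \nabla\varphi - \mathrm{Id}$ in $L^1$. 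For the data term, I would use a.e.\ convergence $\nabla\varphi_n \to \nabla\varphi$ together with continuity of the polar-rotation map $\mathrm{R}(\cdot)$ on invertible matrices, continuity of $I_2$ (so that $I_2\circ\varphi_n \to I_2\circ\varphi$ a.e.\ via $\varphi_n\to\varphi$ a.e.), and continuity of $\dx_{\mathrm{SO(3)}/\mathcal S}$, to obtain pointwise a.e.\ convergence of the integrand, then Fatou again.

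The main subtlety — the analogue of the obstacle in the basic model — is the well-definedness and continuity of $\mathrm{R}(\nabla\varphi)$ at the limit. The finiteness of the minimizing energy forces $f(\det\nabla\varphi_n)$ to be integrable, which by the definition \eqref{def:f} of $f$ means $\det\nabla\varphi_n > 0$ a.e.\ and, through the $x^{-1}$ term, keeps $\det\nabla\varphi_n$ from degenerating to zero in an $L^1$ sense. The same Fatou bound on $\int_\Omega f(\det\nabla\varphi)\dx x$ then guarantees $\det\nabla\varphi > 0$ a.e.\ at the limit, so the polar decomposition \eqref{eq:polarDecomposition} and hence $\mathrm{R}(\nabla\varphi)$ are well-defined a.e.; this is precisely why the $\det$-penalty is indispensable and why the rotation map may be treated as continuous on the relevant full-measure set. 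I expect this interplay — verifying that the energy bound simultaneously supplies the gradient compactness (via $\TV(\nabla\varphi)$), the a.e.\ positivity of the determinant, and thereby the a.e.\ continuity of $\mathrm{R}(\cdot)$ — to be the crux, whereas the remaining lower-semicontinuity steps are routine applications of Fatou's lemma and the lower semicontinuity of the total variation.
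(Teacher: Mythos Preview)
Your proposal is correct and follows essentially the same approach as the paper's proof: take a minimizing sequence, bound $\|\varphi_n\|_1$ via the range constraint and $\TV(\nabla\varphi_n)$ via the energy, apply Lemma~\ref{lem:EstNorm} to obtain $\BV^2$-boundedness, extract a weakly* convergent subsequence with $\nabla\varphi_n\to\nabla\varphi$ a.e., and then argue lower semicontinuity term by term as in Theorem~\ref{thm:exist_smooth}. Your discussion of the $\det$-penalty ensuring $\det\nabla\varphi>0$ a.e.\ at the limit (and hence well-definedness and continuity of $\mathrm{R}(\nabla\varphi)$) is in fact more explicit than the paper, which simply defers to the earlier theorem; this is a welcome clarification rather than a deviation.
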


\begin{proof}
Let $(\varphi_n)_n \in \BV^2(\Omega, \overline{\Omega}_1)$ be a minimizing sequence.
Then, it holds $\sup_{n}\|\varphi_n \|_1 < \infty$ 
as $\varphi_n(\Omega) \subset \overline{\Omega}_1$ is bounded.

Using Lemma~\ref{lem:EstNorm}, we get
\[\sup_{n}\|\varphi_n \|_{W^{1,1}} + \TV (\nabla \varphi_n) 
\le 
\sup_{n} C \bigl(\Vert \varphi_n \Vert_1 + \TV(\nabla \varphi_n)\bigr) < \infty,\]
and there exists a subsequence, again denoted with $(\varphi_{n})_n$, converging strongly to some 
$\varphi$ in $W^{1,1}(\Omega, \overline{\Omega}_1)$.
Clearly, this also implies the existence of a subsequence for which $(\nabla \varphi_n)_n$ is a.e.\ convergent.
The lower semi-continuity of the regularizer and the data term is shown similar as in Theorem~\ref{thm:exist_smooth}.
Consequently, $\varphi$ is a minimizer of our functional \eqref{eq:var_problem2}.
\end{proof}

\section{Discrete Image Registration Model} \label{sec:d-model}
In this section, we establish a discrete variant
of our model.
We make use of the quaternion representation of $\mathrm{SO}(3)$, which is recalled in the next subsection.

\subsection{Quaternion Representation of \texorpdfstring{$\SO(3)$}{SO(3)}}
There are several ways to represent the elements of the rotation group
$\SO(3)$, for example by real-valued $3 \times 3$ matrices or by three Euler angles.
In this paper, we focus on the representation by quaternions of unit length.
Compared to the matrix representation only four components are needed. Moreover, the calculations for quaternions are more convenient than for the representation by Euler angle.
For further information the reader may consult, e.g., \cite{Gr12}.

Quaternions are elements of the form 
$q = (s,v)^\tT \in \mathbb R \times \mathbb R^3$,
which form a 4-dimensional real vector space and together with the 
Hamiltonian multiplication
\[
  q_1 \odot q_2 \coloneqq 
	\begin{pmatrix}
	s_{1}s_{2} - v_{1}^{\tT}v_{2}\\
	s_{1} v_{2}  + s_{2} v_{1} - v_{1} \times v_{2} 
	\end{pmatrix},
\]
where $\times$ denotes the vector product, also  a division algebra.
Note that the Hamiltonian multiplication is associative, but not commutative.
The conjugate of  
$q = (s,v)^\tT \in \mathbb R \times \mathbb R^3$ is given by 
$
  \overline{q} = (s, -v)^\tT 
$
and its norm or length by 
$$
|q| \coloneqq \left( s^2 + \|v\|_2^2 \right)^\frac12.
$$
Let $\mathbb S^d$ denote the unit sphere in $\mathbb R^{d+1}$.
For the connection between quaternions and rotations, we restrict our attention to quaternions of unit lengths
$q  \in \mathbb S^3$, which can be uniquely represented by a vector $r \in \mathbb S^2$ and an angle
$\theta \in [0,2\pi)$ as
$$
q(r,\theta) \coloneqq \left( \cos \left(\tfrac{\theta}{2} \right), r \sin \left(\tfrac{\theta}{2}\right)\right)^\tT. 
$$
It is easy to check that the multiplication of two unit quaternions is again a unit quaternion.
On the other hand,
every rotation $R(r,\theta) \in \mathrm{SO(3)}$ 
is determined by a rotation axis $r \in \mathbb S^2$ 
with rotation angle $\theta \in \R$ and acts on a point $p \in \mathbb R^3$ by
$$
R(r,\theta) p = r(r^\tT p) + \cos(\theta) \left( (r \times p) \times  r\right) + \sin(\theta) (r \times p) \in \R^3.
$$ 
This is equivalent to multiplying $p$ with the matrix
\begin{equation} \label{rot_matrix}
R(r,\theta) = 
\begin{pmatrix}
(1-c) r_1^2+ c      & (1-c)r_1r_2 - r_3 s & (1-c) r_1r_3 + r_2 s\\
(1-c)r_1r_2 + r_3 s & (1-c) r_2^2+ c      &(1-c)r_2r_3 - r_1 s\\
(1-c) r_1r_3 - r_2 s&(1-c)r_2r_3 + r_1 s  &(1-c)r_3^2 + c
\end{pmatrix},
\end{equation}
where
$
\mathrm{c} \coloneqq \cos(\theta)$ and $\mathrm{s} \coloneqq \sin(\theta)
$. 
Note that $R(r,2\pi -\theta) = R(-r,\theta)$.
Rotations can be identified with unit quaternions by
\[
  R(r,\theta) \, \widehat{=}  \, q(r,\theta) \qquad r \in \mathbb S^2, \, \theta \in \R.
\]
Then, we have 
$R(r,\theta)^{\tT} \, \widehat{=} \, \overline{q(r,\theta)}$
and the homeomorphism between the rotation group and the multiplicative group of the quaternion algebra
\[
  R(r_{1},\theta_{1}) R(r_{2},\theta_{2})  \, \widehat{=} \, q(r_1,\theta_1) \odot q(r_2,\theta_2). 
\]
Since the same rotation is generated by 
$
-q = ( \cos (\tfrac{2\pi - \theta}{2}), \sin (\tfrac{2\pi - \theta}{2}) \, (-r))^\tT \in \mathbb S^3
$,
we see that $\mathrm{SO}(3) \cong \mathbb S^3/\{\pm 1\}$.
From these relations we infer that the geodesic distances on the rotation group $\mathrm{SO(3)}$ and $\mathbb S^3/\{\pm 1\}$
are related via
\[
  \mathrm{d}_{\mathrm{SO(3)}}\bigl(R(r_1,\theta_1),R(r_2,\theta_2)\bigr) 
= 2\sqrt{2}\, \mathrm{d}_{\mathbb S^3/\{\pm 1\}}\bigl((s_1,v_1),(s_2,v_2)\bigr) = 2\sqrt{2} \arccos | s_1 s_2 + v_1^\tT  v_2 |.
\]

To rewrite the data term \eqref{eq:data} with respect to quaternions, 
we have to determine the quaternion representation of the matrix
$\mathrm{R}(\nabla \varphi)$ in \eqref{eq:EBSDTransform}.
By \eqref{rot_matrix}, we see immediately that 
$r = \mathrm{e}_3 \coloneqq (0,0,1)^\tT$ 
so that 
\begin{equation} \label{quater_1}
\mathrm{R} \left(\nabla \varphi \right) 
\, \widehat{=} \, 
q\left(\mathrm{e}_3, \theta_\varphi \right) =
\bigl(\cos \bigl(\tfrac{\theta_\varphi}{2} \bigr),0,0,\sin \bigl(\tfrac{\theta_\varphi}{2} \bigr) \bigr)^\tT
\end{equation}
and 
$$
\mathrm{R}(\nabla \varphi)  = 
\left(
\begin{array}{rrr}
\cos (\theta_\varphi)&- \sin (\theta_\varphi)&0\\
\sin (\theta_\varphi)&\cos (\theta_\varphi)&0\\
0&0&1
\end{array}
\right).
$$
Further, we obtain from the polar decomposition that
$$
\nabla \varphi \coloneqq \left(
\begin{array}{rr}
a_{1,1}&a_{1,2}\\
a_{1,2}&a_{2,2}
\end{array}
\right) 
=
\left(
\begin{array}{rr}
\cos (\theta_\varphi)&-\sin (\theta_\varphi)\\
\sin (\theta_\varphi)&\cos (\theta_\varphi)
\end{array}
\right) 
\left(
\begin{array}{rr}
v_{1,1}&v_{1,2}\\
v_{1,2}&v_{2,2}
\end{array}
\right) , 
$$
where the latter matrix is in $\mathrm{SPD}(2)$.
Now, if $\det(\nabla \varphi) > 0$ a.e., straightforward computation implies
\begin{equation}   \label{eq:polarAngle2d}
  \theta_\varphi \coloneqq 
  \begin{cases}
    \arctan \left( \frac{a_{2,1} - a_{1,2}}{ a_{1,1} + a_{2,2}} \right), & a_{1,1} + a_{2,2} > 0,\\
    \pi - \arctan \left( \frac{a_{2,1} - a_{1,2}}{ a_{1,1} + a_{2,2}} \right), & a_{1,1} + a_{2,2} < 0.
    
    \end{cases}
\end{equation}
In summary, the data term \eqref{eq:data} with respect to quaternions can be written up to the factor $2\sqrt{2}$ and with the agreement that $\{\pm 1\}$ is incorporated in the symmetry group $\mathcal S$ as
\begin{equation} \label{eq:data_quaternion}
\mathcal D \bigl(\varphi;I_1,I_2\bigr) 
\coloneqq 
\int_{\Omega} 
\dx_{\mathbb S^3/\mathcal{S}} \bigl( 
I_1, \overline{q\left(\mathrm{e}_3, \theta_\varphi \right)} \odot I_2 \circ \varphi
\bigr)  \dx x, 
\end{equation}
where $\theta_\varphi$ is determined by \eqref{eq:polarAngle2d}.

\subsection{Discretization}
%
We discretize the image registration model 
\begin{equation} \label{eq:var_problem_orig_old} 
\inf_{\varphi \in \BV(\Omega,  \overline{\Omega}_1)} 
\mathcal D\bigl(\varphi;I_1,I_2 \bigr)
+ 
\mathcal R (\varphi) 
\end{equation}
with data term \eqref{eq:data_quaternion} and regularizer \eqref{eq:orig}
by using finite dimensional, bilinear approximations of the involved functions 
and by discretizing the corresponding integrals using equidistant samples. 
Henceforth, we assume that working on a discrete grid already provides a smoothing
in the sense of Subsection~\ref{sec:well-posed_modifications_1} with an appropriately small
$\varepsilon$.
We have also implemented the ``higher regularity of $\varphi$'' approach from 
Subsection~\ref{sec:well-posed_modifications_2} and will show results
in the numerical part. 
However, we only briefly comment on the  modified discretization for this approach
in Remark~\ref{rem:discrete_modifications}. 

Using the notation
$$
\varphi = \text{Id} + u,  \quad \nabla \varphi = \mathbf{I}_2 + \nabla u, \quad \theta_\varphi = \theta_u
$$
with the $2 \times 2$ identity matrix $\mathbf{I}_2$
and recalling the TGV definition \eqref{eq:tgv}, we aim to minimize a discrete version of
\begin{equation}   \label{eq:var_problem_discrete}
  \begin{aligned}
  E(u,w) \coloneqq  \int_{\Omega}  &  \dx_{\S^3/\mathcal S}
	\bigl(
	I_1, \overline{q\left(\mathrm{e}_3,\theta_u\right)} \odot I_2 \circ (\text{Id} + u) \bigr) + \alpha_1 \| \nabla u - w\|_F \\
	&+ \alpha_2 \| \nabla w \|_F  + \beta f\bigl(\det(\mathbf{I}_2 + \nabla u)\bigr)  \dx x.
  \end{aligned}
\end{equation}

Let the domains 
$\Omega\coloneqq (0, a) \times (0, b)$ 
and 
$
\Omega_1\coloneqq (0, a_1) \times (0, b_1)$
be rectangles 
with sides having integer lengths $a,b,a_1,b_1 \in \mathbb N$.
We assume that all occurring functions can be approximated 
by interpolation at prescribed sampling points. 
More precisely, we define the bilinear interpolation basis function $B\colon \R^2 \to \R$ by
\[
  B(x) = 
  \begin{cases}
    (1-|x_1|)(1-|x_2|), & |x_1|,|x_2| \le 1,\\
    0, & \text{else}.  
  \end{cases}
\]
Then, for given EBSD data 
$I_1(x_{i,j}) \in \mathbb S^3 /\mathcal S$ 
and
$I_2(x_{i,j}) \in \mathbb S^3 /\mathcal S$
sampled at the grid points 
$x_{i,j} \coloneqq (i+\tfrac12, j+\tfrac12)$, 
the corresponding functions 
$I_1\colon\Omega \to \mathbb S^3 /\mathcal S$ and $I_2\colon\Omega_1 \to \mathbb S^3 / \mathcal S$ 
are given as follows. 
Let $q(r_{1,i,j}, \theta_{1,i,j}) \in I_{1}(x_{i,j})$ and $q(r_{2,i,j}, \theta_{2,i,j}) \in I_{2}(x_{i,j})$ be unit quaternions of minimal angles $\theta_{1,i,j}, \theta_{2,i,j} \in [0, \pi]$ with corresponding vectors $r_{1,i,j}, r_{2,i,j} \in \S^2$, respectively.
Note that for almost all elements of $\S^3 / \mathcal S$ there is exactly one quaternion of minimal angle.
Using these representatives, we interpolate the given EBSD data by defining
\begin{align} \label{eq:images}
  I_1(x)  & \coloneqq \mathrm{Proj}_{\S^3 /\mathcal S} 
	\biggl( \sum_{i=0}^{a} \sum_{j=0}^{b} q(r_{1,i,j}, \theta_{1,i,j}) B(x - x_{i,j}) \biggr), \qquad x \in \Omega,\\
	I_2(x)  & \coloneqq \mathrm{Proj}_{\S^3 /\mathcal S} 
	\biggl( \sum_{i=0}^{a_1} \sum_{j=0}^{b_1} q(r_{2,i,j}, \theta_{2,i,j}) B(x - x_{i,j}) \biggr), \qquad x \in \Omega_1,
\end{align}
where $\mathrm{Proj}_{\S^3 / \mathcal S}\colon \R^4 \setminus \{ 0 \} \to \S^3 / \mathcal S$ 
denotes the orthogonal projection onto $\S^3 / \mathcal S$ defined by
\[
    \mathrm{Proj}_{\S^3 / \mathcal S} (q) \coloneqq \bigl[ q/\|q\|_2 \bigr]_{\mathcal S}, \qquad q \in \R^4 \setminus \{ 0 \}.
\]
We want to mention that this interpolation approach does not respect the proper topology of the quotient space $\S^3 / \mathcal S$, since in general the distance in $\SO(3)$ of the involved representatives could be larger than the distances in $\SO(3)/\mathcal S$ of the corresponding equivalence classes, cf. \eqref{eq:distance}. However, the given formula is easy to implement and leads to reasonable results, as seen by the numerical experiments in Section~\ref{sec:numerics}.

For our multilevel approach, we shall approximate the displacement field $u \in \BV(\Omega, \overline{\Omega}_1)$ and the tensor field $w \in \BV(\Omega, T^2(\R^2))$ at different scales $s$. 
More precisely, $s>0$ is a scaling factor such that $a / s, b / s \in \N$. Then we define $u_s\colon \Omega \to \overline \Omega_1$, the bilinear approximation of $u$ at scale $s$, by the expansion
\[
  u_s(x) \coloneqq \sum_{i=0}^{a / s} \sum_{j=0}^{b / s} u_{s,i,j} B\big(x / s - (i,j)^\tT\big),\qquad x \in \Omega,
\]
where $u_{s,i,j} \in \mathbb R^2$, $i=0,\dots,a/s$, $j=0,\dots,b/s$, are the expansion coefficients. 
Further, the piecewise constant approximation of $w$ at scale $s$ is defined by $w_s\colon\Omega \to T^2(\R^2)$ using the expansion
\[
 w_s(x) \coloneqq  \sum_{i=0}^{ a / s - 1} \sum_{j=0}^{b / s - 1} w_{s,i,j} 1_{\Omega_{s,i,j}}(x), \qquad  1_{\Omega_{s,i,j}}(x)  \coloneqq 
 \begin{cases}
   1, & x \in \Omega_{s,i,j},\\
   0, &\text{else}.
 \end{cases}
\]
where $w_{s,i,j} \in T^2(\R^2)$, $i=0,\dots,a/s-1$, $j=0,\dots,b/s-1$, are the values on the subdomains
\[
  \Omega_{s,i,j} \coloneqq (s i, s(i+1)) \times (s j, s (j+1)), \qquad i=0,\dots, a/s - 1, \quad j=0,\dots, b/s  - 1.
\]
Note that the subdomains $\Omega_{s,i,j}$ have side length $s$, which can be made arbitrarily small. In particular, decreasing scales $s$ lead to finer resolutions of the approximations $u_s$ and $w_s$ of $u$ and $w$, respectively. Moreover, the coarsest resolution is given by the scale $s_{\max} := \mathrm{gcd}(a,b)$, the greatest common divisor of $a$ and $b$.

Finally, the energy \eqref{eq:var_problem_discrete} is approximated at scale $s$ by the sampled energy
\begin{equation}
  \label{eq:energyDiscretized}
  E_s(u_s, w_s) \coloneqq s^2 \sum_{i=0}^{a/s - 1} \sum_{j=0}^{b/s - 1} D_{s,i,j}(u_s) + \alpha_1  \TV^1_{s,i,j}(u_s,w_s) + \alpha_2 \TV^2_{s,i,j}(w_s) + \beta F_{s,i,j}(u_s),
\end{equation}
where $D_{s,i,j}$, $\TV^1_{s, i,j}$, $\TV^2_{s, i,j}$ and $F_{s,i,j}$ are the following discretizations of the corresponding integrals in \eqref{eq:var_problem_discrete} on the subdomains $\Omega_{s,i,j}$.
For a given sampling size $m \in \N$, we use the sampling points 
\[
  x_{s,i,j,k,l} \coloneqq \Big(s\big(i+\tfrac{2k+1}{2m}\big), s\big(j + \tfrac{2l+1}{2m}\big)\Big) \in \Omega_{s,i,j}
\]
and define
\[
  \begin{aligned}
  & D_{s,i,j} (u_s)  
	\coloneqq 
	\frac{1}{m^{2}} \sum_{k,l=0}^{m-1} 
  \dx_{\S^3 / \mathcal S}\Big(I_1(x_{s,i,j,k,l}),
	\overline{q(\mathrm{e}_3, \theta_u(x_{s,i,j,k,l}))} \odot I_2\bigl(x_{s,i,j,k,l} + u_s(x_{s,i,j,k,l})\bigr)\Big), \\
  & \TV^1_{s,i,j}(u_s,w_s) 
	\coloneqq 
	\frac{1}{m^{2}} \sum_{k,l=0}^{m-1}  \| \nabla u_s(x_{s,i,j,k,l}) - w_{s,i,j}\|_F, \\
  &\TV^2_{s,i,j} (w_s) 
	\coloneqq
  \frac{1}{s}
  \begin{cases}
  \big( \| w_{s,i+1,j} - w_{s,i,j} \|_F^2 + \| w_{s,i,j+1} - w_{s,i,j} \|_F^2 \big)^\frac12, & i < a/s - 1,\; j < b/s -1,\\
  \| w_{s,i+1,j} - w_{s,i,j} \|_F, & i < a/s-1,\; j = b/s -1,\\
  \| w_{s,i,j+1} - w_{s,i,j} \|_F, & i =  a/s - 1,\; j <   b/s - 1,\\
  \end{cases}\\
  & F_{s,i,j} (u_s)  \coloneqq 
	\frac{1}{m^{2}} \sum_{k,l=0}^{m-1} f\bigl(\det(\mathbf{I}_2 + \nabla u_s(x_{s,i,j,k,l})) \bigr).
  \end{aligned}
\] 
Note that increasing sampling sizes $m$ lead to higher accuracy of the approximated integrals, at the cost of higher computational demands. Hence, we like to set the sampling size $m$ at a given scale $s$ preferably small. In order to catch at least the features of $I_1$ in the data term $D_{s,i,j}(u_s)$, it is reasonable to choose $m \ge s$.

For minimizing the discretized energy $E_s$ in \eqref{eq:energyDiscretized}, 
we use an iterative optimization method. 
Since the energy $E_s$ depends not only on the displacement field $u_s$ and the tensor field $w_s$, 
but also on the derivatives of $u_s$ and the finite differences of $w_s$  in $\TV^2$,
a variable splitting approach with additional variables for these expressions is necessary.  
The derivatives of $u_s\colon\Omega \to \mathbb R^2$ are polynomials 
of degree at most one on the domain $\Omega_{s,i,j}$. More precisely, from
\[
\frac{\partial}{\partial x_1} B(x) =
\begin{cases}
  1 - |x_2|, & -1 < x_1 < 0,\\
  |x_2| - 1, & \;\;\; 0 < x_1 <1,\\
\end{cases}
\quad
\frac{\partial}{\partial x_2} B(x) =
\begin{cases}
  1 - |x_1|, & -1 < x_2 < 0,\\
  |x_1| - 1, & \;\;\; 0 < x_2 <1,
\end{cases}
\]
we infer for $x \in \Omega_{s,i,j}$ the relations
\begin{equation*}
  \begin{aligned}
  \frac{\partial}{\partial x_1} u_s(x)
   & = z_{s,i,j,1} (1 - t_2) + z_{s,i,j,2} t_2, \qquad t_2 \coloneqq x_2/s - j \in (0,1),\\
  \frac{\partial}{\partial x_2} u_s(x)
   & = z_{s,i,j,3} (1 - t_1) + z_{s,i,j,4} t_1, \qquad t_1 \coloneqq x_1/s - i \in (0,1), 
  \end{aligned}  
\end{equation*}
where the finite difference coefficients $z_{s,i,j,\kappa}$, $\kappa = 1,\ldots,4$, solve the system
of equations
\begin{equation} \label{eq:z}
  \begin{aligned}
  0= h_{s,i,j,1}(u_s,z_s) &\coloneqq z_{s,i,j,1} - (u_{s,i+1,j} - u_{s,i,j})/s , \\
  0= h_{s,i,j,2}(u_s,z_s) &\coloneqq z_{s,i,j,2} - (u_{s,i+1,j+1} - u_{s,i,j+1})/s, \\
  0= h_{s,i,j,3}(u_s,z_s) &\coloneqq z_{s,i,j,3} - (u_{s,i,j+1} - u_{s,i,j})/s , \\
  0= h_{s,i,j,4}(u_s,z_s) &\coloneqq z_{s,i,j,4} - (u_{s,i+1,j+1} - u_{s,i+1,j})/s.
  \end{aligned}
\end{equation}

Note that this artificial linear constraint is crucial for applying the ADMM algorithm.
Similarly, for the differences of the function $w_s\colon\Omega \to T^2(\R^2)$ appearing in $\TV^2$, 
we introduce the finite difference variables $\omega_{s,i,j,\kappa} \in T^2(\R^2)$, $\kappa = 1,2$, which solve the system
of equations
\begin{equation}  \label{eq:w}
  \begin{aligned}
  0 = g_{s,i,j,1}(w_s, \omega_s)
	& \coloneqq 
  \begin{cases}
    \omega_{s,i,j,1} - (w_{s,i+1,j} - w_{s,i,j}), & i < a/s - 1,\\
    \omega_{s,i,j,1}, & i = a/s- 1,
  \end{cases}
  \\
  0 = g_{s,i,j,2}(w_s, \omega_s) & \coloneqq 
  \begin{cases}
    \omega_{s,i,j,2} - (w_{s,i,j+1} - w_{s,i,j}), & j < b/s - 1,\\
    \omega_{s,i,j,2}, & j = b/s  - 1.
  \end{cases}
\end{aligned}
\end{equation}
Then, we replace any occurrence of the displacement gradient $\nabla u_s\colon\Omega \to T^2(\R^2)$  
in the discretized energy $E_s$ in \eqref{eq:energyDiscretized} 
by the piecewise continuous function
\[
  z_s(x) \coloneqq \sum_{i=0}^{a/s - 1}\sum_{j=0}^{ b/s - 1}
  \begin{pmatrix}
    z_{s,i,j,1,1}(1-t_2) + z_{s,i,j,2,1}t_2& z_{s,i,j,3,1}(1-t_1) + z_{s,i,j,4,1}t_1\\
    z_{s,i,j,1,2}(1-t_2) + z_{s,i,j,2,2}t_2& z_{s,i,j,3,2}(1-t_1) + z_{s,i,j,4,2}t_1
  \end{pmatrix},
\] 
where $t_1 = x_1/s - i$ and $t_2 = x_2/s-j$.
For the function $w_s$, we replace the differences in $\TV^2_{s,i,j}$
by the variable $\omega_{s} \coloneqq (\omega_{s,i,j,\kappa})_{i,j,\kappa}$. 
Consequently, if the constraints \eqref{eq:z} and \eqref{eq:w} are satisfied, we can rewrite the summands appearing in \eqref{eq:energyDiscretized} as
\[
  \begin{aligned}
  &D_{s,i,j}(u_s, z_s) 
	 \coloneqq 
	\frac{1}{m^{2}} \sum_{k,l=0}^{m-1} 
  \dx_{\mathbb S^3 / \mathcal S} \left( I_1(x_{s,i,j,k,l}), 
	\overline{q(\theta_{z_s}(x_{s,i,j,k,l}))} \odot I_2\bigl(x_{s,i,j,k,l} + u_s(x_{s,i,j,k,l})\bigr) \right), \\
  &\TV^1_{s,i,j}(w_s, z_s) 
	 \coloneqq 
	\frac{1}{m^{2}} \sum_{k,l=0}^{m-1} \| z_s(x_{s,i,j,k,l}) - w_{s,i,j}\|_F,\\
  &\TV^2_{s,i,j}(\omega_s) 
	 \coloneqq 
	\frac{1}{s} \big( \| \omega_{s,i,j,1} \|_F^2 + \| \omega_{s,i,j,2} \|_F^2 \big)^\frac12,\\
  &F_{s,i,j}(z_s) 
	 \coloneqq  
	\frac{1}{m^{2}} \sum_{k,l=0}^{m-1} f\bigl(\det(\mathbf{I}_2 + z_s(x_{s,i,j,k,l})) \bigr),
  \end{aligned}
\]
where $\theta_{z_s}(x_{s,i,j,k,l})$ denotes the angle defined in \eqref{eq:polarAngle2d} for the matrix $\mathbf{I}_2 + z_s (x_{s,i,j,k,l})$.
In summary, we get an extended form of \eqref{eq:energyDiscretized}, which we denote with $E_s(u_s, w_s, z_s, \omega_s)$.

\begin{remark}  \label{rem:discrete_modifications}
  The discrete version for the modification of the model \eqref{eq:var_problem_orig} introduced in Subsection~\ref{sec:well-posed_modifications_2} can be treated in a similar way. 
	For instance, the discrete version of the $\TV^2$-model is obtained by setting $\alpha_1 = 0$ in  \eqref{eq:energyDiscretized} and adding the constraints $z_s(i+\frac12,j+\frac12) = w_{s,i,j}$, $i = 0,\dots, a/s$, $j=0, \dots, b/s$, to the optimization problem \eqref{eq:energyDiscretized}.
	Then, similar algorithms as proposed in the next section can be derived.
\end{remark}

\section{Optimization Algorithm} \label{sec:alg}
%
In this section, we describe the optimization algorithm for the non-smooth, non-convex
and high dimensional problem \eqref{eq:energyDiscretized}.
A reasonable and efficient method for solving constrained optimization problems 
is the Augmented Lagrangian Method (ALM), also known as Method of Multipliers, see \cite{He69,Po72,Ro76a},
which enables the use of unconstrained optimization solvers. 
Global convergence results under relatively mild conditions, 
even for non-smooth and non-convex optimization problems, were proved, e.g., in \cite{AnBiMaSch08,BiFlMa10}. 
We use a particularly efficient variant, 
the Alternating Direction Method of Multipliers (ADMM), 
which in the context of convex optimization provides global convergence.
It goes back to \cite{GM76,GM75} and for an overview we refer to \cite{BPCPE11,glowinski14}.
In general, the ADMM cannot be applied reliably 
to non-convex and non-smooth problems. 
Recently, some promising results for particular problems were given in \cite{WaYiZe19}.
In the following, we briefly show how the ADMM can be applied to our problem.
Indeed all ADMM steps can be incorporated within a multilevel approach and can be computed in an efficient way, where we observe numerical convergence. We are not aware of any other approach for the registration of EBSD data in the literature.

\subsection{Algorithm}
We start by noting that the augmented Lagrangian function for the minimization of $E_s(u_s, w_s, z_s, \omega_s)$ under the constraints \eqref{eq:z} and \eqref{eq:w} is given by
\begin{equation}
  \label{eq:energyAugmentedLagrangian}
    L_\mu (u_s, w_s, z_s, \omega_s, \lambda_h, \lambda_g) 
		\coloneqq 
		E_s(u_s, w_s, z_s, \omega_s) + \tfrac{\mu}{2} H_{s}(u_s, w_s, z_s, \omega_s,\lambda_h,\lambda_g), \qquad \mu >0,
\end{equation}  
with
  \begin{align}
   H_{s}(u_s, w_s, z_s, \omega_s, \lambda_h, \lambda_g)   
  & \coloneqq 
	\sum_{i=0}^{a/s -1}\sum_{j=0}^{ b/s -1} \sum_{\kappa =1}^4  
	\| h_{s,i,j,\kappa}(u_s,z_{s})  	+ \tfrac{1}{\mu} \lambda_{h,i,j,\kappa} \|_2^2   \\
  & \; + \sum_{i=0}^{ a/s -1}\sum_{j=0}^{ b/s -1} \sum_{\kappa=1}^2  
	\| g_{s,i,j,\kappa}(w_s,\omega_{s}) +  \tfrac{1}{\mu} \lambda_{g,i,j,\kappa}\|_F^2, 
  \end{align}
primal variables 
\[
  \begin{aligned}
  u_{s,i,j} &\in \mathbb R^2 , & i&=0,\dots, a/s, & j&=0,\dots, b/s, \\
   w_{s,i,j} &\in \mathbb R^{2, 2} , & i&=0,\dots, a/s - 1, & j&=0,\dots,  b/s - 1,\\
   z_{s,i,j} & \coloneqq (z_{s,i,j,\kappa})_{\kappa=1}^4 \in \mathbb R^{2,4}, & i&=0,\dots, a/s - 1, &  j&=0,\dots,b/s  - 1, \\
   \omega_{s,i,j} & \coloneqq (\omega_{s,i,j,\kappa})_{\kappa=1}^2 \in \mathbb R^{2,2,2}, & i&=0,\dots, a/s - 1, & j&=0,\dots,  b/s - 1,\\
  \end{aligned}
\]
and dual variables 
\[
  \lambda_{h} \coloneqq (\lambda_{h,i,j,\kappa})_{\kappa=1}^4 \in \mathbb R^{2,4}, \;
	\lambda_{g} \coloneqq (\lambda_{g,i,j,\kappa})_{\kappa=1}^2 \in \mathbb R^{2,2,2},
	\quad i=0,\dots, a/s-1, \,\, j=0,\dots, b/s-1.
\]
Then, the ADMM aims to solve the constrained problem starting with an initial guess
$
(u_s^0, w_s^0, z_s^0, \omega_s^0, \lambda_h^0,\lambda_g^0)
$
iteratively based on the alternating primal-dual procedure
\begin{align}
  (u_s^{r+1}, w_s^{r+1})      
	& \coloneqq \argmin_{(u_s, w_s)} L_\mu (u_s, w_s, z_s^{r}, \omega_s^{k}, \lambda_h^{r}, \lambda_g^{r}), 
	\label{eq:UVmin}\\
  (z_s^{r+1}, \omega_s^{r+1}) 
	& \coloneqq \argmin_{(z_s, \omega_s)} L_\mu(u_s^{r+1}, w_s^{r+1}, z_s, \omega_s, \lambda_h^{r}, \lambda_g^{r}), 
	\label{eq:ZWmin}\\
    (\lambda_h^{r+1}, \lambda_g^{r+1})
    & \coloneqq 
	(\lambda_h^{r}, \lambda_g^{r}) + \mu \,( h_s^{r+1}, g_s^{r+1}),
	\label{eq:lambdaAscent} 
\end{align}
see \cite{BSS2016}.
Unfortunately, we cannot give an explicit solution for the primal problems \eqref{eq:UVmin} and \eqref{eq:ZWmin}. 
Instead, we minimize the augmented Lagrangian $L_\mu$ 
for the primal variables iteratively using the algorithms in Appendix \ref{app:B}.
More precisely, the optimal $z_s$, $u_s$ and $w_s$ are computed by steepest descent methods 
with inexact line search, c.f.\ Algorithm~\ref{alg:LineSearch}, 
whereas the optimal $\omega_s$ is computed analytically. 
Unfortunately, descent methods converge in general only towards local minimizers. 
Furthermore, we like to emphasize that the function $L_\mu$ 
might not be differentiable at particular points.
At such points we use a subgradient instead of the gradient. 
Since these points of non-differentiability occur only where the minimum of the distance or norms is achieved, the proposed algorithms might have problems only close to local minimizers, where non-differentiability is present. 
In such cases, it is  difficult to determine the correct step size by the line search.
However, our numerical experiments indicate that the proposed algorithms do perform well even in such corner cases. 
\\[2ex]
\textbf{$(u_s, w_s)$ - Minimization.}
The minimization in \eqref{eq:UVmin} w.r.t.\ the variables $u_s$, $w_s$ can be done separately.
For $u_{s}$ we aim to minimize the sums
\[
  S^1_{s,i,j}(u_s) \coloneqq D_{s,i,j}(u_s, z_s) + 
	\tfrac{\mu}{2} \sum_{\kappa=1}^{4} \bigl\|h_{s,i,j,\kappa}(u_s, z_{s}) +  \tfrac{1}{\mu} \lambda_{h,i,j,\kappa}\bigr\|_2^2 ,
\]
independently and parallel for $i=0, \dots, a / s - 1$, $j=0,\dots, b/s  - 1$. 
However, since $S^{1}_{s,i,j}$ depends on the variables $u_{s,i+k,j+l}$, $k,l \in \{0,1\}$, 
we propose to decouple the optimization as described in Algorithm~\ref{alg:Umin}. 
The advantage of the decoupling is that different step lengths 
can be taken for different regions of the displacement field $u_s$.

Similarly, for $w_s$ we aim to minimize the sums
\[
  S^2_{s,i,j}(w_s) 
	\coloneqq 
	\alpha_1 \TV^1_{s,i,j}(z_s, w_s) + \tfrac{\mu}{2}  \sum_{\kappa=1}^2 
	\bigl\| g_{s,i,j,k}(w_s,\omega_{s}) + \tfrac{1}{\mu} \lambda_{g,i,j,\kappa} \bigr\|_F^2 
\]
independently and parallel for $i=0, \dots, a / s - 1$, $j=0,\dots, b/s - 1$ 
as described in Algorithm~\ref{alg:Vmin}. 
\\[2ex]
\textbf{$(z_s, \omega_s)$ - Minimization.}
The minimization in \eqref{eq:ZWmin} of the variables $z_s$ and $\omega_s$ can be done separately.
For $z_s$ the sums 
\[
  \begin{aligned}
  S^3_{s,i,j}(z_s) & \coloneqq D_{s,i,j}(u_s, z_s) + \alpha_1  \TV^1_{s,i,j}(z_s, w_s) + \beta F_{s,i,j}(z_s) \\ 
  & + \tfrac{\mu}{2} \sum_{\kappa=1}^{4}  \bigl\|h_{s,i,j,\kappa}(u_s, z_{s}) +  \tfrac{1}{\mu}
	\lambda_{h,i,j,\kappa}\bigr\|_2^2 
  \end{aligned}
\]
can be minimized independently and parallel for $i=0, \dots, a / s - 1$, $j=0,\dots,  b/s - 1$. 
The main difficulty in the optimization of $S^3_{s,i,,j}$ arises from the non-differentiability of $D_{s, i, j}$ along higher dimensional varieties.
This is caused by the level sets of the function in \eqref{eq:polarAngle2d}, which reads for our setting as
\begin{equation}
  \label{eq:angle_z}
   \theta(z) \coloneqq \arctan \Bigl( \frac{z_{2,1} - z_{1,2}}{ z_{1,1} + z_{2,2} + 2} \Bigr), \qquad z \in T^2(\R^2),
\end{equation}
and the non-differentiability of the distance $\dx_{\S^3 /\mathcal S}(\cdot , q)$, $q \in \S^3/\mathcal S$ at $q$. 

In order to obtain more suitable descent directions, we decompose the subgradient 
\[
  \nabla_{z_{s,i,j}} L_\mu =  \nabla_{z_{s,i,j}} S^3_{s,i,j}
\]
into a gradient ``parallel'' and a gradient ``orthogonal'' to the non-differentiable variety. 
The ``orthogonal'' gradient is defined by
\begin{equation}   \label{eq:zsGradOrtho}
  \nabla_{z_{s,i,j}}^{\perp} L_\mu \coloneqq \nabla_{z_{s,i,j}} D_{s,i,j},
\end{equation}
since a change in the data term $D_{s, i,j}$ drives directly 
a change in the level sets of the rotation angles, see \eqref{eq:angle_z}. 
Then, the ``parallel'' gradient is given by the orthogonal projection 
\begin{equation}  \label{eq:zsGradPar}
  \nabla_{z_{s,i,j}}^{\parallel} L_\mu 
	\coloneqq 
	\nabla_{z_{s,i,j}} L_\mu - 
  \frac{\bigl\langle \nabla_{z_{s,i,j}}^{\perp} L_\mu, \nabla_{z_{s,i,j}} L_\mu \bigr\rangle}{\bigl\langle 
	\nabla_{z_{s,i,j}}^{\perp} L_\mu, \nabla_{z_{s,i,j}}^{\perp} L_\mu \bigr\rangle}   \nabla_{z_{s,i,j}}^{\perp} L_\mu,
\end{equation}
which is likely to point in directions parallel to the level sets of the rotation angles.
Using alternately the descent directions $-\nabla_{z_{s,i,j}}^{\perp} L_\mu$ and $-\nabla_{z_{s,i,j}}^{\parallel}$,
we arrive at Algorithm~\ref{alg:Zmin}.

For the variable $\omega_s$, we minimize the sums
\[
  S^4_{s,i,j}(\omega_s) 
	\coloneqq 
	\alpha_2 \TV^2_{s,i,j}(\omega_s) + 
	\tfrac{\mu}{2} \sum_{\kappa=1}^2 
	\bigl\| g_{s,i,j,k}(w_s,\omega_{s}) + \tfrac{1}{\mu} \lambda_{s,i,j,\kappa}\bigr\|_F^2 , 
	\]
analytically and parallel for $i=0, \dots,  a / s - 1$, $j=0,\dots,  b/s - 1$. 
Here, we utilize that the solution of
\[
  \argmin_{x \in \mathbb R^d} \tfrac{\mu}{2} \|x - y\|_2^2 
	+ \beta \| x \|_2, \qquad y \in \mathbb R^d, \qquad \mu, \beta > 0
\] 
is given by the grouped soft shrinkage 
\[ 
  x^{*} =  
  \begin{cases}
      y  \bigl(1 - \tfrac{\beta}{\|\mu y \|_2}\bigr)   , & \|\mu y  \|_2 > \beta,\\
    0, & \text{else}.
  \end{cases}
\]

\subsection{Multilevel Approach and Implementation}
Minimizing $E_s(u_s, w_s, z_s, \omega_s)$ under the constraints \eqref{eq:z} and \eqref{eq:w} is a highly non-convex task.
This results in many local minima, in particular for high image resolutions with many degrees of freedom. 
In order to find good solutions, we  apply a multilevel approach, 
where we successively increase the approximation resolution. More precisely, 
for a given scaling factor $s_l > 0$ on level $l \in \N_0$, 
we apply the ADMM \eqref{eq:UVmin}-\eqref{eq:lambdaAscent} and use the computed displacement $u_{s_l}$ as initial guess for the next level by setting $s_{l+1} \coloneqq s_l / 2$. 
On the coarsest level $l=0$, we take the largest scale $s_0 := \gcd(a,b)$. The finest level $l_{\max}$ is achieved if $s_{l_{\max}} < 1$. Hence, domains $\Omega = (0,a) \times (0, b)$ with sides $a$ and $b$ having large common divisors, are preferred for our multilevel approach. Furthermore, we choose a relatively high sampling density by setting the sampling factor $m_0 := 2 s_0$, which is decreased at every subsequent level by setting $m_{l+1} \coloneqq \max\{m_l / 2, 2\}$. The use of high sampling factors $m_l$ allows us to avoid additional filtering steps, such as smoothing of the images $I_1$, $I_2$, as it is usually done for other registration or optical flow approaches, see, e.g., \cite{SRB13}. 
For the $(u_s, w_s)$- and $(z_s, \omega_s)$-minimization steps \eqref{eq:UVmin} and \eqref{eq:ZWmin},
it is sufficient to fix the maximal number of iterations $k_{\max} \coloneqq 5$ in Algorithms~\ref{alg:Umin}-\ref{alg:Zmin}. 
At each level $l$, we use $l \cdot 1000$ ADMM-iterations.
In cases where large deformations are expected, it might by advantageous to start with a small penalty parameter $\mu_0$ at level $l=0$ and increase it at each level. 
This adds more flexibility and efficiency for the optimization on coarser levels.

Finally, we like to remark that our discretization and optimization strategy is well suited for the use of parallel computing devices. 
Hence, we implemented the proposed algorithms for GPU devices using Python together with the CUDA toolkit.
Here, we use the Python interface provided by PyCUDA \cite{kloeckner_pycuda_2012} to implement the parallel Algorithms~\ref{alg:Umin}-\ref{alg:Zmin} with the CUDA programming language \cite{CUDA08}.
The software will be publicly available.

\section{Numerical Experiments} \label{sec:numerics}
In this section, we demonstrate the performance of our registration models 
and optimization algorithms 
on synthetic data as well as real world data. 
In Example~\ref{sec:tearing_square}, we show that the TGV-model \eqref{eq:var_problem1} 
and the ``higher regularity'' $\TV^2$-model \eqref{eq:var_problem2} 
are able to reconstruct a displacement field with a jump discontinuity. 
In Example~\ref{sec:ebsd_measurements}, we recover the rigid rotations 
between two rotated real-world EBSD datas sets.
Here, our model is in perfect accordance 
with the physical conditions.
Further, we discuss the influence of the regularization parameters in our model. 
Finally, in Example~\ref{sec:simulated_deformation}, we take data from a simulation where ice crystals are deformed under shear stress. Using the $\TV^2$-model, we are able to reconstruct the deformation.

The regularization parameters in our models are chosen to provide visual appealing reconstruction results for a variety of scenarios. More precisely, if not stated otherwise we use $\alpha_1 = 0.1$, $\alpha_2 = 0.5$ (TGV), $\alpha = 0.5$ ($\TV^2$), $\beta = 0.1$ (determinant). For ADMM we usually take $\mu = 1$, where smaller parameters appear leading to larger violations of the equality constraints and larger parameters $\mu$ slow down the overall convergence. The variables  $u_s, w_s, z_s, \omega_s, \lambda_h, \lambda_g$ of the multilevel method are initialized by zero. 

The orientations $q=(s,v) \in \S^3$ of the EBSD data in Figure~\ref{fig:tearing_square} -- \ref{fig:iceImages} are colorized by taking the absolute values of the vectorial part $v$ as color coordinates in RGB space. For more sophisticated color codings we refer to \cite{NoHi17}.
The symmetry group $\mathcal S$ is given via the phase of the EBSD measurements and specified below for each real-world example.

\subsection{Tearing Square} \label{sec:tearing_square}
In this example, we  demonstrate that both the TGV-model and the ``higher regularity'' $\TV^2$-model 
are able to recover the displacement field with a jump discontinuity. Recall, that our discrete model 
is based on continuous basis functions, so that we can recover jump discontinuities only in the 
limit of the refinement process. Nevertheless, we shall see that the TGV-model \eqref{eq:var_problem1} is able to resolve the jump discontinuities almost perfectly.

The images $I_1$ and $I_2$ of size $256 \times 256$, for which we reconstruct the displacement field, are given in the first row of Figure~\ref{fig:tearing_square}.
The black background in both images is associated to the orientation $(1, 0, 0, 0) \in \S^3$.
Image $I_1$ contains a square region of size 128x128 of constant orientation  $(0, 1 / \sqrt{2}, 1/ \sqrt{2}, 0) \in \S^3$.
In image $I_2$, the left half of the square is rotated to the left and the right half is rotated to the right, each by an angle of 30 degrees. 
Hence, the orientations are  $(0, 1/2, \sqrt{3}/2, 0) \in \S^3$ and $(0, \sqrt{3}/2, 1/2, 0) \in \S^3$, respectively.
For this example, we assume no further group symmetry, i.e., $\mathcal S = \{ \pm 1 \}$.

In Figure~\ref{fig:tearing_square}, we show the reconstructed displacement field of both methods. 
We observe that the jump part is better resolved by the TGV-model, 
which is easily explained by the higher order terms in the $\TV^2$-model. 
However, it is interesting that also the $\TV^2$-model is able to handle such large jump discontinuities. 

\begin{figure}
    \begin{center}
    \includegraphics[width=0.49\textwidth]{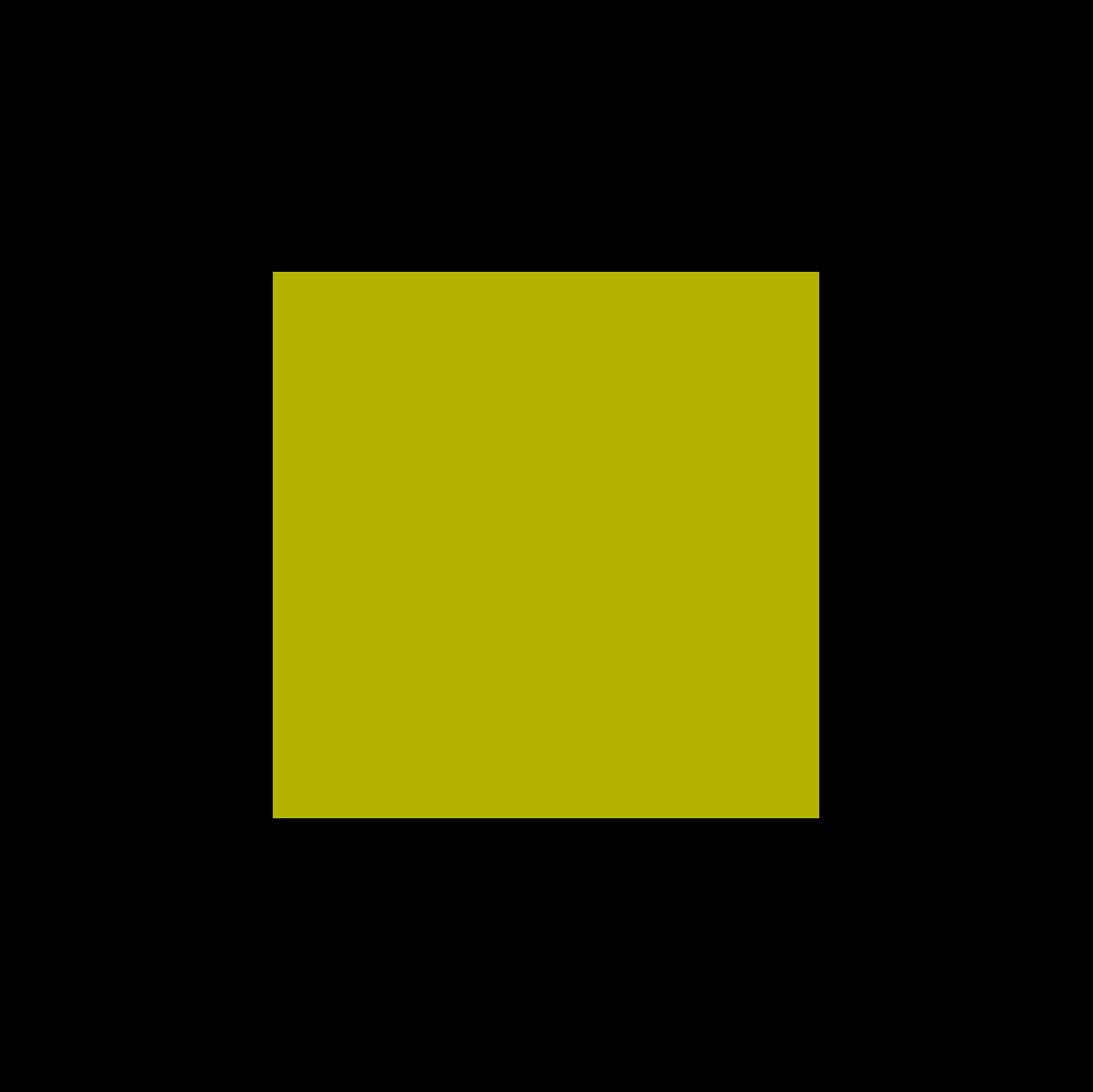}
    \includegraphics[width=0.49\textwidth]{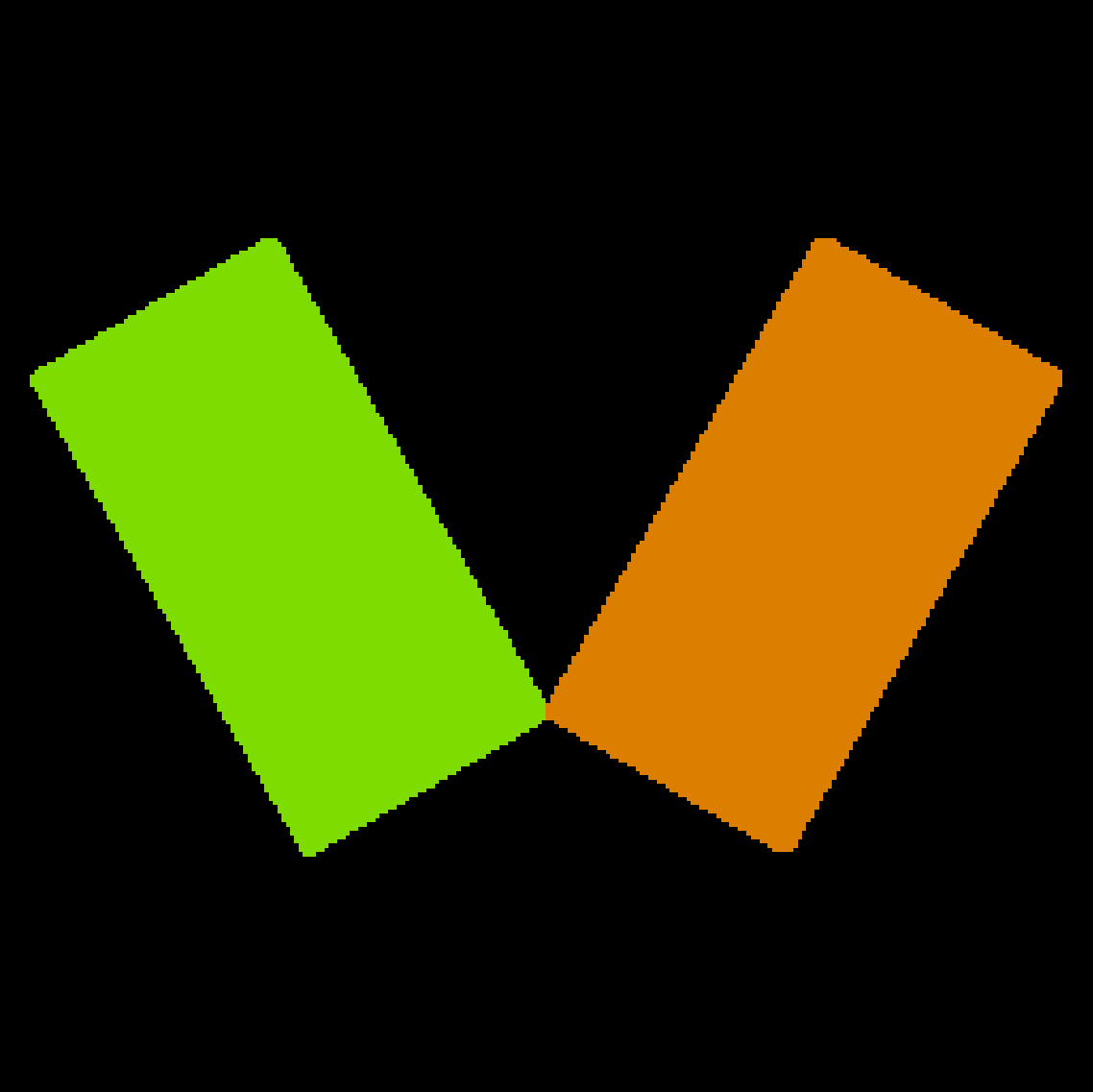}
    \includegraphics[width=0.30\textwidth]{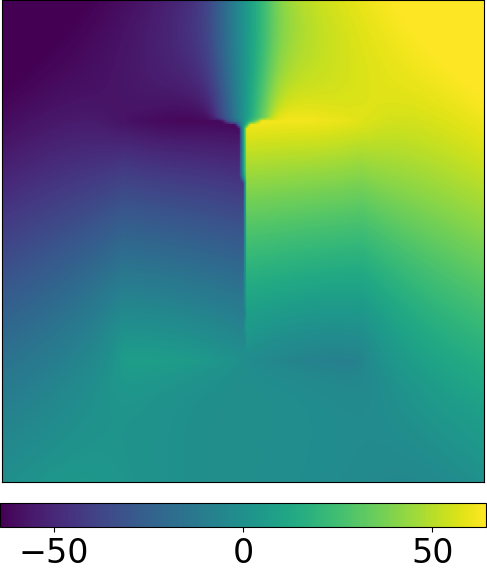}
    \includegraphics[width=0.314\textwidth]{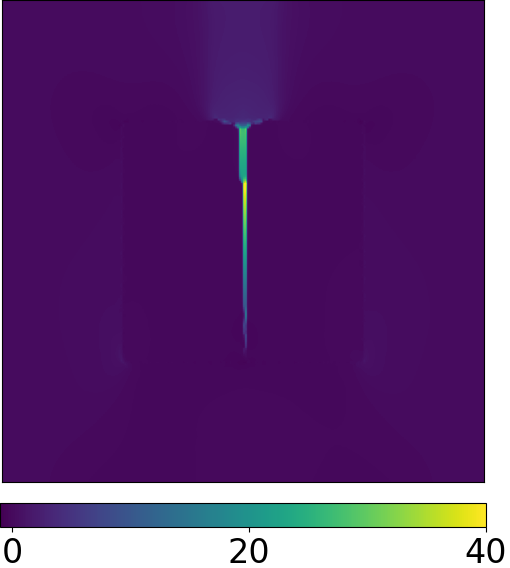}
    \includegraphics[width=0.36\textwidth]{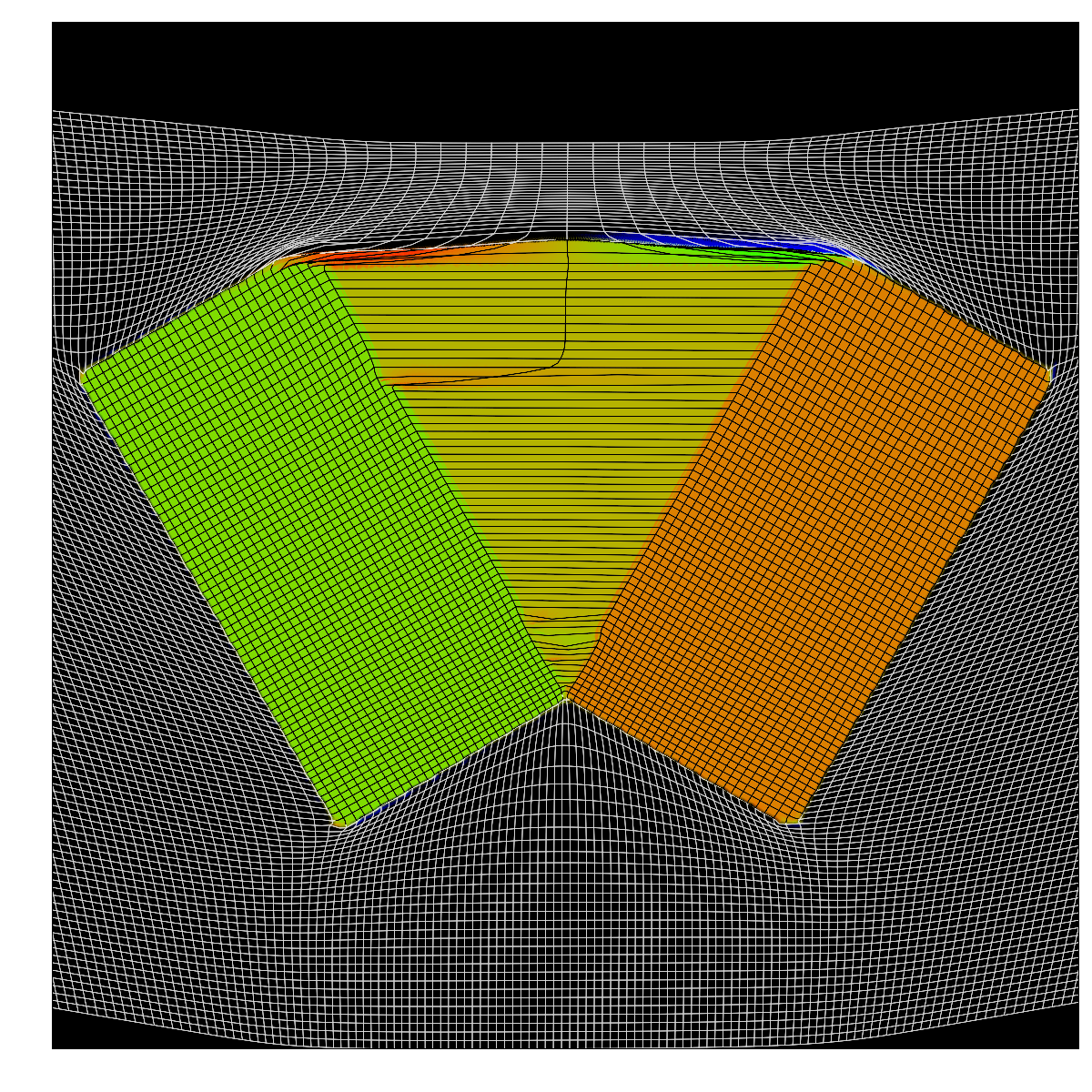}
    \includegraphics[width=0.30\textwidth]{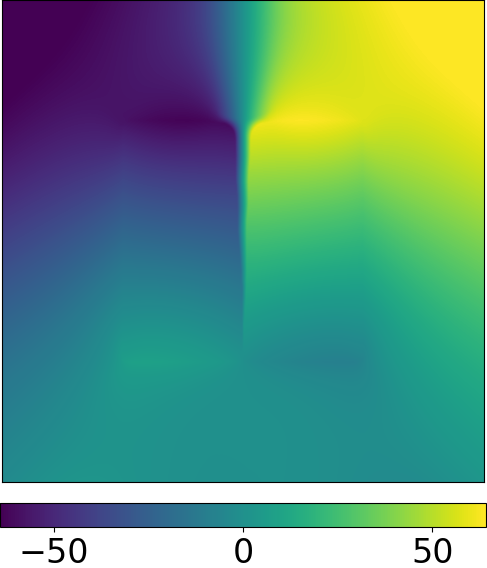}
    \includegraphics[width=0.314\textwidth]{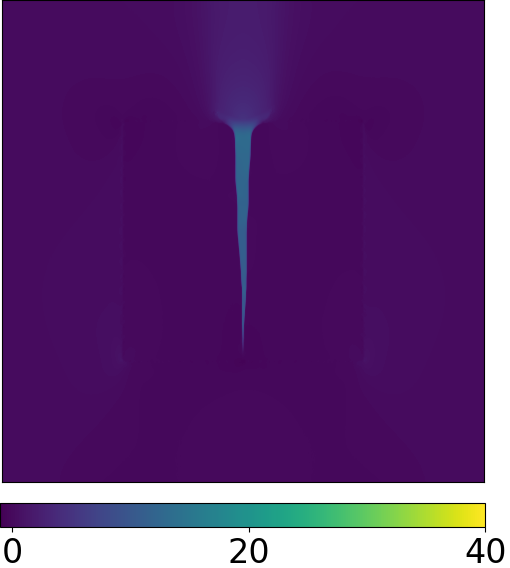}
    \includegraphics[width=0.36\textwidth]{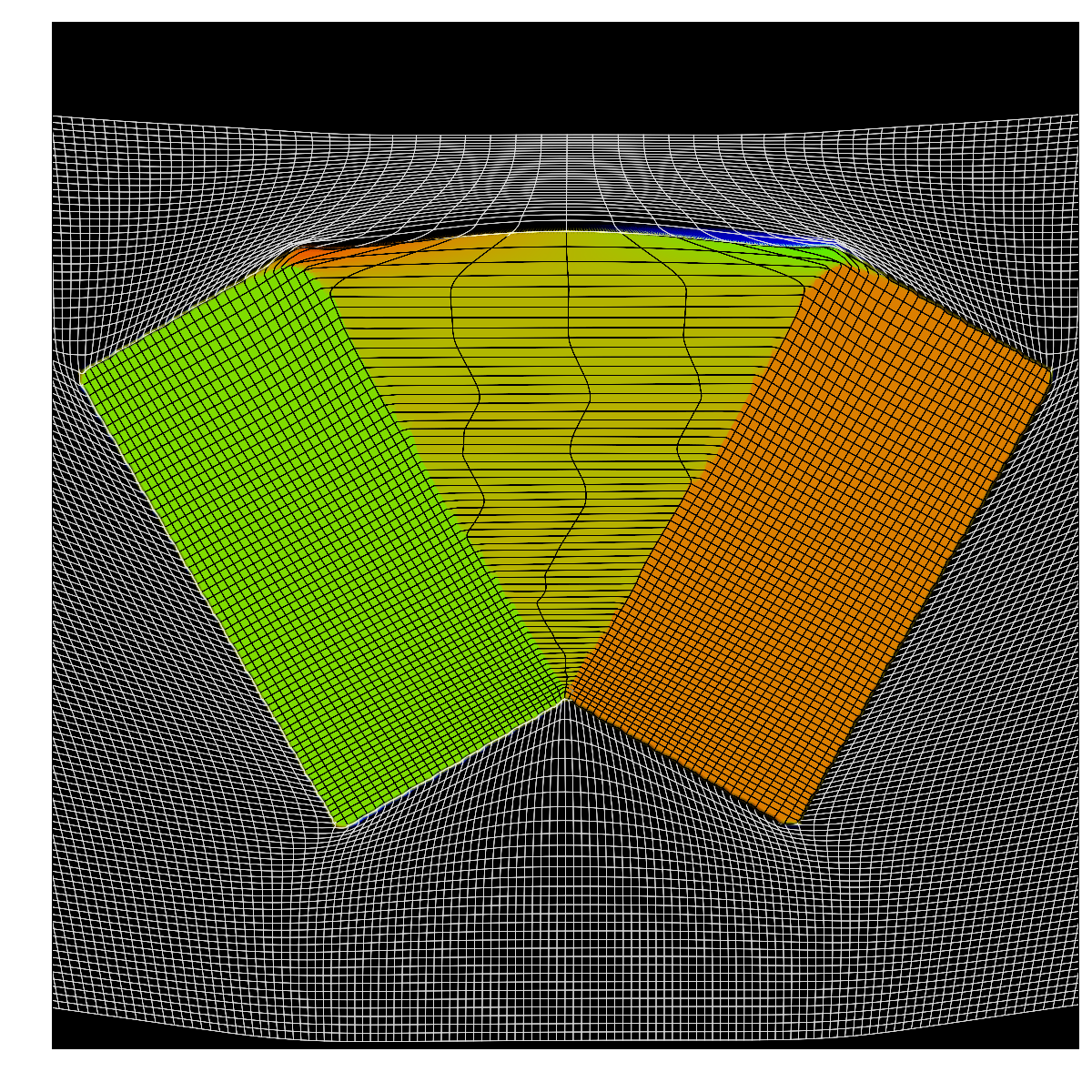}
    \end{center}
    \caption{ 
    Artificial EBSD data of a tearing square. Top row: Image $I_1$ (left) representing the original square and Image $I_2$ (right) representing the teared square. Both images are of size $256\times 256$ and visualized in RGB space. Middle row: Reconstruction results of the TGV-model. Bottom row: Reconstruction results of the $\TV^2$-model. From left to right: a) The first component of the displacement field $u_1$ showing the vertical jump discontinuity. b) Image of the derivative $\frac{\partial}{\partial x_1} u_1$. c) Visualization of $\mathrm R(\nabla \varphi) I_1 \circ \varphi^{-1}$ overlaid with a grid visualizing the reconstruction of $\varphi$. }
  \label{fig:tearing_square}
\end{figure}

\subsection{Reconstruction of Rotated EBSD Measurements}\label{sec:ebsd_measurements}

Next, we deal with data from real EBSD measurements of two different samples.
The first sample is fully measured, whereas the second one has corrupted data.
Both samples are measured in two positions, which differ by a rotation with axis almost perpendicular to the surface plane.
We use the first sample to demonstrate the differences 
between our model, which incorporates $R(\nabla \varphi)$, 
and the naive approach, where the orientation of the $\SO(3)/\mathcal S$-data is not changed by the transformation.
The second example illustrates the influence of the regularization parameters on the reconstruction. 
For both examples we use the $\TV^2$-model.

The EBSD data of the first sample are measured on a zirconium-hydrogen alloy zircaloy-4, which is typically used for constructive components in the nuclear power industry due to combination of excellent corrosion resistance, good neutron penetration and suitable mechanical properties.
This sample only consists of the hexagonal phase, which has the symmetry group
'6/mmm' \cite{Fuloria.2016,Pshenichnikov.2015}.
The position of image $I_1$ and image $I_2$ differs by a rotation of 50 degrees, as can be seen in the top row of Figure~\ref{fig:rotatedSample_ralf_50_degree}. 
The bottom row of Figure~\ref{fig:rotatedSample_ralf_50_degree} depicts the error of the 
reconstructed transformation $\varphi$, where we measure the distance of image $I_1$ to the rotation aware transformed image $R(\nabla \varphi) ^\tT I_2 \circ \varphi$ (left) and to the naively transformed image $I_2 \circ \varphi$ (right), respectively. We clearly observe that the rotation influences the orientations in the EBSD measurements. 
In the rotation aware model, we observe that the orientations of the matched grains are almost perfectly aligned (bottom, left). If we do not take the rotational change into account, there appear large differences in the corresponding orientations. 
\begin{figure}
  \begin{center}
  \includegraphics[width=0.442\textwidth]{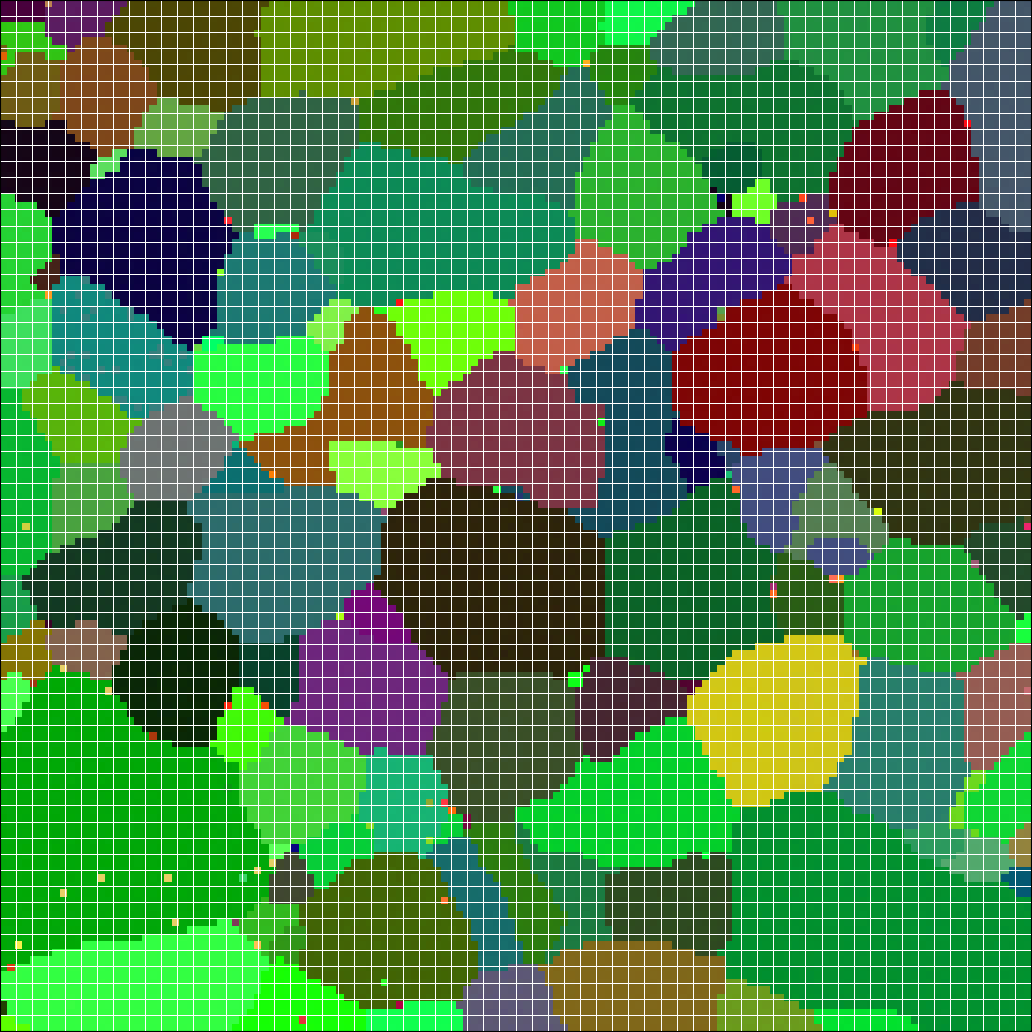}
  \includegraphics[width=0.535\textwidth]{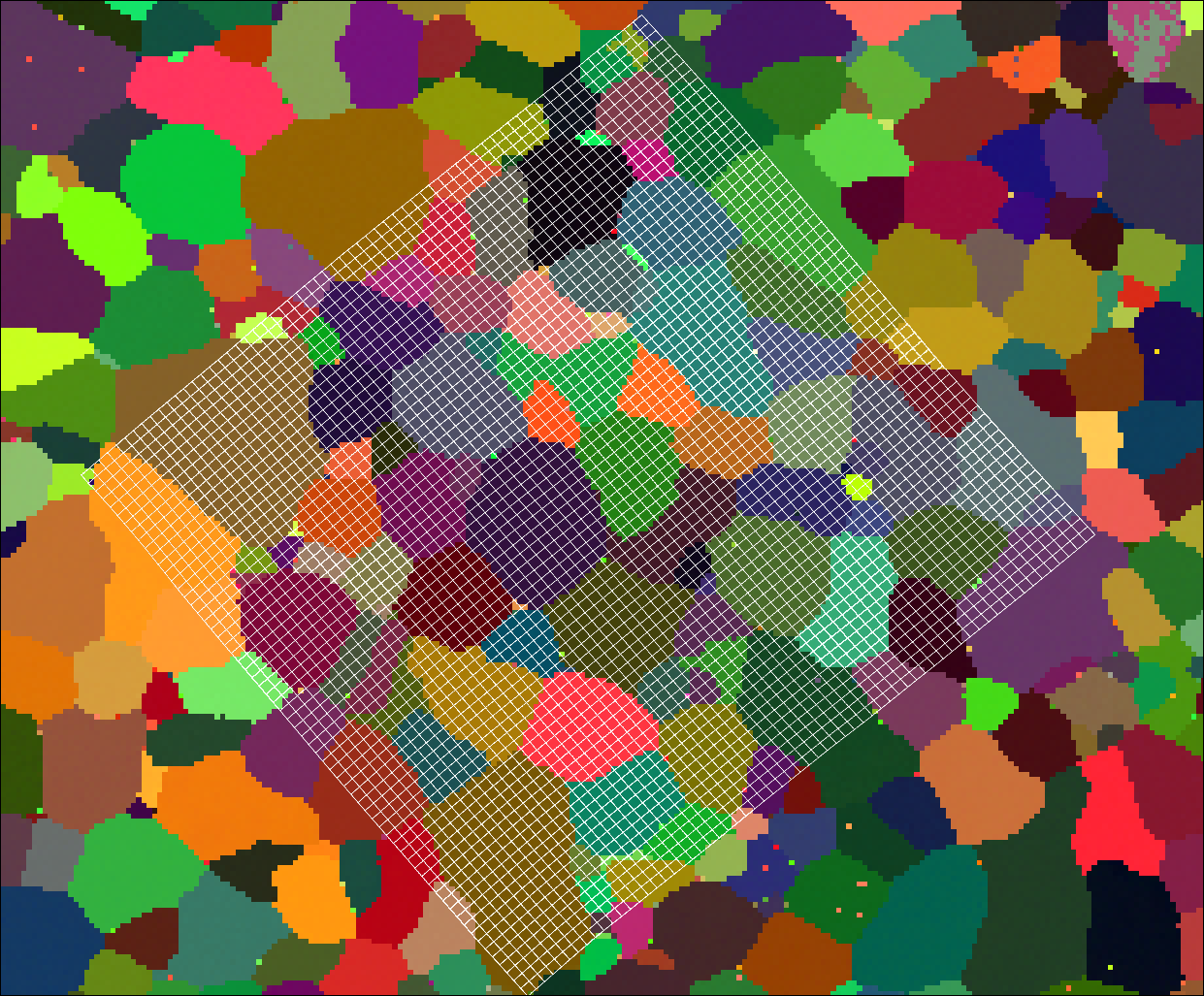}
  \includegraphics[width=0.485\textwidth]{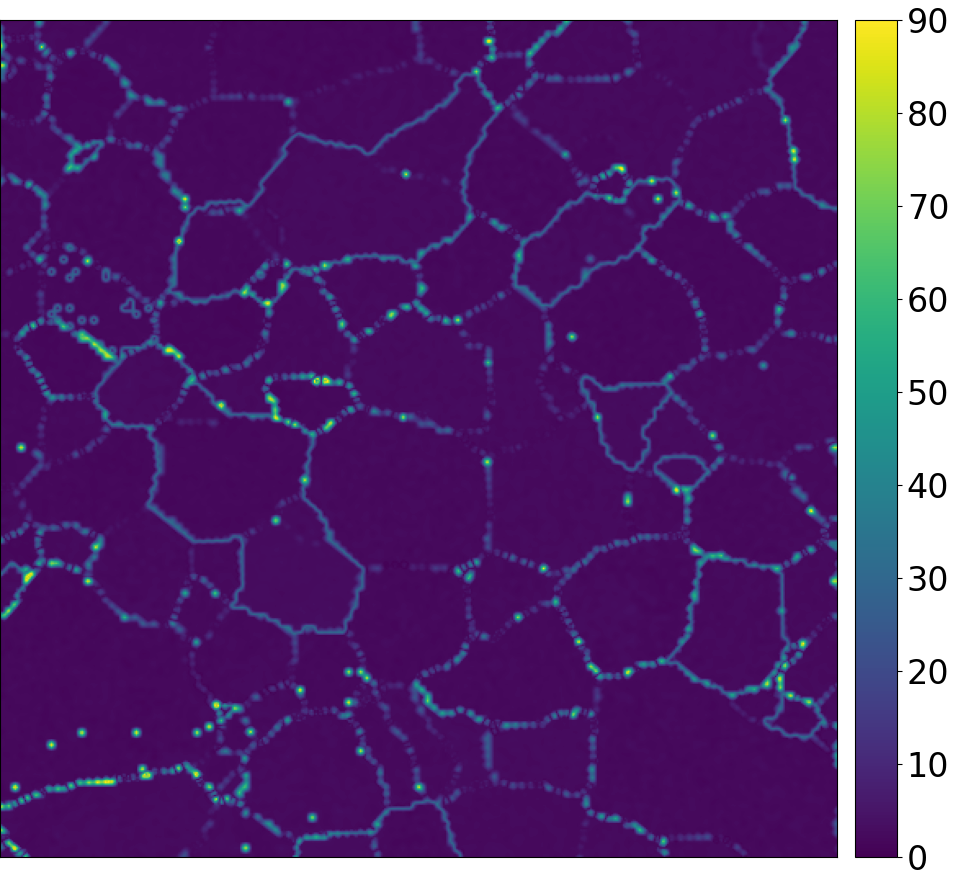}
  \includegraphics[width=0.485\textwidth]{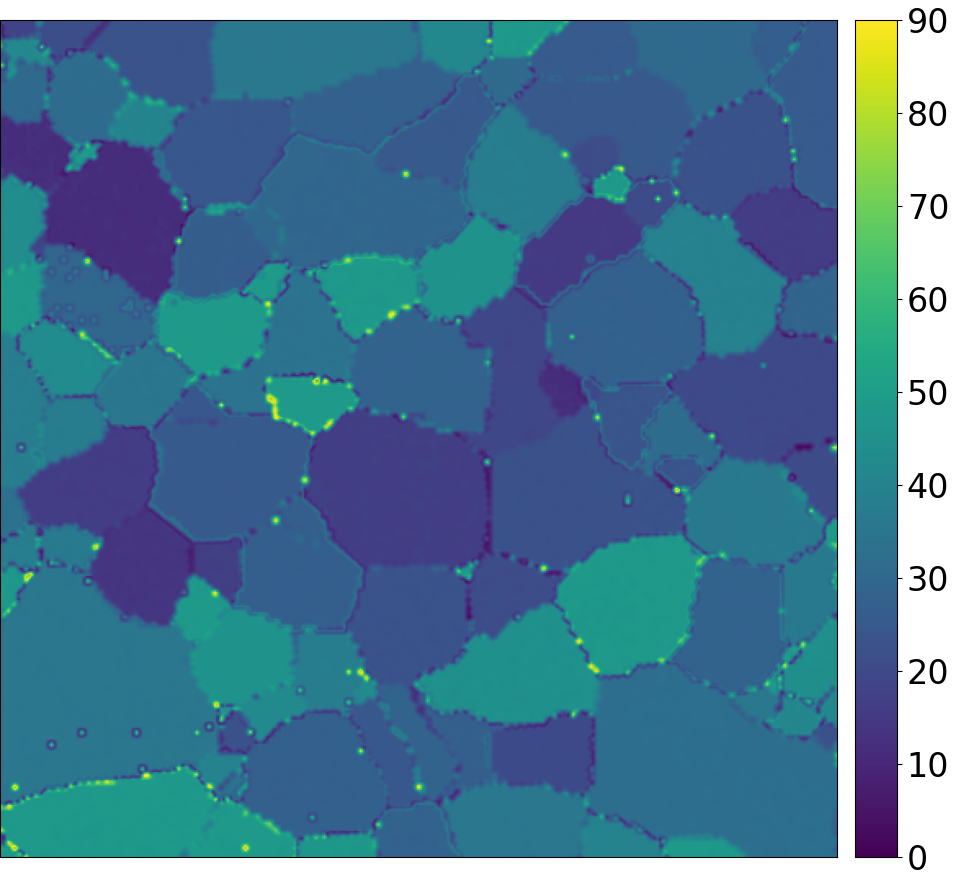}
  \end{center}
  \caption{ 
  Real EBSD data of a zirconium-hydrogen alloy zircaloy-4 measured in two different positions. Top row: Image $I_1$ (left) and image $I_2$ (right), both overlaid with a grid visualizing the reconstructed transformation (white grid). 
	Bottom row: Reconstruction error in degrees for our model (left) 
	and the model without orientation incorporation (right).
	The error for the naive model is much higher.}
  \label{fig:rotatedSample_ralf_50_degree}
\end{figure}

The second sample is a metastable austenitic so called TRIP (transformation induced plasticity) steel.
Deformation may induce both a transformation of the metastable austenitic phase (symmetry group '432') to martensitic phase (symmetry group '4/mmm') or a formation of crystallographic twins that lead to a change of macroscopic properties \cite{Schmidt.2018}.
In this case, a deformation was induced by an indentation in scope of a hardness measurement.
As EBSD is very surface sensitive, the indenter imprints can not be measured and appear as quasi diamond-shaped areas \cite{Brodusch.2018}.

For our experiment, we choose the phase to which the  majority of the grains corresponds to, namely '432'. The remaining phases and the three indenter imprints are filled 
by the constant orientation $(1,0, 0, 0) \in \S^3$ (black color) \emph{in both images} $I_1$ and $I_2$.
In the left column of Figure~\ref{fig:rotatedSample}, we can see that the position between the images $I_1$ and $I_2$ differs by a rotation of about 8 degrees. 
Moreover, we observe for decreasing regularization parameter $\alpha$ stronger deformations in areas of filled data. This behavior is expected for our model by to the influence of the rotation field $R(\nabla \varphi)$ in the data term, see right column in Figure~\ref{fig:rotatedSample}. 

\begin{figure}
    \begin{center}
    \includegraphics[width=0.45\textwidth]{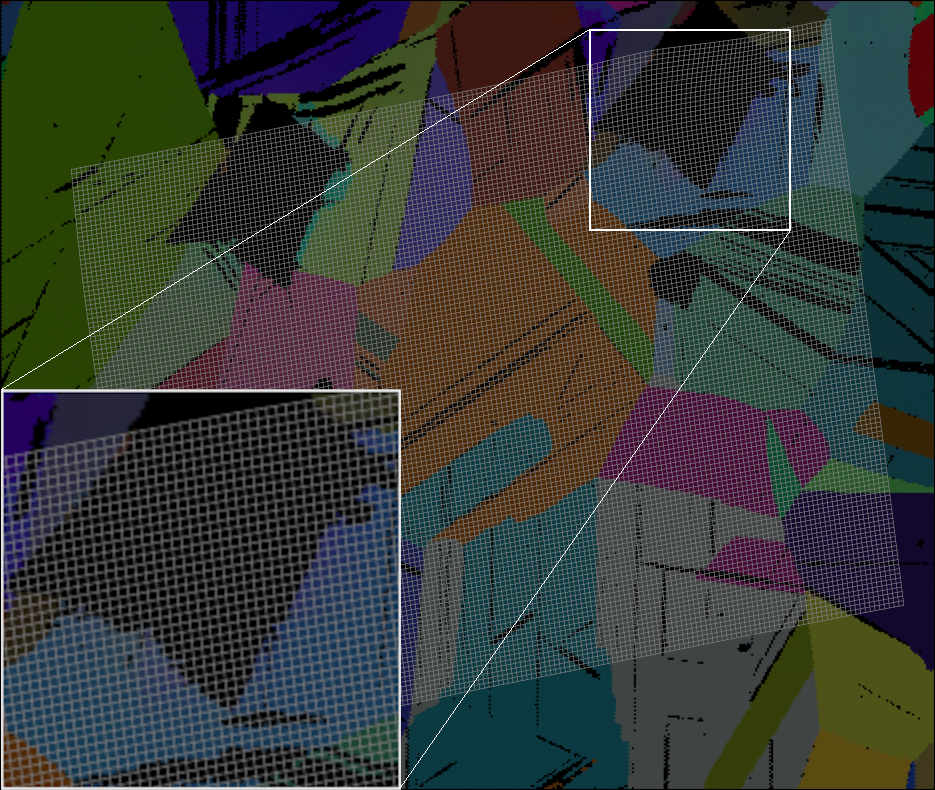}
    \includegraphics[width=0.475\textwidth]{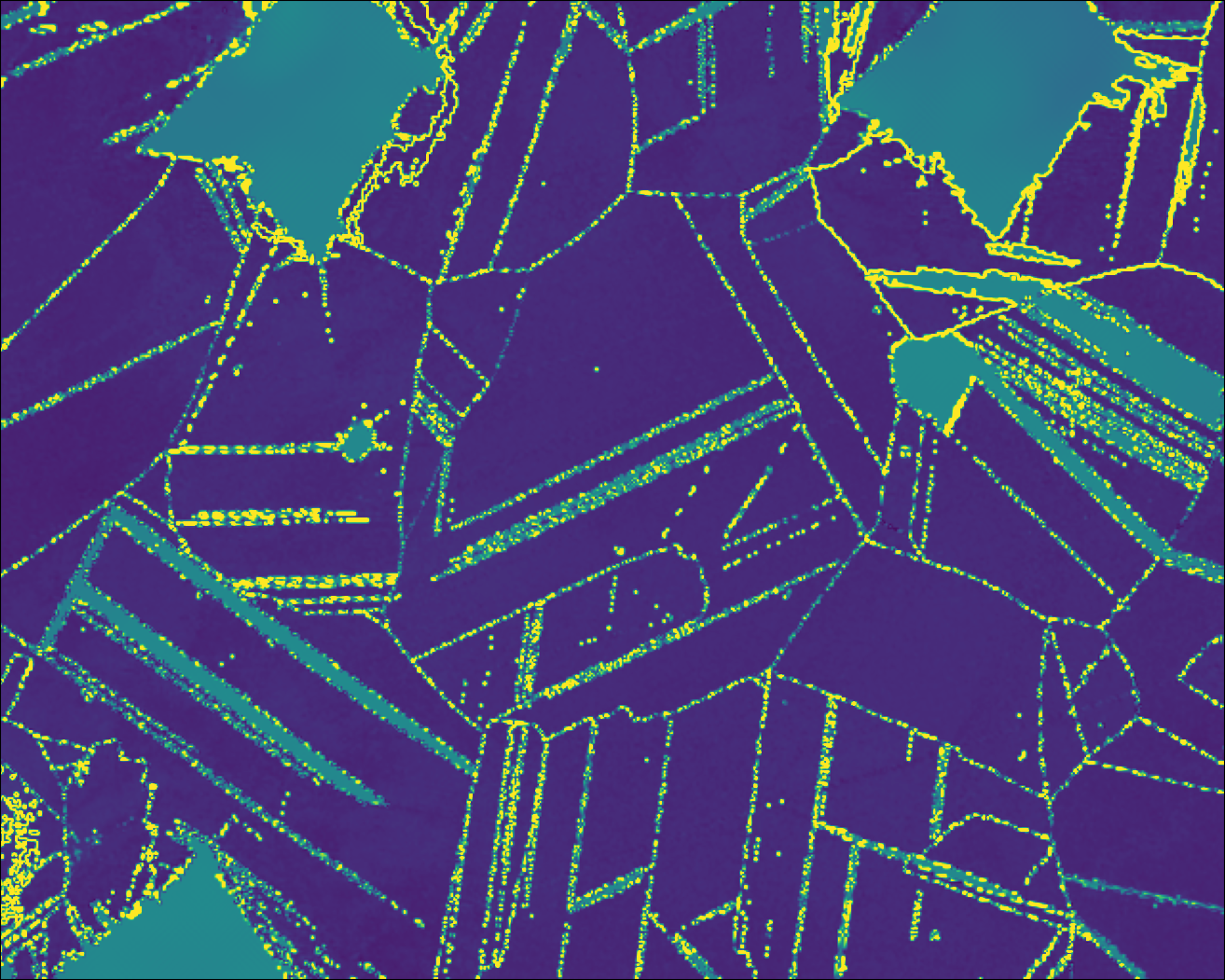}
    \includegraphics[width=0.058\textwidth]{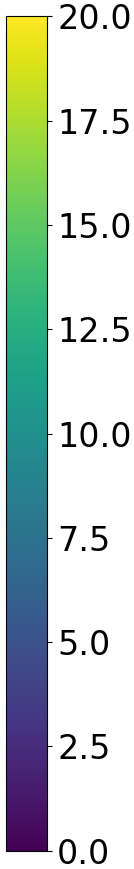}
    \includegraphics[width=0.45\textwidth]{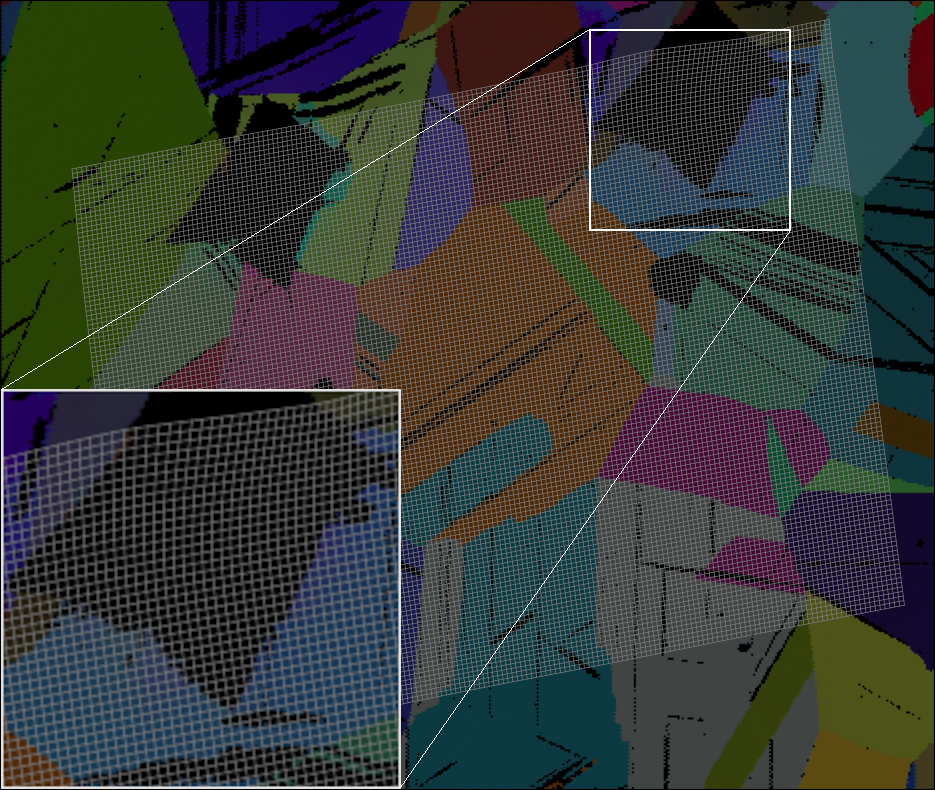}
    \includegraphics[width=0.475\textwidth]{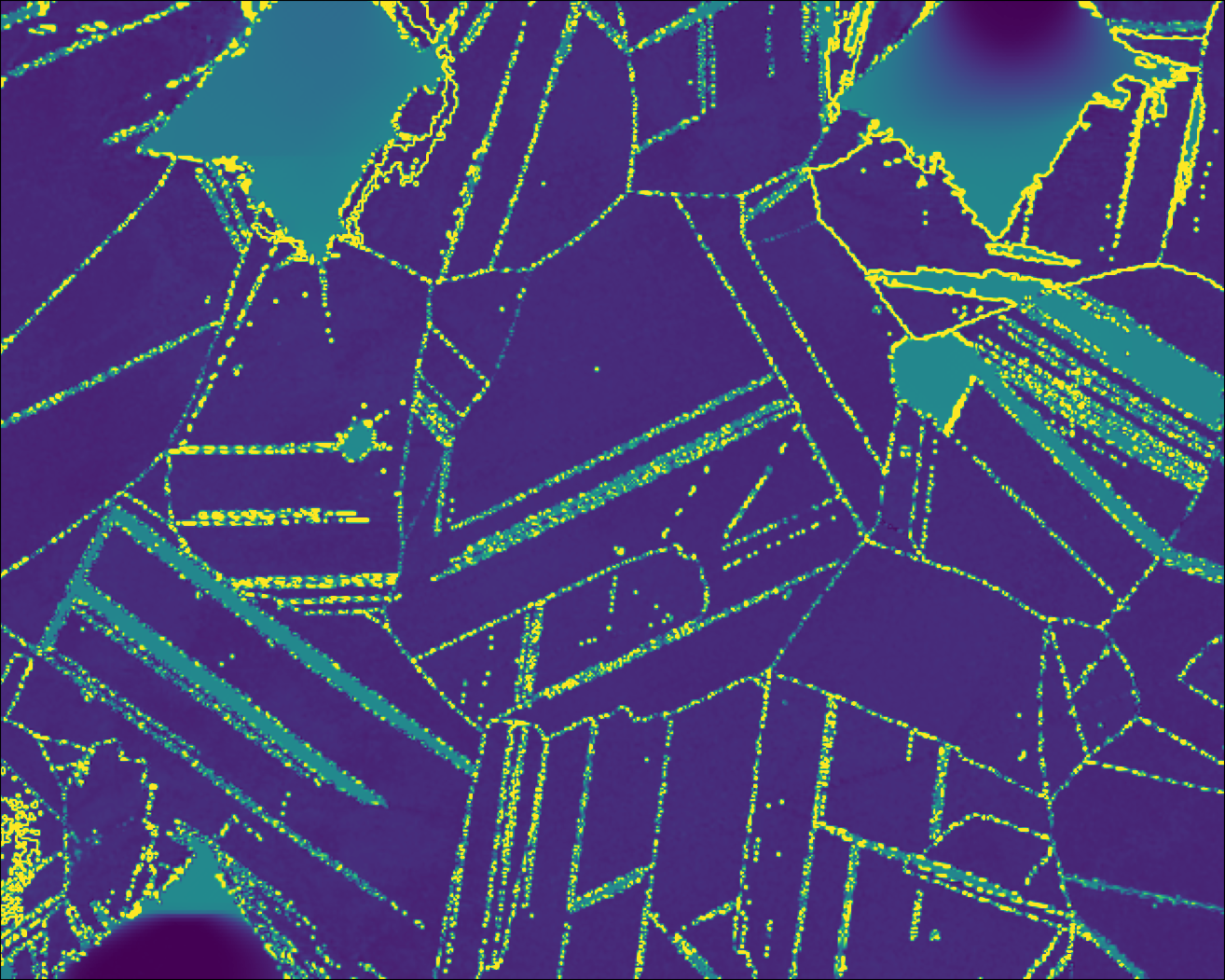}
    \includegraphics[width=0.058\textwidth]{images/colorbar_0_20.png}
    \includegraphics[width=0.45\textwidth]{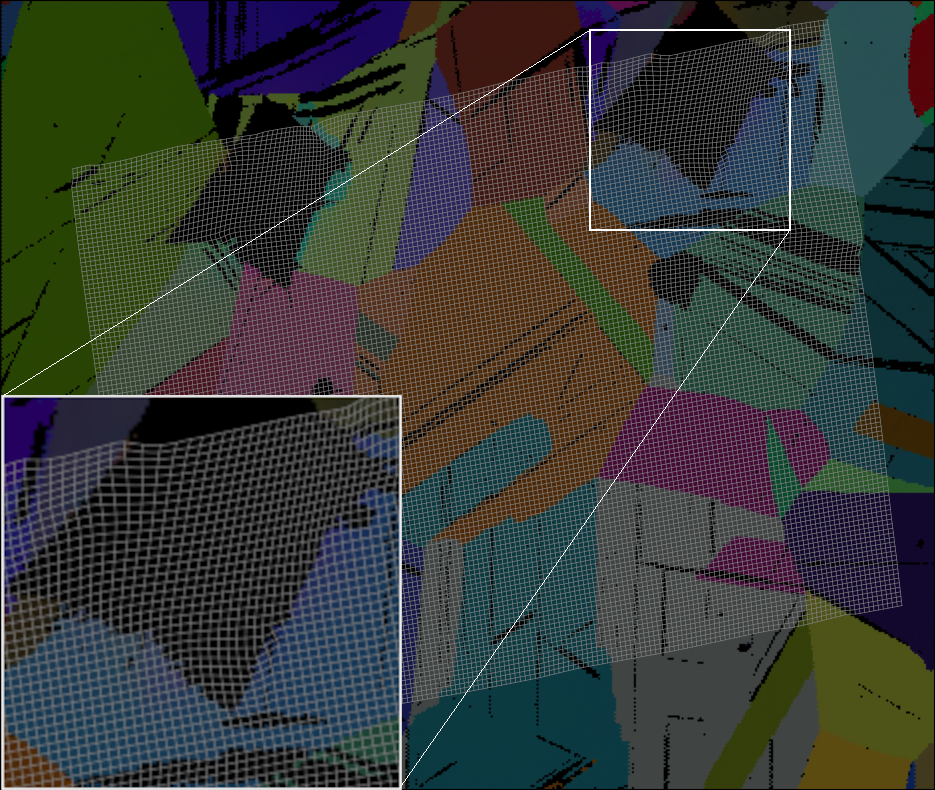}
    \includegraphics[width=0.475\textwidth]{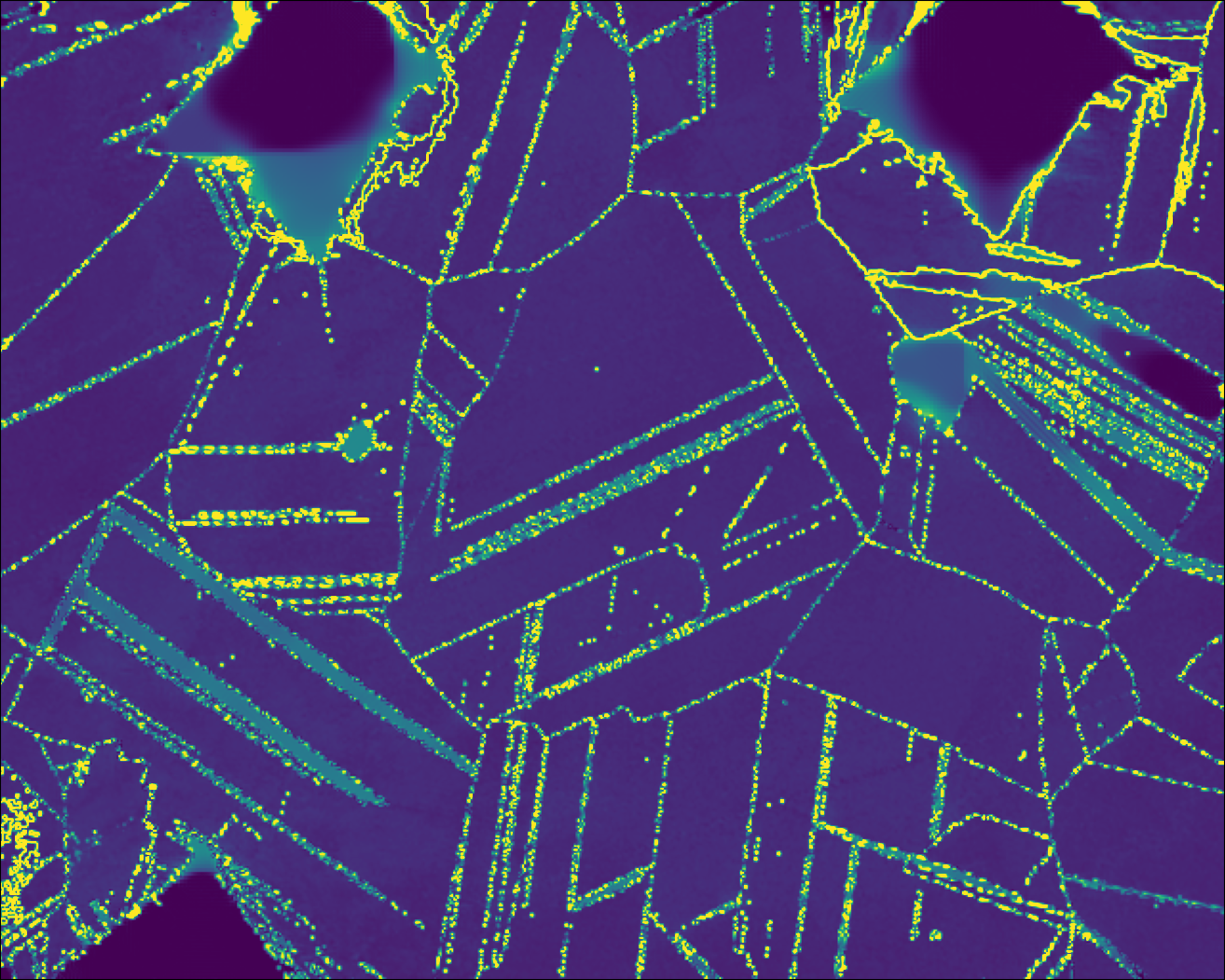}
    \includegraphics[width=0.058\textwidth]{images/colorbar_0_20.png}
    \end{center}
    \caption{
    Real EBSD data of TRIP steel X20MnAlSi16-1-1 measured in two different positions using the austenitic phase. The remaining phases and the indenter imprints are filled by a constant orientation (black regions). Left column: Reference image $I_2$ overlaid with the reconstructed transformation (gray grid) for decreasing regularization parameters $\alpha = 10, 1, 0.1$ (from top to bottom). For every image the part in the upper right is magnified and depicted in the corresponding lower left corner. Right column: The corresponding reconstruction errors in degrees.}
    \label{fig:rotatedSample}
\end{figure}

\subsection{Reconstruction of Simulated Deformations} \label{sec:simulated_deformation}
Finally, we reconstruct the displacement field obtained from a stress simulation of ice crystals
using our $\TV^2$-model. 
More precisely, we are given an initial image $I_1$ 
and 
a displacement field $u$, which has been computed by a physical model, see \cite{LeRo20,LlGrBoRoLeEvWe16}. 
Then, image $I_2$ is obtained by applying the transformation $\varphi(x) = x + u(x)$ to image $I_1$. In the simulation model, the displacement field and the image $I_1$ are continued periodically along the boundary. 
Hence, we need to repeat the image $I_2$ along each direction to account for the periodization. 
The symmetry group of the EBSD data is '6/mmm'.

The input image $I_1$ and the transformed image $I_2$ are depicted in the top row of  Figure~\ref{fig:iceImages}. 
In the bottom row of Figure~\ref{fig:iceImages}, 
we show the result of our method in comparison to  the ground truth given by the transformation $\varphi$.
There is a very good alignment between the given and the reconstructed transformation.

\begin{figure}
  \begin{center}
    \includegraphics[width=0.49\textwidth]{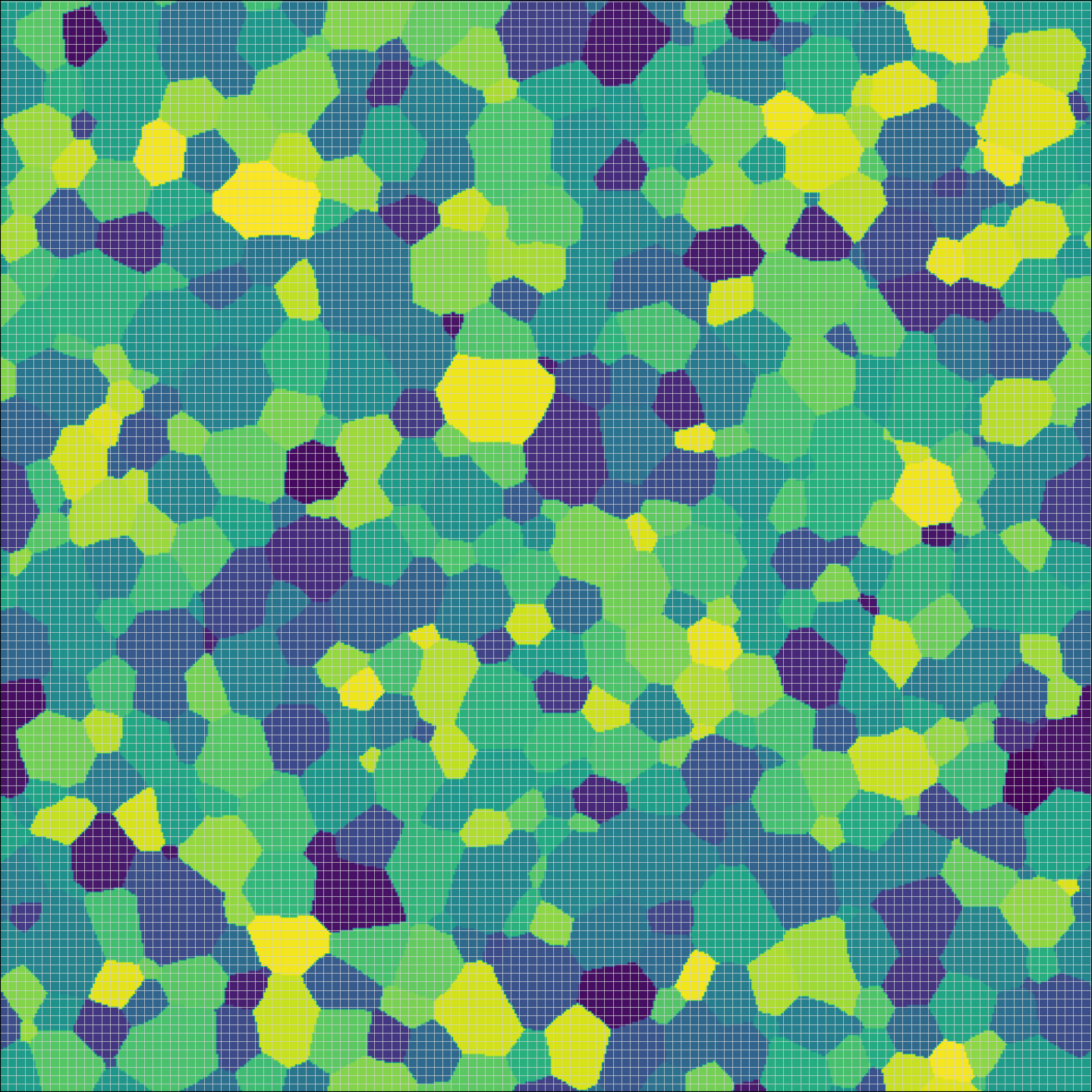}
    \includegraphics[width=0.49\textwidth]{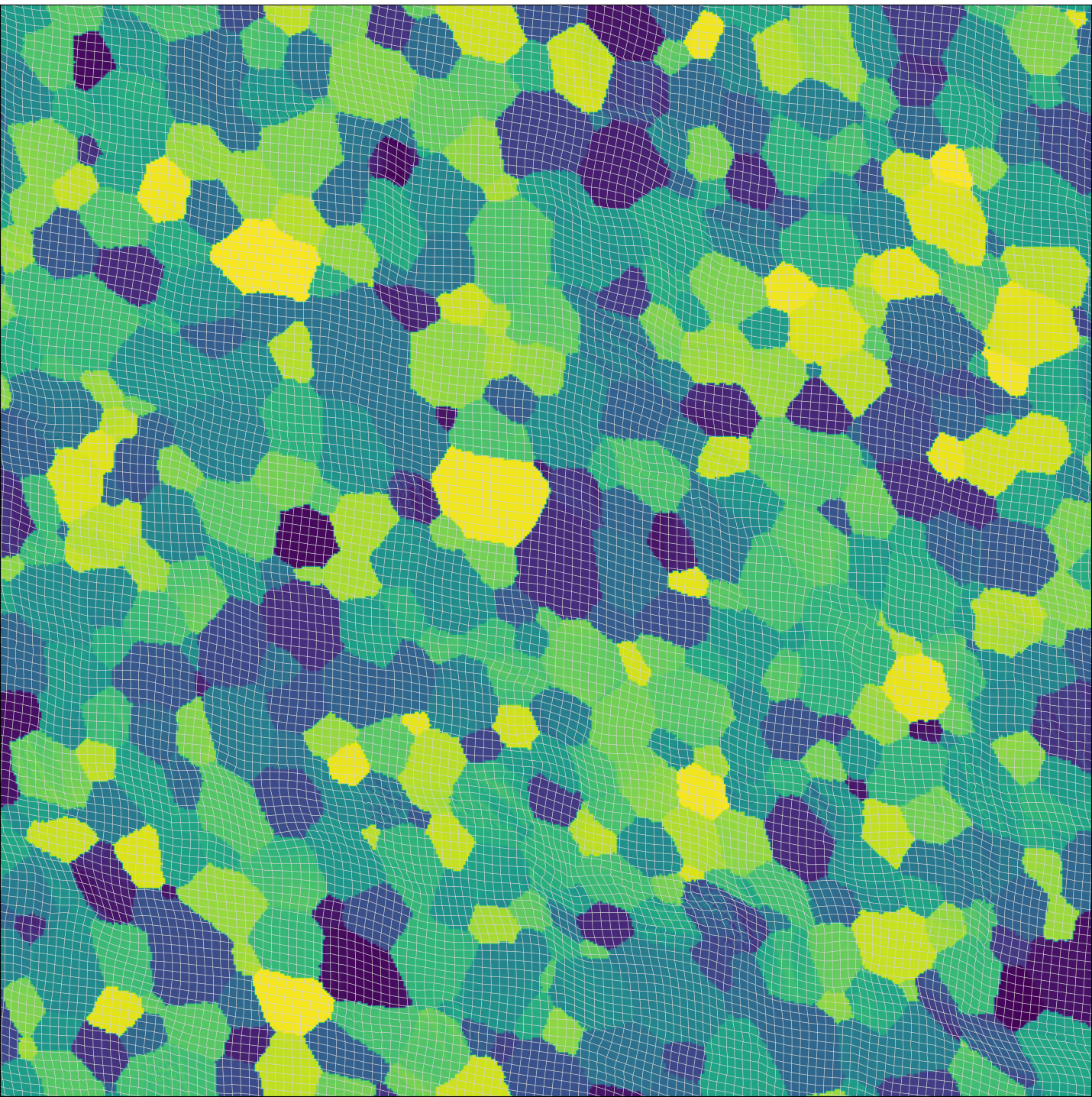}      
    \includegraphics[width=0.49\textwidth]{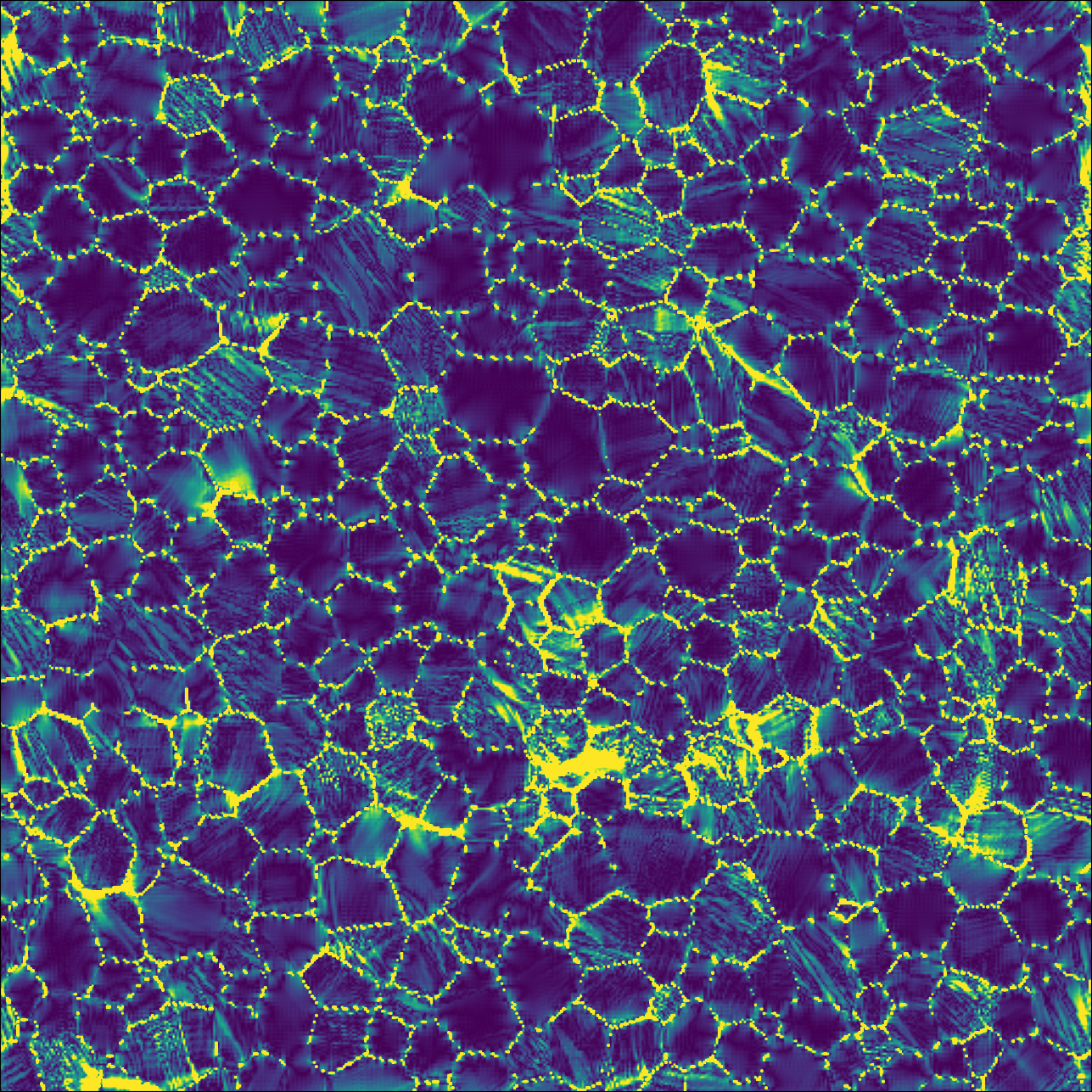}
    \includegraphics[width=0.059\textwidth]{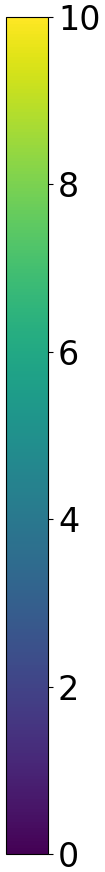}
    \includegraphics[width=0.42\textwidth]{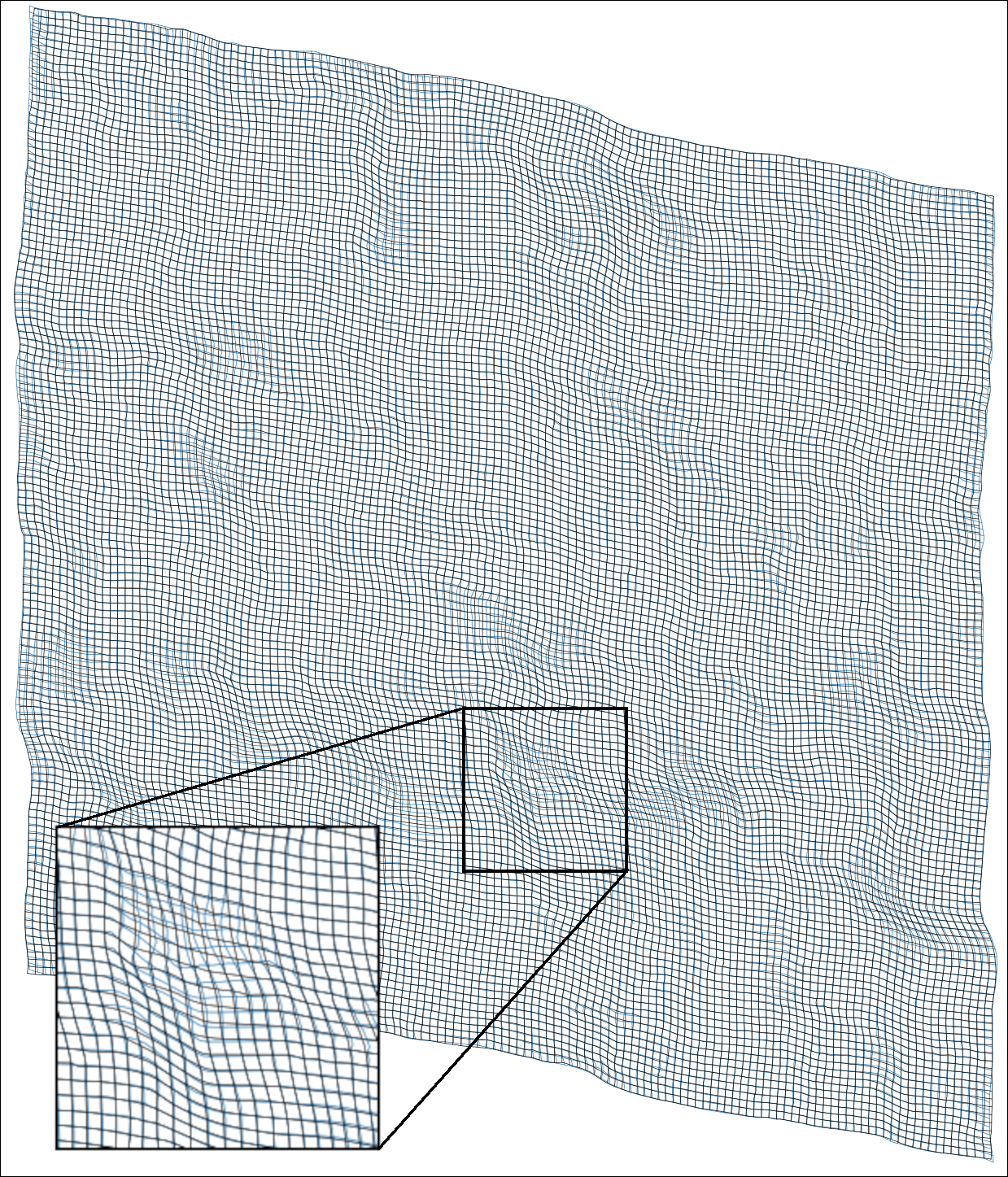}
  \end{center}
  \caption{
  Simulated EBSD data from a stress simulation of ice crystals.
  Top row: Left image $I_1$ is transformed into the right image $I_2$ by shear stress using periodic boundary conditions.  Left border is fixed and right border slips downwards. The corresponding grids are depicted in gray. Bottom left: The reconstruction error in degrees. Bottom right: The original transformation (blue grid) and the reconstructed transformation (black grid). The part in the lower middle is magnified and shows a significant difference to the ground truth.}
  \label{fig:iceImages}
\end{figure}

\section{Conclusions} \label{sec:conclusions}
Nowadays, novel image acquisition techniques such as EBSD come along with challenging tasks in imaging sciences.
In this paper, we were concerned with the appropriate modeling of displacement fields between EBSD image sequences.
In contrast to the gray-value constancy assumption, it appears that the rotation part in the transformation influences the data itself and must be incorporated into the variational model.
Hence, we established a novel continuous non-convex model and took  care in describing its discretization and minimization to make the numerical part understandable and reproducible.

In our future work, we will further refine the methods initialized in this paper.
From a practical perspective, we will be mainly interested in crack detection in fatigue tests and the investigation of real-world materials.
Integrating mathematical models in these investigations could lead to a better understanding of material behavior under mechanical loadings.
In particular, high performance materials such as TRIP steel or anisotropic nickel based superalloys can have a complex microstructure and texture and hardly predictable fatigue failure \cite{Engel.2018,Shan.2008}.
For such materials, EBSD-data is also used to perform PDE-simulations of the deformation behavior of real microstrucures and textures in order to predict the crack initiation and therefore to enhance the performance of components and to save resources.
As the boundary conditions and the mesh generation for real data are challenging and potentially faulty, image registration methods may be a powerful method to verify the simulations by comparing simulated (PDE) 
and reconstructed (image registration) local deformation.
We will also have a look at other deformation models, e.g., metamorphosis related ones.
Finally, we intend to combine image sequences from EBSD and raster electron microscopy
to improve the transformation detection.

\paragraph{Acknowledgement.}
M.G.\ and G.S.\ gratefully acknowledge funding by the German Research Foundation (DFG) 
within the project STE 571/16-1. 
We kindly thank Maria-Gema Llorens for providing us with realistic deformation maps using the Elle Numerical Simulation Platform, Ben Britton and Ruth Birch for measuring the Zirconium data and Stefan Wolke for the EBSD measurements on TRIP steel.

\appendix
\section{Proof of Theorem \ref{thm:basic} } \label{app:A}

The proof of Theorem \ref{thm:basic} is a consequence of the following two lemmata.

\begin{lemma}\label{lem:weak*semi}
The functional $\mathcal R_{\mathrm{relax}}\colon  \BV(\Omega, \R^2) \to \R_{\geq 0} \cup\{+\infty\}$ 
is lower semi-continuous w.r.t.\ $L^1(\Omega, \R^2)$-convergence.
\end{lemma}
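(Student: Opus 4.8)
The plan is to recognize that $\mathcal R_{\mathrm{relax}}$ as defined in \eqref{eq:relax} is, by construction, the \emph{sequential relaxation} (the lower semi-continuous envelope with respect to $L^1(\Omega,\R^2)$-convergence) of the functional $\mathcal R|_{W^{1,2}(\Omega,\R^2)}$, and to exploit the general principle that such a sequential relaxation is automatically lower semi-continuous. No structural property of $\mathcal R$ itself (convexity, coercivity, the precise form of $f$ or of $\TGV_\alpha$) enters; the statement follows purely from the variational definition via a diagonal argument.

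Concretely, I would fix a sequence $\varphi_k \to \varphi$ in $L^1(\Omega,\R^2)$ and aim to show $\mathcal R_{\mathrm{relax}}(\varphi) \le \liminf_{k\to\infty}\mathcal R_{\mathrm{relax}}(\varphi_k)$. After passing to a subsequence I may assume the right-hand side is a genuine limit; if it equals $+\infty$ there is nothing to prove, so I assume it is finite. For each $k$, the definition of $\mathcal R_{\mathrm{relax}}(\varphi_k)$ as an infimum over admissible competitors yields a sequence $(\varphi_{k,n})_n \subset W^{1,2}(\Omega,\R^2)$ with $\varphi_{k,n}\to\varphi_k$ in $L^1$ as $n\to\infty$ and $\liminf_n \mathcal R(\varphi_{k,n}) \le \mathcal R_{\mathrm{relax}}(\varphi_k)+1/k$.

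The core step is the diagonal extraction. Since there are infinitely many indices $n$ with $\mathcal R(\varphi_{k,n})\le \mathcal R_{\mathrm{relax}}(\varphi_k)+2/k$, and since $\varphi_{k,n}\to\varphi_k$ in $L^1$ as $n\to\infty$, I can select among these indices one, say $n(k)$, large enough that $\|\varphi_{k,n(k)}-\varphi_k\|_1\le 1/k$. Setting $\psi_k := \varphi_{k,n(k)}\in W^{1,2}(\Omega,\R^2)$, the triangle inequality gives $\|\psi_k-\varphi\|_1\le \|\psi_k-\varphi_k\|_1+\|\varphi_k-\varphi\|_1\to 0$, so $(\psi_k)_k$ is itself an admissible competitor for $\mathcal R_{\mathrm{relax}}(\varphi)$. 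Hence
\[
\mathcal R_{\mathrm{relax}}(\varphi)\le \liminf_{k\to\infty}\mathcal R(\psi_k) \le \liminf_{k\to\infty}\bigl(\mathcal R_{\mathrm{relax}}(\varphi_k)+2/k\bigr)=\liminf_{k\to\infty}\mathcal R_{\mathrm{relax}}(\varphi_k),
\]
which is the claim.

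The only delicate point — and the one I would watch most carefully — is the simultaneous control in the diagonal step: I must choose $n(k)$ so that \emph{both} the $L^1$-distance is small \emph{and} the energy stays close to the $\liminf$ of the $k$-th recovery sequence. This is possible precisely because \textbf{$L^1$-closeness} is a tail condition (it holds for all sufficiently large $n$), whereas \textbf{energy near the $\liminf$} holds for infinitely many $n$; the intersection of a tail with an infinite set is nonempty, so a valid index $n(k)$ always exists. Everything else is routine bookkeeping with the definition \eqref{eq:relax}, and the argument never uses weak* structure, only $L^1$-convergence, which matches the statement of Lemma~\ref{lem:weak*semi}.
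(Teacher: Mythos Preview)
Your proof is correct and follows essentially the same approach as the paper's: both proceed by a diagonal argument, selecting for each $\varphi_k$ a single $W^{1,2}$-competitor $\psi_k$ that is simultaneously $L^1$-close to $\varphi_k$ and has energy within $O(1/k)$ of $\mathcal R_{\mathrm{relax}}(\varphi_k)$, then observing that $(\psi_k)_k$ is admissible for $\mathcal R_{\mathrm{relax}}(\varphi)$. Your write-up is in fact more careful than the paper's on the one nontrivial point, namely the justification that such a $\psi_k$ exists (tail set intersected with an infinite set).
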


\begin{proof}
Let $\varphi_n \to \varphi$ in $L^1(\Omega, \R^2)$.
Without loss of generality, we can assume 
$\mathcal R_{\mathrm{relax}}(\varphi_n) < \infty$ for all $n \in \N$.
By definition of 
$\mathcal R_{\mathrm{relax}}$, there exists $\tilde \varphi_n \in W^{1,2}(\Omega, \R^2)$ 
with $\Vert \varphi_n - \tilde \varphi_n \Vert_1 \leq 1/n$ 
and $\mathcal R(\tilde \varphi_n) \leq \mathcal R_{\mathrm{relax}}(\varphi_n) + 1/n $.
Incorporating $\|\tilde \varphi_n - \varphi\|_1 \le \| \tilde \varphi_n - \varphi_n \|_1 + \|\varphi_n - \varphi\|_1$, 
we obtain $\tilde \varphi_n \to \varphi$ in $L^1(\Omega, \mathbb R^2)$.
Hence, using the definition of $\mathcal R_{\mathrm{relax}}$, we get
\[
\mathcal R_{\mathrm{relax}}(\varphi) 
\leq 
\liminf_{n \to \infty} \mathcal R(\tilde \varphi_n) 
\leq 
\liminf_{n \to \infty} \mathcal R_{\mathrm{relax}}(\varphi_n) + 1/n
= 
\liminf_{n \to \infty} \mathcal R_{\mathrm{relax}}(\varphi_n).
\]
This concludes the proof.  
\end{proof}

Next, we want to show for specific choices of $f$ that 
$\mathcal R_{\mathrm{relax}}(\varphi) \geq \mathcal R(\varphi)$ with equality if $\varphi \in W^{1,2}(\Omega, \R^2)$.
This relation is actually crucial for obtaining coercivity of $\mathcal R_{\mathrm{relax}}$
w.r.t.\ the $\BV$-norm, 
which is one of the main ingredients for applying the direct method of calculus to \eqref{eq:var_problem_orig}.
Here, the following result turns out to be useful.

\begin{theorem}[{\cite[Thm.~3.1]{CD94}}]\label{thm:conv}
  Let $f \colon \R \to \R_{\geq 0} \cup \{+\infty\}$ be convex and lower semi-continuous with $f(0) < \infty$.
  Assume that $\varphi \in \BV(\Omega, \R^2)$ and $\varphi_n \in W^{1,2}(\Omega, \R^2)$ satisfy
  \begin{itemize}
      \item[i)] $(\varphi_n)_n$ is bounded in $W^{1,1}(\Omega, \R^2)$,
      \item[ii)] $\varphi_n \to \varphi$ in $L^{1}(\Omega, \R^2)$
  \end{itemize}
  Then, $\int_\Omega f(\det \nabla \varphi ) \dx x \leq  \liminf_{n \to \infty} \int_\Omega f(\det \nabla \varphi_n ) \dx x$. 
\end{theorem}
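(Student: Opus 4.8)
The plan is to combine a reduction of $f$ to its Lipschitz convex minorants, the null-Lagrangian structure of the two-dimensional determinant, and the classical weak-$L^1$ lower semicontinuity theorem for convex integrands, with the subcritical integrability handled by a truncation argument. First I would reduce to the case that $f$ has linear growth. Setting $f_k(t) \coloneqq \inf_{s \in \R}(f(s) + k\,|t-s|)$ for the inf-convolutions, each $f_k$ is convex, $k$-Lipschitz, nonnegative (as $f \ge 0$), and $f_k \uparrow f$ pointwise. By monotone convergence $\int_\Omega f_k(\det\nabla\varphi)\dx x \uparrow \int_\Omega f(\det\nabla\varphi)\dx x$, while $f_k \le f$ gives $\int_\Omega f_k(\det\nabla\varphi_n)\dx x \le \int_\Omega f(\det\nabla\varphi_n)\dx x$; hence it suffices to prove the inequality for each $f_k$, and I may assume $f$ convex, nonnegative and globally Lipschitz with $f(t) \le f(0) + L\,|t|$.

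Second, I would use that in two dimensions the determinant is a null Lagrangian. For smooth maps $\det\nabla\varphi = \partial_1(\varphi_1\,\partial_2\varphi_2) - \partial_2(\varphi_1\,\partial_1\varphi_2)$, which defines the distributional determinant through $-\int_\Omega \varphi_1(\partial_2\varphi_2\,\partial_1\zeta - \partial_1\varphi_2\,\partial_2\zeta)\dx x$ for $\zeta \in C_c^\infty(\Omega)$. The strong convergence $\varphi_n \to \varphi$ in $L^1$ together with the $W^{1,1}$-bound on $(\varphi_n)_n$ makes this bilinear pairing stable, so that the distributional determinants of $\varphi_n$ converge to that of $\varphi$. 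Were the pointwise determinants $\det\nabla\varphi_n$ equiintegrable, they would then converge weakly in $L^1$ to $\det\nabla\varphi$, and the claim would follow immediately from the De Giorgi--Ioffe theorem on weak-$L^1$ lower semicontinuity of convex integrands bounded from below.

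The main obstacle is precisely that this equiintegrability can fail: only a $W^{1,1}$-bound is available, below the exponent $p = 2 = \dim\Omega$ at which the Jacobian is naturally controlled, so the masses $|\det\nabla\varphi_n|$ may concentrate, $\det\nabla\varphi_n$ need not converge weakly in $L^1$, and the approximate determinant of the limit may differ from its distributional determinant by a singular defect measure. To handle this I would truncate the gradient at a level $R$ so that the associated minors become equiintegrable, apply the convex lower semicontinuity theorem to the truncated determinants (which do converge weakly in $L^1$ to $\det\nabla\varphi$), and let $R \to \infty$. The decisive point, where convexity together with $f \ge 0$ and the linear growth enter, is that Jacobian mass lost to concentration can only \emph{raise} $\liminf_n \int_\Omega f(\det\nabla\varphi_n)\dx x$, never lower it, so the truncation error never works against us. Combining these estimates and taking the supremum over $k$ from the first step yields $\int_\Omega f(\det\nabla\varphi)\dx x \le \liminf_n \int_\Omega f(\det\nabla\varphi_n)\dx x$.
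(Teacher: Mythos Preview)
The paper does not prove this theorem; it is quoted from an external reference (Celada--Dal Maso) and used as a black box in the proof of Lemma~\ref{lem:xxx}. So there is no paper proof to compare against, and your proposal must be assessed on its own merits.

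Your reduction to globally Lipschitz $f$ via inf-convolution is sound. The difficulty lies in the next two steps, where the outline is too sketchy at precisely the critical points. First, the claim that ``$L^1$ convergence together with the $W^{1,1}$-bound makes this bilinear pairing stable'' is not justified: in two dimensions $W^{1,1}\hookrightarrow L^2$ is the borderline (non-compact) embedding, so $\varphi_{n,1}$ is only bounded in $L^2$ while $\partial_j\varphi_{n,2}$ is only bounded in $L^1$, and their product is not controlled in $L^1$; for the BV limit the expression $\varphi_1\,\partial_j\varphi_2$ does not even make sense as written, since $D\varphi_2$ is a measure. Second --- and this is the real gap --- you assert parenthetically that the truncated determinants ``do converge weakly in $L^1$ to $\det\nabla\varphi$'', but this is exactly what has to be proved. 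A pointwise truncation of $\nabla\varphi_n$ destroys the gradient structure, so the null-Lagrangian identity is lost; a Lipschitz truncation \`a la Zhang preserves it, but then for fixed $R$ the truncated maps $\psi_n^R$ need not converge to $\varphi$ (one only gets $\limsup_n\|\psi_n^R-\varphi\|_1 \le C R^{-1/2}$), and a diagonal choice $R=R_n\to\infty$ destroys equiintegrability of the Jacobians. Identifying the limit of the Jacobians with the \emph{pointwise} determinant $\det\nabla\varphi$ of the absolutely continuous part of $D\varphi$, rather than with a distributional object carrying a singular part, is the heart of the Celada--Dal Maso result and is not addressed in your sketch. Proofs of this type in the literature typically proceed via blow-up around Lebesgue points of $\nabla\varphi$, reducing to comparison with an affine map and thereby avoiding any need to make global sense of the distributional Jacobian of a BV map.
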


Now, we can prove the desired estimate.

\begin{lemma} \label{lem:xxx}
Let $\mathcal R$ and $\mathcal R_{\mathrm{relax}}$ be defined by \eqref{eq:orig} and \eqref{eq:relax}, respectively. 
Then, $\mathcal R_{\mathrm{relax}}(\varphi) \geq \mathcal R(\varphi)$ 
for any $\varphi \in \BV(\Omega, \R^2)$ 
with equality if 
$\varphi \in W^{1,2}(\Omega, \R^2)$.
\end{lemma}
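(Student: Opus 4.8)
The plan is to prove the two inequalities separately: the easy bound $\mathcal R_{\mathrm{relax}}(\varphi) \le \mathcal R(\varphi)$ for $\varphi \in W^{1,2}(\Omega,\R^2)$, and the reverse bound $\mathcal R_{\mathrm{relax}}(\varphi) \ge \mathcal R(\varphi)$ for every $\varphi \in \BV(\Omega,\R^2)$; together they give both assertions. For the easy direction I would simply test the relaxation \eqref{eq:relax} with the constant sequence $\varphi_n \equiv \varphi$, which is admissible precisely because $\varphi \in W^{1,2}(\Omega,\R^2)$, yielding $\mathcal R_{\mathrm{relax}}(\varphi) \le \liminf_n \mathcal R(\varphi_n) = \mathcal R(\varphi)$. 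Combined with the general lower bound, this forces equality on $W^{1,2}(\Omega,\R^2)$.

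For the lower bound I would fix $\varphi \in \BV(\Omega,\R^2)$ and an arbitrary admissible sequence $\varphi_n \in W^{1,2}(\Omega,\R^2)$ with $\varphi_n \to \varphi$ in $L^1$, assuming w.l.o.g.\ that $\liminf_n \mathcal R(\varphi_n) < \infty$ and, after passing to a subsequence, that $\mathcal R(\varphi_n)$ is uniformly bounded. The $\TGV_\alpha$-part is immediate from its $L^1$-lower semicontinuity (from \cite[Proof of Prop.~3.5]{BKP10}), giving $\TGV_\alpha(\varphi - \mathrm{Id}) \le \liminf_n \TGV_\alpha(\varphi_n - \mathrm{Id})$. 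To prepare the determinant term I would first verify the hypotheses of Theorem~\ref{thm:conv}: applying \cite[Cor.~3.13]{BH14} to $\varphi_n - \mathrm{Id}$, the uniform bound on $\TGV_\alpha(\varphi_n - \mathrm{Id})$ and on $\|\varphi_n\|_1$ controls $\TV(\varphi_n - \mathrm{Id})$, and hence $\TV(\varphi_n) = \|\nabla\varphi_n\|_1 \le \TV(\varphi_n - \mathrm{Id}) + \TV(\mathrm{Id})$; since $\TV(\mathrm{Id})<\infty$ on the bounded Lipschitz domain, $(\varphi_n)_n$ is bounded in $W^{1,1}(\Omega,\R^2)$.

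The main obstacle is the determinant integrand: Theorem~\ref{thm:conv} requires a convex lower semicontinuous $f$ with $f(0)<\infty$, whereas our $f$ in \eqref{def:f} has $f(0)=+\infty$, so it cannot be invoked directly. I would resolve this by approximating $f$ from below by its tangent-line extensions. For $\delta_k \downarrow 0$ with $\delta_k \in (0,1)$, let $f_k$ agree with $f$ on $[\delta_k,\infty)$ and equal the affine function $f(\delta_k)+f'(\delta_k)(\,\cdot\,-\delta_k)$ on $(-\infty,\delta_k)$. Convexity of $f$ on $(0,\infty)$ makes each $f_k$ convex and continuous; since $f\ge 2$ and $f'(\delta_k)=1-\delta_k^{-2}\le 0$, each $f_k$ is nonnegative with $f_k(0)=2/\delta_k<\infty$, and a short comparison of tangent lines shows $f_k \nearrow f$ pointwise on $\R$. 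Applying Theorem~\ref{thm:conv} to each $f_k$ and using $f_k \le f$ then gives
\[
  \int_\Omega f_k(\det\nabla\varphi)\dx x \le \liminf_n \int_\Omega f_k(\det\nabla\varphi_n)\dx x \le \liminf_n \int_\Omega f(\det\nabla\varphi_n)\dx x.
\]
Letting $k\to\infty$ and invoking the monotone convergence theorem on the left (legitimate since $0 \le f_k \nearrow f$) yields $\int_\Omega f(\det\nabla\varphi)\dx x \le \liminf_n \int_\Omega f(\det\nabla\varphi_n)\dx x$. Adding this to the $\TGV_\alpha$-estimate and taking the infimum over admissible sequences produces $\mathcal R(\varphi) \le \mathcal R_{\mathrm{relax}}(\varphi)$, which finishes the argument.
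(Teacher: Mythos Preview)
Your proof is correct and follows essentially the same strategy as the paper: both handle the obstacle $f(0)=+\infty$ by approximating $f$ from below by a monotone family of convex functions meeting the hypotheses of Theorem~\ref{thm:conv}, pass to the limit via monotone convergence, and combine this with the $L^1$-lower semicontinuity of $\TGV_\alpha$ and the $W^{1,1}$-bound coming from \cite[Cor.~3.13]{BH14}. The only cosmetic difference is the choice of approximants---the paper uses $f_\eps(x)\coloneqq(x+\eps)^{-1}+x$ for $x\ge 0$ (and $+\infty$ else) in place of your tangent-line extensions.
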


\begin{proof}
First, note that Theorem~\ref{thm:conv} still holds for our choice of $f$.
This can be seen by using monotone increasing approximations 
$f_\eps \colon \R \to \R_{\geq 0} \cup \{+\infty\}$ with 
$f_\eps(x) \coloneqq (x+\eps)^{-1} + x$ if $x\geq 0$ and 
$f_\eps(x) \coloneqq +\infty$ else.
For any sequence $(\varphi_n)_n$ satisfying the conditions of Theorem~\ref{thm:conv}, 
the monotone convergence theorem implies 
\begin{align}
    \int_\Omega f(\det \nabla \varphi ) \dx x 
		&= \lim_{\eps \to 0} \int_\Omega f_\eps(\det \nabla \varphi ) \dx x\notag\\
    &\leq \lim_{\eps \to 0} \liminf_{n \to \infty} \int_\Omega f_\eps(\det \nabla \varphi_n ) \dx x \leq \liminf_{n \to \infty} \int_\Omega f(\det \nabla \varphi_n) \dx x.\label{eq:lsc_one_over_x}
\end{align}
Let $\varphi \in \BV(\Omega, \R^2)$ be arbitrary fixed.
Clearly, we can assume 
$\mathcal R_{\mathrm{relax}}(\varphi)<\infty$, 
otherwise the statement is clear.
Hence, for every $\epsilon \geq 0$,
there exists a sequence $(\varphi_n)_n$ in $W^{1,2}(\Omega, \R^2)$ with $\varphi_n \to \varphi$ in $L^1(\Omega, \R^2)$ and
\[
\limsup_{n \to \infty} \TGV_\alpha(\varphi_n - \mathrm{Id}) 
\le \lim_{n \to \infty} \mathcal R(\varphi_n) 
\leq \mathcal R_{\mathrm{relax}}(\varphi)+ \eps.
\]
As $\TV$ can be upper bounded by $\TGV_\alpha$, see \cite[Cor.~3.13]{BH14}, 
this directly implies that the sequence $(\varphi_n)_n$ 
is bounded in $W^{1,1}(\Omega, \R^2)$.
Using observation \eqref{eq:lsc_one_over_x}, 
i.e., the generalization of Theorem~\ref{thm:conv}, 
and the lower semi-continuity of $\TGV_\alpha$ w.r.t.\ $L^1(\Omega, \mathbb R^2)$-convergence, 
see \cite[Proof of Prop.~3.5]{BKP10}, we get
\[
\begin{aligned}
\mathcal R_{\mathrm{relax}}(\varphi) 
+ \varepsilon \ge  \lim_{n \to \infty} \mathcal R(\varphi_n)
& = \lim_{k\to \infty} \TGV_{\alpha} (\varphi_n - \mathrm{Id}) 
+ \beta \int_{\Omega} f(\det \nabla \varphi_n) \dx x  \\
& \ge \liminf_{k\to \infty} \TGV_{\alpha} (\varphi_n - \mathrm{Id}) 
+ \beta \liminf_{k\to \infty} \int_{\Omega} f(\det \nabla \varphi_n) \dx x \\
& \ge \TGV_{\alpha} (\varphi - \mathrm{Id}) 
+ \beta \int_{\Omega} f(\det \nabla \varphi) \dx x \\
& = \mathcal R(\varphi). 
\end{aligned}
\]
Since $\varepsilon > 0$ was arbitrary, we arrive at $\mathcal R_{\mathrm{relax}}(\varphi) \ge \mathcal R(\varphi)$.
Finally, equality for $\varphi \in W^{1,2}(\Omega, \R^2)$ follows directly 
by choosing 
the constant sequence $\varphi_n \coloneqq \varphi$ 
in the definition of $\mathcal R_{\mathrm{relax}}$.
\end{proof}

Based on the previous lemmata, we can establish the proof using the direct method of calculus.
\\[1ex]
\emph{Proof of Theorem \ref{thm:basic}.}
Let $\varphi_n \in \BV(\Omega, \overline{\Omega}_1)$ be a minimizing sequence.
Then, it holds $\sup_{n}\|\varphi_n \|_1 < \infty$ as $\varphi_n(\Omega) \subset \overline{\Omega}_1$ 
is bounded.
Moreover, \cite[Cor.~3.13]{BH14}, $\mathcal R_{\mathrm{relax}}(\varphi) \ge \mathcal R(\varphi)$ and the triangle inequality imply for any 
$\varphi \in \BV(\Omega, \overline{\Omega}_1)$ that
\begin{align}
    \TV(\varphi) \le C \bigl(\Vert \varphi \Vert_1 + \TGV_\alpha(\varphi)\bigr) \leq C \mathcal R_{\mathrm{relax}}(\varphi) + C.
\end{align}
Hence, we get
\[\sup_{n}\|\varphi_n \|_1 + \TV (\varphi_n) < \infty,\]
and there exists a subsequence converging weakly* to some $\varphi$ in $\BV(\Omega, \overline{\Omega}_1)$.
As weak* convergence implies $L^1$ convergence, Lemma~\ref{lem:weak*semi} implies that the regularizer is weakly* lower semi-continuous.
Hence, the complete functional \eqref{eq:var_problem_orig} is weak* lower semi-continuous and the result follows.

\section{Algorithms } \label{app:B}

\begin{algorithm}[t]
	\caption{\textbf{$u_s$-Minimization}}
	\begin{algorithmic}
		\State \textbf{Parameters:} maximal iterations $r_{\max} \in \mathbb N$, 
		weights $\alpha_1, \alpha_2, \beta, \mu$
		\State \textbf{Input:} initial guess $u_s^{0}$, 
		primal variables $z_s, w_s, \omega_s$, dual variables $\lambda_h,\lambda_g$
    \For{$r \coloneqq 0,\dots,r_{\max}-1$}
      \For{$ l_1 \coloneqq 0,1$}
        \For{$ l_2 \coloneqq 0,1$}
          \State $l \coloneqq 2l_1 + l_2$
          \State $g_s^{r+\frac{l}{4}} \nabla_{u_s} L_\mu(u_s^{r+\frac{l}{4}}, w_s, z_s, \omega_s, \lambda_h, \lambda_g)$
          \For{$ i\coloneqq0, \dots, a/s$}
            \For{$ j\coloneqq0, \dots, b/s$}    
          \State $
            d_{s,i,j}^{r+\frac{l}{4}} \coloneqq 
            \begin{cases}
              - g_{s,i,j}^{r+\frac{l}{4}}, & i=2s + l_1,\; j=2s + l_2,\; \\
              0, &
            \end{cases}$        
            \State $u_{s,i,j}^{r+\frac{l+1}{4}} \coloneqq u_{s,i,j}^{r+\frac{l}{4}} + \tau_{i,j} d_{s,i,j}^{r+\frac{l}{4}}$ \qquad(determine $\tau_{i,j}$ by Algorithm~\ref{alg:LineSearch})
            \EndFor
          \EndFor
        \EndFor
      \EndFor
    \EndFor
		\State \textbf{Output:} approximate minimizer $u_s^{r_{\max}}$
	\end{algorithmic}
	\label{alg:Umin}
\end{algorithm}

\begin{algorithm}[t]
	\caption{\textbf{$w_s$-Minimization}}
	\begin{algorithmic}
		\State \textbf{Parameters:} maximal iterations $r_{\max} \in \mathbb N$, weights $\alpha_1, \alpha_2, \beta, \mu$
		\State \textbf{Input:} initial guess $w_s^{0}$, 
		primal variables $u_s, z_s, \omega_s$, 
		dual variables $\lambda_h,\lambda_g$, 
    \For{$r \coloneqq 0,\dots,r_{\max}-1$}
      \For{$ p \coloneqq 0,1$}
        \State $g_s^{r+\frac{p}{2}} \coloneqq \nabla_{w_s} L_\mu(u_s, w_s^{r+\frac{p}{2}}, z_s, \omega_s, \lambda_h,\lambda_g)$
          \For{$ i\coloneqq 0, \dots, a/s - 1$}
            \For{$ j\coloneqq 0, \dots, b/s - 1$}    
            \State $
            d_{s,i,j}^{r+\frac{p}{2}} \coloneqq 
            \begin{cases}
              - g_{s,i,j}^{r+\frac{p}{2}}, & (i+j) \mod 2 \equiv p,\\
              0, &
            \end{cases}$        
            \State $w_{s,i,j}^{r+\frac{p+1}{2}} \coloneqq w_{s,i,j}^{r+\frac{p}{2}} + \tau_{i,j} d_{s,i,j}^{r+\frac{p}{2}}$ 
						\qquad(determine $\tau_{i,j}$ by Algorithm~\ref{alg:LineSearch})
            \EndFor
          \EndFor
        \EndFor
    \EndFor
		\State \textbf{Output:} approximate minimizer $w_s^{r_{\max}}$
	\end{algorithmic}
	\label{alg:Vmin}
\end{algorithm}

\begin{algorithm}[t]
	\caption{\textbf{$z_s$-Minimization}}
	\begin{algorithmic}
		\State \textbf{Parameters:} maximal iterations $r_{\max} \in \mathbb N$, weights $\alpha_1, \alpha_2, \beta, \mu$
		\State \textbf{Input:} initial guess $z_s^{0}$, 
		primal variables $u_s, w_s, \omega_s$, 
		dual variables $\lambda_h,\lambda_g$, 
    \For{$r \coloneqq 0,\dots,r_{\max}-1$}
      \For{$ i\coloneqq0, \dots,  a/s - 1$}
        \For{$ j\coloneqq0, \dots,  b/s - 1$}
        \State $d_{s,i,j}^{r+\frac12} \coloneqq - \nabla_{z_{s,i,j}}^{\perp} L_\mu(u_s, w_s, z_s^{r}, \omega_s, \lambda_h,\lambda_g)$ \quad ``orthogonal'' $z_s$-gradient \eqref{eq:zsGradOrtho})
        \State $z_{s,i,j}^{r+\frac{1}{2}} 
				\coloneqq 
				z_{s,i,j}^{r} + \tau_{i,j} d_{s,i,j}^{r+\frac{1}{2}}$ \qquad (determine $\tau_{i,j}$ by Algorithm~\ref{alg:LineSearch})
				\State $d_{s,i,j}^{r+1} \coloneqq -  \nabla_{z_{s,i,j}}^{\parallel} 
				L_\mu(u_s, w_s, z_s^{r+\frac12}, \omega_s, \lambda_h,\lambda_g)$ 
				\quad (``parallel'' $z_s$-gradient \eqref{eq:zsGradPar})
        \State $z_{s,i,j}^{r+1} 
				\coloneqq z_{s,i,j}^{r+\frac12} + \tau_{i,j} d_{s,i,j}^{r+1}$
				\qquad(determine $\tau_{i,j}$ by Algorithm~\ref{alg:LineSearch})       
        \EndFor
      \EndFor
    \EndFor
		\State \textbf{Output:} approximate minimizer $z_s^{r_{\max}}$
	\end{algorithmic}
	\label{alg:Zmin}
\end{algorithm}

\begin{algorithm}[t]
	\caption{\textbf{Line Search - Quadratic Interpolation}}
	\begin{algorithmic}
		\State \textbf{Parameters:} maximal iterations $r_{\max} \in \mathbb N$, 
		quadratic interpolation step $h > 0$, maximal step $m>0$, 
		step size factor $0<\sigma<1$
    \State \textbf{Input:} base point $x \in \mathbb R^d$, descent direction 
		$d \in \mathbb R^d$, function $f\colon\mathbb R^d \to \mathbb R$
    \State  $a \coloneqq (f(x - h d) - 2 f(x) + f(x + h d)) / h^2$ 
    \State  $b \coloneqq (f(x - h d) - f(x + h d)) / (2h)$
    \State (interpolation coefficients of quadratic function $q(t) = \frac12 a t^2 + b t + c$)
    \If{$a \ne 0$}
    \State $\tau^0 \coloneqq \min( \frac{|b|}{|a|}, m)$ \quad (initial step size from quadratic approximation)
    \Else
      \State $\tau^0 = 0$ 
    \EndIf
    \State $r\coloneqq0$
    \While{$f(x + \tau^0 d) \ge f(x)$ \textbf{and} $r<r_{\max}$}
    \State $\tau^{r+1} \coloneqq \sigma \tau^r$
    \State $r \coloneqq r + 1$ 
    \EndWhile
    \If{$r =r_{\max}$}
      \State $\tau^r \coloneqq 0$ \quad (line search failed)
    \EndIf
    \State \textbf{Output:} step size $\tau_{r}$
	\end{algorithmic}
	\label{alg:LineSearch}
\end{algorithm}

\bibliographystyle{abbrv}
\bibliography{references,refs}

\begin{thebibliography}{10}

\bibitem{AD94}
E.~Acerbi and G.~Dal~Maso.
\newblock New lower semicontinuity results for polyconvex integrals.
\newblock {\em Calculus of Variations and Partial Differential Equations},
  2(3):329--371, 1994.

\bibitem{AdWrKu93}
B.~Adams, S.~Wright, and K.~Kunze.
\newblock Orientation imaging: The emergence of a new microscopy.
\newblock {\em Metallurgical and Materials Transactions A}, 24:819–831, 1993.

\bibitem{AG2000}
D.~C. Alexander and J.~C. Gee.
\newblock Elastic matching of diffusion tensor images.
\newblock {\em Computers in Visualization and Image Understanding},
  77:233–250, 2000.

\bibitem{APBG2001}
D.~C. Alexander, C.~Pierpaoli, P.~J. Basser, and J.~C. Gee.
\newblock Spatial transformations of diffusion tensor magnetic resonance
  images.
\newblock {\em IEEE Transactions on Medical Imaging}, 20(11):1131--1139, 2001.

\bibitem{AD92}
L.~Ambrosio and G.~Dal~Maso.
\newblock On the relaxation in {${\mathrm BV}(\Omega;{\mathbb R}^m)$} of
  quasi-convex integrals.
\newblock {\em Journal of Functional Analysis}, 109(1):76--97, 1992.

\bibitem{AFP00}
L.~Ambrosio, N.~Fusco, and D.~Pallara.
\newblock {\em Functions of Bounded Variation and Free Discontinuity Problems}.
\newblock Oxford University Press, New York, 2000.

\bibitem{AnBiMaSch08}
R.~Andreani, E.~G. Birgin, J.~M. Martínez, and M.~L. Schuverdt.
\newblock On augmented {L}agrangian methods with general lower-level
  constraints.
\newblock {\em SIAM Journal on Optimization}, 18:1286--1309, 2008.

\bibitem{MTEX}
F.~Bachmann and R.~Hielscher.
\newblock {MTEX} -- {MATLAB} toolbox for quantitative texture analysis.
\newblock \url{http://mtex-toolbox.github.io/}, 2005--2017.

\bibitem{BHS11}
F.~Bachmann, R.~Hielscher, and H.~Schaeben.
\newblock Grain detection from 2d and 3d {EBSD} data -- specification of the
  {MTEX} algorithm.
\newblock {\em Ultramicroscopy}, 111(12):1720--1733, 2011.

\bibitem{BaBeEiFiSchSt19}
F.~Balle, T.~Beck, D.~Eifler, J.~H. Fitschen, S.~Schuff, and G.~Steidl.
\newblock Strain analysis by a total generalized variation regularized optical
  flow model.
\newblock {\em Inverse Problems in Science and Engineering}, 27(4):540--564,
  2019.

\bibitem{BPS14}
F.~Becker, S.~Petra, and C.~Schn{\"o}rr.
\newblock Optical flow.
\newblock In {\em Handbook of Mathematical Methods in Imaging}, pages
  1945--2004. Springer, New York, 2015.

\bibitem{BMTY2005}
M.~F. Beg, M.~I. Miller, A.~Trouv{\'e}, and L.~Younes.
\newblock Computing large deformation metric mappings via geodesic flows of
  diffeomorphisms.
\newblock {\em International Journal of Computer Vision}, 61(2):139--157, 2005.

\bibitem{BFPS17}
R.~Bergmann, J.~H. Fitschen, J.~Persch, and G.~Steidl.
\newblock Iterative multiplicative filters for data labeling.
\newblock {\em International Journal of Computer Vision}, 123(3):123--145,
  2017.

\bibitem{BER15}
B.~Berkels, A.~Effland, and M.~Rumpf.
\newblock Time discrete geodesic paths in the space of images.
\newblock {\em SIAM Journal on Imaging Sciences}, 8(3):1457--1488, 2015.

\bibitem{BiFlMa10}
E.~G. Birgin, C.~A. Floudas, and J.~M. Martínez.
\newblock Global minimization using an augmented {L}agrangian method with
  variable lower-level constraints.
\newblock {\em Mathematical Programming}, 125:139--162, 2010.

\bibitem{BlAdAn15}
J.~Blaber, B.~Adair, and A.~Antoniou.
\newblock Ncorr: Open-source {2D} digital image correlation {Matlab} software.
\newblock {\em Experimental Mechanics}, 55(6):1105--1122, 2015.

\bibitem{BPCPE11}
S.~Boyd, N.~Parikh, E.~Chu, B.~Peleato, and J.~Eckstein.
\newblock Distributed optimization and statistical learning via the alternating
  direction method of multipliers.
\newblock {\em Foundations and Trends in Machine Learning}, 3(1):101--122,
  2011.

\bibitem{BH14}
K.~Bredies and M.~Holler.
\newblock Regularization of linear inverse problems with total generalized
  variation.
\newblock {\em Journal of Inverse and Ill-Posed Problems}, 22(6):871--913,
  2014.

\bibitem{BKP10}
K.~Bredies, K.~Kunisch, and T.~Pock.
\newblock Total generalized variation.
\newblock {\em SIAM Journal on Imaging Sciences}, 3(3):492--526, 2010.

\bibitem{Brodusch.2018}
N.~Brodusch, H.~Demers, and R.~Gauvin.
\newblock Imaging with a commercial electron backscatter diffraction (ebsd)
  camera in a scanning electron microscope: A review.
\newblock {\em Journal of Imaging}, 4(7):88, 2018.

\bibitem{BSS2016}
M.~Burger, A.~Sawatzky, and G.~Steidl.
\newblock First order algorithms in variational image processing.
\newblock In {\em Operator Splittings and Alternating Direction Methods}.
  Springer, 2016.

\bibitem{CMMWY2006}
Y.~Cao, M.~I. Miller, S.~Mori, R.~L. Winslow, and L.~Younes.
\newblock Diffeomorphic matching of diffusion tensor images.
\newblock In {\em Conference on Computer Visision and Pattern Recognition}.
  IEEE, New York, 2006.

\bibitem{CD94}
P.~Celada and G.~Dal~Maso.
\newblock Further remarks on the lower semicontinuity of polyconvex integrals.
\newblock {\em Annales de l'Institut Henri Poincar\'{e}, Analyse Non
  Lin\'{e}aire}, 11(6):661--691, 1994.

\bibitem{VEDDAC}
{Chemnitzer Werkstoffmechanik GmbH}.
\newblock {VEDDAC} -- digital image correlation software.
\newblock \url{http://www.cwm-chemnitz.de/}, 2017.

\bibitem{VIC}
{Correlated Solutions Inc.}
\newblock {VIC} {2D} -- digital image correlation software.
\newblock \url{http://correlatedsolutions.com/vic-2d/}, 2017.

\bibitem{Dep12}
G.~De~Philippis.
\newblock Weak notions of {J}acobian determinant and relaxation.
\newblock {\em ESAIM: Control, Optimisation and Calculus of Variations},
  18(1):181--207, 2012.

\bibitem{ENR2020}
A.~Effland, S.~Neumayer, and M.~Rumpf.
\newblock Convergence of the time discrete metamorphosis model on {H}adamard
  manifolds.
\newblock {\em SIAM Journal on Imaging Sciences}, 13(2):557--588, 2020.

\bibitem{Engel.2018}
B.~Engel, T.~Beck, N.~Moch, H.~Gottschalk, and S.~Schmitz.
\newblock Effect of local anisotropy on fatigue crack initiation in a coarse
  grained nickel-base superalloy.
\newblock {\em MATEC Web of Conferences}, 165:04004, 2018.

\bibitem{FM97}
I.~Fonseca and P.~Marcellini.
\newblock Relaxation of multiple integrals in subcritical {S}obolev spaces.
\newblock {\em Journal of Geometric Analysis}, 7(1):57--81, 1997.

\bibitem{FM92}
I.~Fonseca and S.~M\"{u}ller.
\newblock Quasi-convex integrands and lower semicontinuity in {$L^1$}.
\newblock {\em SIAM Journal on Mathematical Analysis}, 23(5):1081--1098, 1992.

\bibitem{Fuloria.2016}
D.~Fuloria, N.~Kumar, S.~Goel, R.~Jayaganthan, S.~Jha, and D.~Srivastava.
\newblock Tensile properties and microstructural evolution of zircaloy-4
  processed through rolling at different temperatures.
\newblock {\em Materials {\&} Design}, 103:40--51, 2016.

\bibitem{GM76}
D.~Gabay and B.~Mercier.
\newblock A dual algorithm for the solution of nonlinear variational problems
  via finite element approximations.
\newblock {\em Computer and Mathematics with Applications}, 2:17--40, 1976.

\bibitem{glowinski14}
R.~Glowinski.
\newblock On alternating direction methods of multipliers: {A} historical
  perspective.
\newblock In {\em Modeling, Simulation and Optimization for Science and
  Technology}, pages 59--82. Springer, 2014.

\bibitem{GM75}
R.~Glowinski and A.~Marroco.
\newblock Sur l'approximation, par \'el\'ements finis d'ordre un, et la
  r\'esolution, par p\'enalisation-dualit\'e d'une classe de probl\`emes de
  {D}irichlet non lin\'eaires.
\newblock {\em Revue fran\c{c}aise d'automatique, informatique, recherche
  op\'erationnelle. Analyse num\'erique}, 9(2):41--76, 1975.

\bibitem{Gr12}
M.~Gr\"af.
\newblock A unified approach to scattered data approximation on $\mathbb {S}^3$
  and {SO}(3).
\newblock {\em Advances in Computational Mathematics}, 37:379--392, 2012.

\bibitem{HS2021}
C.~Hartman, H.~A. Weiss, P.~Lechner, W.~Volk, S.~Neumayer, J.~H. Fitschen, and
  G.~Steidl.
\newblock Measurement of strain, strain rate and crack evolution in shear
  cutting.
\newblock {\em Journal of Materials Processing Technology}, 288:116872, 2021.

\bibitem{He69}
M.~R. Hestenes.
\newblock Multiplier and gradient methods.
\newblock {\em Journal of Optimization Theory and Applications}, 4:303--320,
  1969.

\bibitem{HWSSD13}
A.~Hewer, J.~Weickert, H.~Seibert, T.~Scheffer, and S.~Diebels.
\newblock Lagrangian strain tensor computation with higher order variational
  models.
\newblock In {\em British Machine Vision Conference}. BMVA Press, 2013.

\bibitem{HVW15}
M.~Hinterm\"{u}ller, T.~Valkonen, and T.~Wu.
\newblock Limiting aspects of nonconvex {${\mathrm TV}^\varphi$} models.
\newblock {\em SIAM Journal on Imaging Sciences}, 8(4):2581--2621, 2015.

\bibitem{HK14}
M.~Holler and K.~Kunisch.
\newblock On infimal convolution of {TV}-type functionals and applications to
  video and image reconstruction.
\newblock {\em SIAM Journal on Imaging Sciences}, 7(4):2258--2300, 2014.

\bibitem{HS81}
B.~K. Horn and B.~G. Schunck.
\newblock {Determining optical flow}.
\newblock {\em Artificial Intelligence}, 17(1-3):185--203, 1981.

\bibitem{St1964}
S.~J.
\newblock On the parametrization of the 3-dimensional rotation.
\newblock {\em SIAM Reviews}, 6:422–430, 1964.

\bibitem{kloeckner_pycuda_2012}
A.~{Kl{\"o}ckner}, N.~{Pinto}, Y.~{Lee}, B.~{Catanzaro}, P.~{Ivanov}, and
  A.~{Fasih}.
\newblock {PyCUDA and PyOpenCL: A Scripting-Based Approach to GPU Run-Time Code
  Generation}.
\newblock {\em Parallel Computing}, 38:157--174, 2012.

\bibitem{KR10}
J.~Kristensen and F.~Rindler.
\newblock Relaxation of signed integral functionals in {BV}.
\newblock {\em Calculus of Variations and Partial Differential Equations},
  37(1-2):29--62, 2010.

\bibitem{KuWrAdDi93}
K.~Kunze, S.~I. Wright, B.~L. Adams, and D.~J. Dingley.
\newblock Advances in automatic {EBSP} single orientation measurements.
\newblock {\em Textures and Microstructures}, 20:41--54, 1993.

\bibitem{LeRo20}
R.~A. Lebensohn and A.~D. Rollett.
\newblock Spectral methods for full-field micromechanical modelling of
  polycrystalline materials.
\newblock {\em Computational Materials Science}, 173:109336, 2020.

\bibitem{LlGrBoRoLeEvWe16}
M.-G. Llorens, A.~Griera, P.~D. Bons, J.~Roessiger, R.~Lebensohn, L.~Evans, and
  I.~Weikusat.
\newblock Dynamic recrystallisation of ice aggregates during co-axial
  viscoplastic deformation: {A} numerical approach.
\newblock {\em Journal of Glaciology}, 62:359–377, 2016.

\bibitem{CUDA08}
D.~Luebke.
\newblock {CUDA}: Scalable parallel programming for high-performance scientific
  computing.
\newblock In {\em 2008 5th IEEE International Symposium on Biomedical Imaging:
  From Nano to Macro}, pages 836--838, 2008.

\bibitem{MS06}
T.~Maitland and S.~Sitzman.
\newblock Backscattering detector and {EBSD} in nanomaterials characterization.
\newblock In {\em Scanning Microscopy for Nanotechnology}, pages 41--75.
  Springer, 2006.

\bibitem{Mod2004}
J.~Modersitzki.
\newblock {\em Numerical Methods for Image Registration}.
\newblock Oxford University Press on Demand, 2004.

\bibitem{BookMo09}
J.~Modersitzki.
\newblock {\em {FAIR}: Flexible Algorithms for Image Registration}.
\newblock SIAM, Philadelphia, 2009.

\bibitem{NPS2018}
S.~Neumayer, J.~Persch, and G.~Steidl.
\newblock Morphing of manifold-valued images inspired by discrete geodesics in
  image spaces.
\newblock {\em SIAM Journal on Imaging Sciences}, 11(3):1898--1930, 2018.

\bibitem{NoHi17}
G.~Nolze and R.~Hielscher.
\newblock Orientations -- perfectly colored.
\newblock {\em Journal of Applied Crystallography}, 49:1786--1802, 2016.

\bibitem{PNK2020}
B.~Pedretscher, M.~Nelhiebel, and B.~Kaltenbacher.
\newblock Applying a statistical model to the observed texture evolution of
  fatigued metal films.
\newblock {\em IEEE Transactions on Device and Materials Reliability},
  20(3):517--523, 2020.

\bibitem{Po72}
M.~J.~D. Powell.
\newblock A method for nonlinear constraints in minimization problems.
\newblock In {\em Optimization}, pages 283--298. Academic Press, 1972.

\bibitem{Pshenichnikov.2015}
A.~Pshenichnikov, J.~Stuckert, and M.~Walter.
\newblock Microstructure and mechanical properties of zircaloy-4 cladding
  hydrogenated at temperatures typical for loss-of-coolant accident (loca)
  conditions.
\newblock {\em Nuclear Engineering and Design}, 283:33--39, 2015.

\bibitem{RBP14}
R.~Ranftl, K.~Bredies, and T.~Pock.
\newblock Non-local total generalized variation for optical flow estimation.
\newblock In {\em European Conference on Computer Vision}, pages 439--454.
  Springer, 2014.

\bibitem{RoRo68}
J.~B. Robertson and M.~Rosenberg.
\newblock The decomposition of matrix-valued measures.
\newblock {\em Michigan Mathematical Journal}, 15(3):353--368, 1968.

\bibitem{Ro76a}
R.~T. Rockafellar.
\newblock Augmented {L}agrangians and applications of the proximal point
  algorithm in convex programming.
\newblock {\em Mathematics of Operations Research}, 1(2):97--116, 1976.

\bibitem{RW2015}
M.~Rumpf and B.~Wirth.
\newblock Variational time discretization of geodesic calculus.
\newblock {\em IMA Journal of Numererical Analysis}, 5(3):1011–1046, 2015.

\bibitem{SGGH09}
O.~Scherzer, M.~Grasmair, H.~Grossauer, M.~Haltmeier, and F.~Lenzen.
\newblock {\em Variational Methods in Imaging}, volume 167 of {\em Applied
  Mathematical Sciences}.
\newblock Springer, New York, 2009.

\bibitem{Schmidt.2018}
S.~Schmidt, M.~W. Klein, A.~Boemke, M.~Smaga, T.~Beck, and R.~M{\"u}ller.
\newblock Investigation of austenitic trip steels by means of a phase field
  model.
\newblock {\em PAMM}, 18(1), 2018.

\bibitem{SS08}
S.~Setzer and G.~Steidl.
\newblock Variational methods with higher order derivatives in image
  processing.
\newblock In {\em Approximation XII: San Antonio 2007}, pages 360--385, 2008.

\bibitem{SST11}
S.~Setzer, G.~Steidl, and T.~Teuber.
\newblock Infimal convolution regularizations with discrete \(\ell_1\)-type
  functionals.
\newblock {\em Communications in Mathematical Sciences}, 9(3):797--827, 2011.

\bibitem{Shan.2008}
T.~K. Shan, S.~H. Li, W.~G. Zhang, and Z.~G. Xu.
\newblock Prediction of martensitic transformation and deformation behavior in
  the trip steel sheet forming.
\newblock {\em Materials {\&} Design}, 29(9):1810--1816, 2008.

\bibitem{Singh.2018}
S.~Singh, Y.~Guo, B.~Winiarski, T.~L. Burnett, P.~J. Withers, and M.~de~Graef.
\newblock High resolution low kv ebsd of heavily deformed and nanocrystalline
  aluminium by dictionary-based indexing.
\newblock {\em Scientific reports}, 8(1):10991, 2018.

\bibitem{SRB13}
D.~Sun, S.~Roth, and M.~J. Black.
\newblock A quantitative analysis of current practices in optical flow
  estimation and the principles behind them.
\newblock {\em International Journal of Computer Vision}, 106(2):115--137,
  2014.

\bibitem{TPCB08}
W.~Trobin, T.~Pock, D.~Cremers, and H.~Bischof.
\newblock An unbiased second-order prior for high-accuracy motion estimation.
\newblock In {\em DAGM Pattern Recognition Symposium}, pages 396--405.
  Springer, 2008.

\bibitem{TY2005b}
A.~Trouv\'e and L.~Younes.
\newblock Metamorphoses through {L}ie group action.
\newblock {\em Foundations in Computational Mathematics}, 5(2):173--198, 2005.

\bibitem{VaBrKn13}
T.~Valkonen, K.~Bredies, and F.~Knoll.
\newblock Total generalized variation in diffusion tensor imaging.
\newblock {\em SIAM Journal on Imaging Sciences}, 6(1):487--525, 2013.

\bibitem{WaYiZe19}
Y.~Wang, W.~Yin, and J.~Zeng.
\newblock Global convergence of {ADMM} in nonconvex nonsmooth optimization.
\newblock {\em Journal of Scientific Computing}, 78:29--63, 2019.

\bibitem{WBTN2006}
J.~Weickert, A.~Bruhn, T.~Brox, and N.~Papenberg.
\newblock A survey on variational optic flow methods for small displacements.
\newblock In {\em Mathematical Models for Registration and Applications to
  Medical Imaging}, pages 103--136. Springer, 2006.

\bibitem{Wilkinson.2012}
A.~J. Wilkinson and T.~B. Britton.
\newblock Strains, planes, and {EBSD} in materials science.
\newblock {\em Materials Today}, 15(9):366--376, 2012.

\bibitem{Yeo2009}
B.~T.~T. Yeo, T.~Vercauteren, P.~Fillard, J.-M. Peyrat, X.~Pennec, N.~A.
  Polina~Golland, and O.~Clatz.
\newblock {DT-REFinD}: Diffusion tensor registration with exact finite-strain
  differential.
\newblock {\em IEEE Transactions on Medical Imaging}, 28(12), 2009.

\bibitem{younes2010shapes}
L.~Younes.
\newblock {\em Shapes and Diffeomorphisms}.
\newblock Springer, Berlin, 2010.

\bibitem{YSM07}
J.~Yuan, C.~Schn\"orr, and E.~M\'emin.
\newblock Discrete orthogonal decomposition and variational fluid flow
  estimation.
\newblock {\em Journal of Mathematical Imaging and Vision}, 28:67--80, 2007.

\bibitem{YSS07}
J.~Yuan, C.~Schn\"orr, and G.~Steidl.
\newblock Simultaneous higher order optical flow estimation and decomposition.
\newblock {\em SIAM Journal on Scientific Computing}, 29(6):2283--2304, 2007.

\end{thebibliography}

\end{document}